\documentclass[11pt,twoside]{article}  
\usepackage[utf8]{inputenc}
\usepackage[T1]{fontenc}
\usepackage{mathtools,amssymb, amsthm}            
\usepackage{amstext, amsfonts, a4}
\usepackage{color}
\usepackage{enumerate}
\usepackage{csquotes}
\usepackage{graphicx}
\usepackage{todonotes}

\setlength{\topmargin}{-1.2cm}

\newcommand{\R}{\mathbf{R}}
\renewcommand{\S}{\mathbf{S}}

\newcommand{\N}{\mathbf{N}}
\newcommand{\E}{\mathbb{E}}

\newcommand{\eps}{\epsilon}
\newcommand{\Om}{\Omega}
\newcommand{\om}{\omega}
\newcommand{\Id}{\operatorname{Id}}
\newcommand{\oo}{\infty}
\newcommand{\lt}{\left}
\newcommand{\rt}{\right}

\theoremstyle{plain}
\newtheorem{thm}{Theorem}[subsection]
\newtheorem{lem}[thm]{Lemma}
\newtheorem{prop}[thm]{Proposition}

\theoremstyle{definition}
\newtheorem{definition}{Definition}[subsection]
\theoremstyle{remark}
\newtheorem{rem}{Remark}[subsection]
\newtheorem{ex}{Example}[subsection]

\title{Stochastic homogenization of the Landau-Lifshitz-Gilbert equation}
\author{Fran\c{c}ois Alouges\footnote{CMAP, Ecole polytechnique, CNRS, Institut Polytechnique de Paris, 91128 Palaiseau Cedex, France, {\tt{francois.alouges@polytechnique.edu}}},
Anne de Bouard\footnote{CMAP, Ecole polytechnique, CNRS, Institut Polytechnique de Paris,
91128 Palaiseau Cedex, France, {\tt{anne.debouard@polytechnique.edu}}}, 
Beno\^\i t Merlet\footnote{LPP, CNRS UMR 8524, Universit\'e de Lille, F-59655 Villeneuve d'Ascq Cedex, France and Team RAPSODI, Inria Lille - Nord Europe, 40 av. Halley, F-59650 Villeneuve d'Ascq, France, {\tt{benoit.merlet@univ-lille.fr}}},
L\'ea Nicolas\footnote{CMAP, Ecole polytechnique, CNRS, Institut Polytechnique de Paris, 91128 Palaiseau Cedex, France, 
{\tt{lea.nicolas@polytechnique.edu}}}}
\date{}

\begin{document}

\maketitle

\begin{abstract}
Following the ideas of V. V. Zhikov and A. L. Piatnitski~\cite{zhikov2006homogenization}, 
and more precisely the stochastic two-scale convergence, this paper establishes a 
homogenization theorem in a stochastic setting for two nonlinear equations~: the 
equation of harmonic maps into the sphere and the Landau-Lifschitz equation. These equations have strong nonlinear features, in particular,  in general their solutions are not unique.
\end{abstract}

\section{Introduction}

Magnetic materials are nowadays at the heart of numerous electric devices
(engines, air conditioning, transportation, etc.). Very often, the
efficiency of the device is directly linked to the magnetic properties
of the magnets used \cite{inproceedings}. Best magnets are the rare-earth magnets 
(e.g. Samarium-Cobalt or Neodymium magnets) which have
been developed since the 1980's with no equivalent yet. Nevertheless, 
due to the uneven distribution of rare-earth ores~\cite{fullerton1999hard}, 
the magnet manufacturers aim at developing new types of magnets that do 
not require rare-earth. Among the best promising candidates are the so-called 
\emph{spring magnets} which are made of hard and soft magnets intimately 
mixed at the nanoscale~\cite{gutfleisch2011magnetic}. 

Mathematically speaking, the problem of studying such materials is challenging
because usual models are highly non-linear and the material dependence of 
the parameters varies at a very small scale. It is therefore a problem of homogenization.
A complete static model, including all relevant classical physical terms, has been 
derived in~\cite{alouges2015homogenization} in terms of $\Gamma-$convergence 
of the minimization problems related to the underlying so-called Brown energy of 
such materials. Nevertheless, to the knowledge of the authors, there is no such
study to characterize the dynamic problem, or, in other words, the evolution of the 
magnetization inside the composite ferromagnet. This is the first purpose of this paper,
while the second is to consider that the different compounds are randomly distributed at the
microscopic scale inside the macroscopic composite, whereas a periodic distribution was considered
in \cite{alouges2015homogenization}.

To go more precisely into the details, the magnetization inside a homogeneous 
ferromagnetic materials obeys the Landau-Lifschitz equation (see for instance~\cite{hubert2008magnetic}), 
a non-linear PDE, which, when only the exchange 
interactions between magnetic spins are taken into account reads as
\begin{equation}
    \frac{\partial u}{\partial t} = u \times \operatorname{div}\left(a\nabla u\right) - 
    \lambda u\times \left(u\times\operatorname{div}\left(a\nabla u\right)\right)\,.
\label{LL1}
\end{equation}

Here, $u(t,x)$ is a unit vector in $\R^3$ that denotes the magnetization at time  $\;$
$t\geq 0$ and at $x\in \mathcal{D}$, where $\mathcal{D}$ is the bounded domain of $\R^3$ occupied by the material,
and the symbol $\times$ stands for the cross product in $\R^3$.
The so-called \emph{exchange parameter} $a$ is a $3\times3$ matrix, which depends 
on the considered material(s). We also assume the different materials to be strongly coupled which amounts to say that the direction of the magnetization $u$ does not jump at the interface between two materials.

Spring magnets being composed of several different materials, the coefficient $a$ is 
likely to depend on the space variable. Assuming furthermore that the materials are 
randomly distributed, and on a small scale $\eps$, we are led to consider the Landau-Lifschitz equation with random coefficients
\begin{equation}
   \left\{ 
   \begin{aligned}
     &\frac{\partial u^{\eps}}{\partial t} =  u^{\eps}\times 
     \operatorname{div}(a(\frac{x}{\eps},\om)\nabla u^{\eps})  
     - \lambda u^{\eps} \times \left(u^{\eps}\times 
     (\operatorname{div}(a(\frac{x}{\eps},\om)\nabla u^{\eps})\right)\, , \\
     & u^{\eps}(x,0) = u_0(x)\,, \\
     & |u^{\eps}(x,t)| = 1 \text{ for a.e. } (x,t)\in Q_T\,, \\
     & a(\frac{x}{\eps},\om)\nabla u^{\eps}\cdot n = 0 \text{ on }
     \partial \mathcal{D}\times (0,T)\,\end{aligned}\right.\label{stocLL}
     \end{equation}
with $T>0$, $Q_T=\mathcal{D}\times (0,T)$, and $u_0 \in H^1(\mathcal{D},\R^3)$ 
satisfying $|u_0(x)|=1$ a.e. in $\mathcal{D}$. The exchange parameter $a$ depends 
on $x$ at scale $\eps$ and on the random parameter $\om$.\\
The problem we wish to solve is therefore the stochastic homogenization of the 
Landau-Lifschitz equation, or in other words, passing to the limit in~\eqref{stocLL} 
as $\eps$ goes to 0. Notice that the problem possesses several difficulties that 
make it not obvious: global solutions of~\eqref{LL1} are only known to exist weakly 
and are not unique~\cite{alouges1992global, visintin1985landau}, the constraint 
$|u(x,t)|=1$ is not convex and the equation is highly non linear.\\
In order to proceed, we apply the stochastic two-scale convergence method. 
Originally defined in a periodic deterministic framework for the first time by G. Nguetseng~\cite{nguetseng1989general}, 
the theory was further developed by G. Allaire~\cite{allaire1992homogenization} and is by now currently used. Stochastic 
homogenization of PDEs dates back to~\cite{zbMATH03787778} for linear 
equations and~\cite{dal1985nonlinear} for nonlinear problems. Furthermore, a stochastic 
generalization of two-scale convergence (in the mean) has been first proposed in~\cite{bourgeat1994stochastic}, 
but turns out to be inadequate for our purposes. 
Instead, we use in this paper a theory developed in~\cite{zhikov2006homogenization}. 
This latter version allows us to realize the proposed program: we prove that, up to extraction, $u^{\eps}$ converges (weakly in $H^1(Q_T)$) to a (weak) solution of~\eqref{LL1} where the effective 
exchange matrix $a=a^{\text{eff}}$ is fully identified. \\
    
The paper is organized as follows. In Section 2, we recall the probability setting 
and introduce the stochastic two-scale convergence. For the sake of completeness, we 
present a complete theory, simpler than the one given in~\cite{zhikov2006homogenization},
but sufficient for our purpose. Most of the arguments are nevertheless borrowed from~\cite{zhikov2006homogenization}. 
Section 3 is devoted to the applications. The classical
diffusion equation is first quickly treated and we turn to the stochastic homogenization 
of the equation of harmonic maps into the sphere. Eventually, the Landau-Lifschitz equation 
is considered.
    
\section{Stochastic two-scale convergence}
A stochastic generalization 
of the two-scale convergence method, was proposed in~\cite{zhikov2006homogenization} 
in a very general context. In order to have the present paper self-contained, 
we restrict the approach of~\cite{zhikov2006homogenization} to a framework 
that is sufficient for our needs and we present a complete setting. 
In our opinion, this method did not have the resonance that it deserves and we hope that the  reader will find 
here a comprehensive introduction to it.

\subsection{The stochastic framework}
Let $(\Om,\mathcal{F},\mathbb{P})$  be a standard probability space with 
$\mathcal{F}$ a complete $\sigma$-algebra and let us consider a $d$-dimensional random field $a(x,\om)\in \R^{d\times d}$, defined for $x$ in $\R^d$ and $\om$ in $\Om$. We consider a group action of $\R^d$ on the set $\Om$, for $x\in\R^d$, we note $T_x$ the action on $\Om$ and we call it the translation of vector $x$.  We assume that $a$ and $T$ satisfy the following classical 
assumptions:
\begin{enumerate}[(H1)]
\item Compatibility: the mapping $(x,\om)\mapsto T_x\om$ is measurable from $\R^d\times\Om$ into $\Om$. Moreover, for every $x$ in $\R^d$ and $\om$ in $\Om$, $a(\cdot,T_x\om)=a(x+\cdot,\om)$.  
    \item Stationarity:  for every $x$ in $\R^d$, for every $k$ in $\N$, for every 
    Borelian $\mathcal{B}$ of $(\R^{d\times d})^k$, for every $y_1,\ldots,y_k$ in $\R^d$,
    \begin{equation*}
        \mathbb{P} \left\{(a(y_1,T_x\om),\ldots,a(y_k,T_x\om))\in \mathcal{B}\right\} = \mathbb{P} 
        \left\{(a(y_1,\om),\ldots,a(y_k,\om))\in \mathcal{B}\right\}.
    \end{equation*}
    \item Ergodicity: the only measurable sets that are translation invariant (that is, $A\in\mathcal{F}$ such that (up to a null subset) $T_xA=A$ for every $x\in \R^d$) have null or full measure.
        \item The matrix $a$ is symmetric, uniformly bounded and uniformly elliptic:
    \begin{equation*}
       \exists c_1,c_2>0;\;\forall \om\in \Om,\;\forall \xi\in \R^d, \;
       \forall x\in \R^d,\; c_1|\xi|^2 \leq \xi\cdot a(x,\om)\xi\leq c_2|\xi|^2. 
    \end{equation*}
    \item The matrix $a$ is \emph{stochastically continuous}: for every $x$ in $\R^d$,
        \begin{equation*}
           \forall \eps>0, \; \lim_{y\rightarrow x} \mathbb{P}(\|a(y)-a(x)\|_{\R^{d\times d}}\geq \eps) =0 \, .
        \end{equation*}
\end{enumerate}
As we shall see, the assumption (H5) allows us to restrict ourselves to the case where 
$\Om$ is a compact metric space, which is the assumption used in~\cite{zhikov2006homogenization}. 
In fact, all the results given in this paper are 
applicable if (H5) is replaced by assumptions (H5'-a)--(H5'-c) below.
\begin{enumerate}
  \item[(H5'-a)] $\Om$ is a compact metric space and ${\cal  F}$ is the completion of its Borel $\sigma$-algebra.
 \end{enumerate} 
 \begin{enumerate}
  \item[(H5'-b)] The mapping $\om \mapsto a(0,\om)$ is continuous on $\Om$.
  \end{enumerate} 
 \begin{enumerate}
\item[(H5'-c)] The group action of  $\R^d$ on $(\Om,{\cal F},\mathbb{P})$ defined by $a(\cdot,T_x \om)=a(x+\cdot,\om)$ defines a continuous action of $\R^d$ on $L^1(\Om)$. Namely, for every $\Phi\in L^1(\Om)$, $\Phi\circ T_x$ also belongs to $L^1(\Om)$ and moreover the mapping  $x\mapsto \Phi\circ T_x$ is continuous from $\R^d$ into $L^1(\Om)$.
\end{enumerate}
\begin{ex}
A class of examples satisfying assumptions (H1)--(H5), introduced in~\cite{armstrong2017quantitative}, 
can be constructed using a Poisson point process. Let us recall that a Poisson point process on a 
measurable space $(E,\mathcal{E})$ with intensity measure $\lambda$, is a random subset $\Pi$ 
of $E$ such that the following properties hold:
\begin{itemize}
      \item for every measurable set $A$ of $\mathcal{E}$, the number of points in $\Pi\cap A$, 
      denoted by $N(A)$, follows a Poisson law with mean $\lambda(A)$,
      \item for every pairwise disjoint measurable sets $A_1,\ldots,A_k$ of $\mathcal{E}$, the 
      random variables $N(A_1),\ldots,N(A_k)$ are independent.
\end{itemize}
  
We now consider the case where $\Pi$ is a Poisson point process on $\R^d$ with intensity 
measure $\lambda$ being the Lebesgue measure, defined on the Borel sets of $\R^d$, and  
$a_0$, $a_1$ are two matrices in the set 
\begin{equation*}
 \{ \widetilde{a}\in \R^{d\times d}_{\operatorname{sym}}:   \forall \xi \in \R^d, \; c_1|\xi|^2 
 \leq \xi\cdot \widetilde{a}\xi \leq c_2 |\xi|^2 \} \, ,
\end{equation*}
with $c_1,c_2>0$. We define a random field $a$ by setting, for every $x \in\R^d$, and every $\om \in \Om$,
\begin{equation*}
  a(x,\om) = \left\{
  \begin{aligned}
  a_0 & \text{ if } \operatorname{dist}(x,\Pi(\om)) \leq \frac{1}{2} \, ,\\
  a_1 & \text{ otherwise} \, .
  \end{aligned}\right. 
\end{equation*}
\begin{figure}
   \centering
   \includegraphics[scale=0.7]{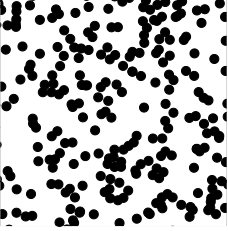}
\caption{A sample of the coefficient field defined by the homogeneous Poisson point cloud. 
The matrix $a$ is equal to $a_0$ in the black region and to $a_1$ in the white region~\cite{armstrong2017quantitative}.}
\end{figure}
Let us choose $0<\eps < \|a_0-a_1\|_{\R^{d\times d}}$. Then
\begin{equation*}
   \limsup_{y\rightarrow x}\mathbb{P}\left(\| a(y,\cdot)-a(x,\cdot)\|_{\R^{d\times d}} \geq\eps\right) = \mathbb{P}\left(\operatorname{dist}(x,\Pi)=\frac{1}{2}\right)= 0\, ,
\end{equation*}
and (H4) is verified. Hypotheses (H1), (H2) and (H3) are obviously satisfied.
\end{ex}
In the next three propositions, we prove that (H1)--(H5) imply (H5') in the case where $a$ 
is real valued for which the demonstration 
is simpler. The general case follows from direct modifications that we leave to the reader. 
(H5'-a) is established in Proposition~\ref{prop2}, (H5'b) in Proposition~\ref{continuous} and (H5'-c) in Proposition~\ref{prop5}.

Let us start with a technical lemma.
\begin{lem}\label{lem1}
Let $(\Om,\mathcal{A},\mathbb{P})$ be a standard probability space with $\mathcal{A}$ 
a complete $\sigma$-algebra and $b(x,\om)$, $x$ in $\R^d$, $\om$ in $\Om$, be 
a real bounded  random field. Let $\mathcal{N}$ be a dense countable subset of $\R^d$ and 
$\mathcal{F}\subset\mathcal{A}$ be the $\sigma$-algebra generated by $\{b(y,\cdot),\,y\in\mathcal{N}\}$.\\
Assume that for every $x$ in $\R^d$, and for every positive $\eps$,
\begin{equation*}
    \lim_{y\rightarrow x} \mathbb{P}(|b(y,\cdot)-b(x,\cdot)|\geq \eps) =0 \, .
\end{equation*}
Then $b$ is measurable with respect to $\mathcal{F}$.
\end{lem}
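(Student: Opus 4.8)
The claim to establish is that for every $x\in\R^d$ the random variable $\om\mapsto b(x,\om)$ is measurable with respect to $\mathcal{F}$; for $x\in\mathcal{N}$ this holds by the very definition of $\mathcal{F}$, so the substance of the lemma is to pass from the countable set $\mathcal{N}$ to an arbitrary point of $\R^d$. The plan is to approximate $b(x,\cdot)$ by values of $b$ on $\mathcal{N}$ using the stochastic continuity hypothesis, and then to exploit the fact that convergence in probability yields almost sure convergence along a subsequence.

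Fix $x\in\R^d$ and, using that $\mathcal{N}$ is dense, choose a sequence $(y_n)_{n\geq 1}$ in $\mathcal{N}$ with $y_n\to x$. The stochastic continuity assumption says precisely that $\mathbb{P}(|b(y_n,\cdot)-b(x,\cdot)|\geq\eps)\to 0$ for every $\eps>0$, that is, $b(y_n,\cdot)\to b(x,\cdot)$ in probability. A standard argument then extracts a subsequence $(y_{n_k})_k$ along which $b(y_{n_k},\cdot)\to b(x,\cdot)$ $\mathbb{P}$-almost surely: one selects $n_k$ so that $\mathbb{P}(|b(y_{n_k},\cdot)-b(x,\cdot)|\geq 2^{-k})\leq 2^{-k}$ and applies the Borel--Cantelli lemma.

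Each $b(y_{n_k},\cdot)$ is $\mathcal{F}$-measurable because $y_{n_k}\in\mathcal{N}$. Consequently the function $\tilde b:=\limsup_k b(y_{n_k},\cdot)$, being built from a countable family of $\mathcal{F}$-measurable functions by countable suprema and infima, is itself $\mathcal{F}$-measurable; the uniform boundedness of $b$ guarantees that $\tilde b$ is a genuine real-valued function. By the almost sure convergence above, $\tilde b=b(x,\cdot)$ outside a $\mathbb{P}$-null set, so $b(x,\cdot)$ coincides almost surely with an $\mathcal{F}$-measurable map.

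The one point that requires care—and which I expect to be the main obstacle—is the passage from \enquote{coincides almost surely with an $\mathcal{F}$-measurable function} to \enquote{$\mathcal{F}$-measurable}: the exceptional null set produced by Borel--Cantelli need not a priori belong to the generated $\sigma$-algebra $\mathcal{F}$. This is resolved by working in the complete $\sigma$-algebra $\mathcal{A}$: since $\mathcal{A}$ is complete the exceptional set lies in $\mathcal{A}$, and completing $\mathcal{F}$ by the $\mathbb{P}$-null sets of $\mathcal{A}$ (an operation that affects neither the measure nor the homogenization conclusions drawn later) renders $b(x,\cdot)$ measurable with respect to $\mathcal{F}$. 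As $x\in\R^d$ was arbitrary, every value $b(x,\cdot)$ is $\mathcal{F}$-measurable, which is the asserted conclusion.
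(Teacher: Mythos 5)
Your proof is correct and takes essentially the same route as the paper's: a sequence in $\mathcal{N}$ converging to $x$, stochastic continuity read as convergence in probability, a Borel--Cantelli extraction yielding almost sure convergence, and $\mathcal{F}$-measurability of $\limsup_k b(y_{n_k},\cdot)$ (the paper phrases this last step via the event $\{b(0,\cdot)>0\}$ and set identities rather than via $\limsup$, a purely cosmetic difference). The completion subtlety you flag is genuine and is in fact handled more explicitly by you than by the paper, whose final assertion that $A\in\mathcal{F}$ also holds only up to the $\mathbb{P}$-null exceptional set $E$ produced by Borel--Cantelli, i.e., after adjoining the null sets of $\mathcal{A}$ to $\mathcal{F}$ --- which is consistent with how the lemma is used later, where $b(x,\cdot)$ is only claimed to be \emph{almost surely} uniquely defined.
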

\begin{proof}
For simplicity, we only show that the set  $A=\{\om\in\Om:b(0,\om)>0\}$ is an element of $\mathcal{F}$. 
Let $(x_n)_{n\in\N}$ be a sequence of elements of $\mathcal{N}$ that converges to $0$. By assumption, for every $\eps>0$,
\begin{equation*}
\lim_{n\rightarrow\oo} \mathbb{P}\left(|b(x_n)-b(0)|>\eps\right)=0\, .
\end{equation*}
By a diagonal argument and up to extraction, we can assume that for every $n$ in $\N$,
\begin{equation*}
   \mathbb{P}\left(\left|b(x_n)-b(0)\right|>2^{-n}\right) \leq 2^{-n} \, .
\end{equation*}
Let us note $E=\limsup_n \left\{\om\in\Om : |b(x_n,\om) - b(0,\om)|>2^{-n}\right\}$. Borel-Cantelli lemma implies $\mathbb{P}(E)=0$. Now, for 
$\om\in \Om\setminus E$, we have $b(0,\om)= \liminf_{n} b(x_n,\om)$, so that 
\begin{equation*}
   \begin{aligned}
      A\setminus E & = \left\{ \om\in\Om\setminus E :  \liminf_{n\rightarrow\oo} b(x_n,\om) > 0 \right\}&\\
      & = \bigcap_{k>0} \left\{\om\in\Om:\liminf_{n\rightarrow\oo}b(x_n,\om)\geq \frac{1}{k}\right\}\setminus E \,.&   
      \end{aligned}
\end{equation*}
Hence, $A$ is an element of $\mathcal{F}$.
\end{proof}
We now build a compact metric space $K$ and a random process $b(\cdot,f)$ indexed by $f\in K$ with the same law as $a(\cdot,\om)$.

Let $\mathcal{N}$ be a dense countable subset of $\R^d$ containing $0$. 
Let $c_1,c_2$ in $\R$ be such that for every $x$ in $\R^d$ and $\om$ in $\Om$, $c_1 \leq a(x,\om) \leq c_2$ and let us set
  \begin{equation*}
  K = \left\{\{f(x)\}_{x\in\mathcal{N}}: \forall x \in \mathcal{N},\, f(x)\in[c_1,c_2]\right\} =  [c_1,c_2]^{\mathcal{N}}.
  \end{equation*}
  Let $\{\alpha_x\}_{x\in\mathcal{N}}$ be a summable family of positive numbers, we define a distance on $K$ by
  \begin{equation}
  d(f,g)=\sum_{x\in\mathcal{N}} \alpha_x \left|f(x)-g(x)\right|\,.\label{def_dist}
  \end{equation}
  According to Tikhonov theorem, $K$ equipped with the distance $d$ is a compact metric space.
  
  Now let us define a probability measure on the Borelians of $K$ by
  \begin{equation*}
  \mu(B)=\mathbb{P}\{\om\in\Om:\left(a(x,\om)\right)_{x\in\mathcal{N}}\in B\},
  \end{equation*}
  which is well defined according to Lemma~\ref{lem1}.
  Completing the Borelians with respect to $ \mu$, we obtain a $\sigma$-algebra $\mathcal{E}$. 
  The space $(K,\mathcal{E},\mu)$ is the canonical space for $\{a(x),\,x\in\mathcal{N}\}$.
  
  Finally, let us define $b$, for every $f$ in $K$, as follows: for every $x$ in $\mathcal{N}$,
  \begin{equation*}
  b(x,f) = f(x)\, .
  \end{equation*}
  Thanks to Lemma~\ref{lem1}, it follows that $b(x)$ is almost surely  uniquely defined for any $x$ in $\R^d$.
  We easily check that $b$ has the same law as $a$.
  We have established (H5'a), namely:
  \begin{prop}\label{prop2}\label{omegacompact} 
 The real-valued random field $b(x)$, $x\in\R^d$ over the probability space $(K, \mathcal{E},\mu)$ has the same law as $a$. Moreover $(K,d)$ is a compact metric space and ${\cal E}$ is the completion of its Borel sets with respect to $\mu$.
  \end{prop}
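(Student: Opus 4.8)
The plan is to verify separately the three assertions packaged in the statement: the topological/metric structure of $K$, the measure-theoretic status of $\mathcal{E}$, and the equality of laws, the last being the only substantive point since the first two are essentially built into the construction. First, for the structure of $K$: as $K=[c_1,c_2]^{\mathcal{N}}$ is a countable product of the compact interval $[c_1,c_2]$, Tikhonov's theorem gives compactness for the product topology, and because the weights $\{\alpha_x\}$ are positive and summable, the distance $d$ defined in \eqref{def_dist} induces exactly this product topology; hence $(K,d)$ is a compact metric space. For the measure, I would consider the map $\Phi:\om\mapsto(a(x,\om))_{x\in\mathcal{N}}$ from $\Om$ to $K$. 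The Borel $\sigma$-algebra of $K$ is generated by the coordinate cylinders, whose $\Phi$-preimages are finite intersections of sets $\{a(x,\cdot)\in I\}$, all measurable in $\Om$; hence $\Phi$ is measurable and $\mu=\Phi_{*}\mathbb{P}$ is a Borel probability measure on $K$. By construction $\mathcal{E}$ is the $\mu$-completion of the Borel $\sigma$-algebra of $K$, which is the second claim.

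It then remains to prove that $b$ and $a$ have the same finite-dimensional laws. On $\mathcal{N}$ this is immediate: for $x\in\mathcal{N}$ one has $b(x,\cdot)=\pi_x$, the $x$-th coordinate projection, so $b(x,\Phi(\om))=a(x,\om)$, and for $x_1,\dots,x_k\in\mathcal{N}$ and Borel $\mathcal{B}\subset\R^k$,
\begin{equation*}
\mu\{f:(b(x_1,f),\dots,b(x_k,f))\in\mathcal{B}\}=\mathbb{P}\{(a(x_1,\cdot),\dots,a(x_k,\cdot))\in\mathcal{B}\}
\end{equation*}
directly from $\mu=\Phi_{*}\mathbb{P}$. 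To extend $b$ to a point $x\in\R^d\setminus\mathcal{N}$, I would fix a sequence $x_n\to x$ in $\mathcal{N}$; by the matching of laws just established, $\mu(|b(x_n)-b(x_m)|>\eps)=\mathbb{P}(|a(x_n)-a(x_m)|>\eps)$, and the right-hand side tends to $0$ as $n,m\to\oo$ because stochastic continuity (H5) of $a$ at $x$ forces $(a(x_n))_n$ to be Cauchy in probability. Thus $(b(x_n))_n$ is Cauchy in $\mu$-probability, and the Borel--Cantelli/diagonal argument in the proof of Lemma~\ref{lem1} produces a $\mu$-a.s. limit $b(x,f):=\lim_n b(x_n,f)$, independent of the chosen sequence up to a null set.

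Finally, for arbitrary $x_1,\dots,x_k\in\R^d$, I would choose $\mathcal{N}$-valued sequences $x_i^{(n)}\to x_i$. Then $(b(x_1^{(n)}),\dots,b(x_k^{(n)}))\to(b(x_1),\dots,b(x_k))$ holds $\mu$-a.s., while $(a(x_1^{(n)}),\dots,a(x_k^{(n)}))\to(a(x_1),\dots,a(x_k))$ in $\mathbb{P}$-probability by (H5); since for each $n$ the two $k$-tuples are indexed in $\mathcal{N}$ and hence share the same law, passing to the limit identifies the laws of the limits, which is the desired equality. I expect the extension step to be the main obstacle: $a(x,\cdot)$ for $x\notin\mathcal{N}$ is not part of the canonical data on $K$, so one cannot transport stochastic continuity to $K$ directly and must instead bootstrap — first recast (H5) as a Cauchy-in-probability statement along $\mathcal{N}$, transport it through $\mu$ using the already-established matching on $\mathcal{N}$, build $b(x)$ as an a.s. limit, and only then recover the matching of laws at non-$\mathcal{N}$ points by the double approximation above.
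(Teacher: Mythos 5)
Your proof is correct and follows essentially the same route as the paper: Tikhonov compactness for $K=[c_1,c_2]^{\mathcal{N}}$ with the weighted-sum metric, the pushforward measure $\mu$ completed to give $\mathcal{E}$, and the extension of $b$ off $\mathcal{N}$ by transporting stochastic continuity (H5) through the already-established matching of laws on $\mathcal{N}$ and then applying the Borel--Cantelli argument of Lemma~\ref{lem1}. Your double-approximation argument identifying the finite-dimensional laws at non-$\mathcal{N}$ points is exactly the step the paper compresses into ``we easily check that $b$ has the same law as $a$.''
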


\noindent 
We check that (H5'-b) is satisfied.  
  \begin{prop} \label{continuous}
  Under the assumptions \emph{(H1)--(H5)} and with the notation introduced in the previous proposition, $b(0,\cdot)$ is continuous on $K$ for the distance $d$ defined by~\eqref{def_dist}.
  \end{prop}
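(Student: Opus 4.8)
The plan is to observe that the statement reduces to the continuity of a single coordinate projection. Recall that $\mathcal{N}$ was chosen to contain $0$, and that for $x\in\mathcal{N}$ the field is defined by the tautology $b(x,f)=f(x)$. Hence $b(0,\cdot)$ is exactly the evaluation map $f\mapsto f(0)$, i.e.\ the projection $\pi_0\colon K\to[c_1,c_2]$ onto the coordinate indexed by $0$. In particular no almost-sure extension of $b$ to points outside $\mathcal{N}$ is needed here, so there is no measure-theoretic subtlety to handle.

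First I would isolate the coordinate $0$ in the series defining $d$. Since $\{\alpha_x\}_{x\in\mathcal{N}}$ is a family of strictly positive numbers, the coefficient $\alpha_0$ is positive, and every term in $d(f,g)=\sum_{x\in\mathcal{N}}\alpha_x|f(x)-g(x)|$ is nonnegative. Therefore
\begin{equation*}
\alpha_0\,|f(0)-g(0)|\;\le\;\sum_{x\in\mathcal{N}}\alpha_x|f(x)-g(x)|\;=\;d(f,g)
\end{equation*}
for all $f,g\in K$. It then follows immediately that
\begin{equation*}
|b(0,f)-b(0,g)|=|f(0)-g(0)|\le\frac{1}{\alpha_0}\,d(f,g),
\end{equation*}
so $b(0,\cdot)$ is Lipschitz with constant $1/\alpha_0$, and in particular continuous on $(K,d)$.

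There is no real obstacle here: the only points to keep in mind are that $0\in\mathcal{N}$ (so that $b(0,f)=f(0)$ holds identically rather than only $\mu$-a.e.) and that the weights $\alpha_x$ are strictly positive (so that $\alpha_0>0$ can be divided out). Equivalently, one could argue that $d$ induces the product topology on $K=[c_1,c_2]^{\mathcal{N}}$, for which all coordinate projections are continuous by definition; the explicit Lipschitz bound displayed above is simply the quantitative form of this fact.
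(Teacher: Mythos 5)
Your proof is correct and follows essentially the same route as the paper's: since $0\in\mathcal{N}$, one isolates the term $\alpha_0|b(0,f)-b(0,\widetilde f)|$ in the series defining $d(f,\widetilde f)$ and divides by $\alpha_0>0$. The only difference is cosmetic — you phrase the conclusion as an explicit Lipschitz bound with constant $1/\alpha_0$, which the paper leaves implicit.
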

  
\begin{proof}
  Let $f \in K$. Since $0\in \mathcal{N}$, we have for every $\widetilde{f}$ in $K$,
\[
  \alpha_0\left|b(0,f)-b(0,\widetilde{f})\right| 
  \leq 
   \sum_{x\in\mathcal{N}} \alpha_x \left|b(x,f)-b(x,\widetilde{f})\right| 
   =
   d(f,\widetilde{f})\, .
\]
  As $\alpha_0 >0$, it follows that $b(0,\cdot)$ is continuous in $f$.
\end{proof}

In order to prove (H5'-c) in Proposition~\ref{prop5} we first establish the following.
 \begin{lem} \label{densityofCf}
The space $C_f(K)$ of continuous functions on $K=[c_1,c_2]^{\cal N}$ that only depend on a finite number of variables is dense in $C(K)$. Similarly, $C_f(K)$ is dense in $L^p(K,\mu)$ for any $1<p<\oo$.
  \end{lem}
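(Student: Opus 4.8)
The plan is to handle the two density statements in sequence, deriving the $L^p$ assertion from the uniform one. For the uniform density, the natural tool is the Stone--Weierstrass theorem, since $K$ is a compact metric (hence compact Hausdorff) space and $C_f(K)$ is visibly an algebra of continuous functions.

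First I would check that $C_f(K)$ is a subalgebra of $C(K)$ containing the constants. A function in $C_f(K)$ has the form $f\mapsto \psi(f(x_1),\dots,f(x_n))$ for some finite set $\{x_1,\dots,x_n\}\subset\mathcal{N}$ and some continuous $\psi\colon[c_1,c_2]^n\to\R$; the sum and the product of two such functions again depend only on the (finite) union of the two index sets, so $C_f(K)$ is stable under the algebra operations, while the constants depend on no variable at all. Each such function is genuinely continuous on $(K,d)$ because the coordinate projection $\pi_x\colon f\mapsto f(x)$ satisfies $|\pi_x(f)-\pi_x(g)|\le \alpha_x^{-1}\,d(f,g)$, hence is continuous, and $\psi$ is continuous. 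Next I would verify that $C_f(K)$ separates points, which is immediate: if $f\neq g$ in $K$ then $f(x)\neq g(x)$ for some $x\in\mathcal{N}$, and the single projection $\pi_x\in C_f(K)$ distinguishes them. Stone--Weierstrass then yields that $C_f(K)$ is dense in $C(K)$ for the uniform norm.

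For the $L^p$ statement I would combine two facts. On one hand, since $K$ is a compact metric space and $\mu$ a finite (completed) Borel measure, hence a Radon measure, $C(K)$ is dense in $L^p(K,\mu)$ for $1\le p<\oo$ (standard, via regularity of $\mu$ and Lusin's theorem, or by approximating simple functions); this covers the stated range $1<p<\oo$. On the other hand, uniform convergence implies $L^p$ convergence on the finite measure space $(K,\mu)$, because $\|\phi\|_{L^p}\le \mu(K)^{1/p}\|\phi\|_{\infty}$. Thus the uniform density of $C_f(K)$ in $C(K)$ transfers to $L^p$ density, and composing the two inclusions gives density of $C_f(K)$ in $L^p(K,\mu)$.

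I do not anticipate a genuine obstacle here; the only points requiring a little care are confirming that the coordinate maps are continuous for the specific metric $d$ (handled by the bound $|\pi_x(f)-\pi_x(g)|\le\alpha_x^{-1}d(f,g)$ together with $\alpha_x>0$) and invoking the density of $C(K)$ in $L^p$, which rests on $\mu$ being a Radon measure — guaranteed because $K$ is compact metric and $\mu$ is finite.
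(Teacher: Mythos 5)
Your proof is correct, but it takes a genuinely different route from the paper on the uniform-density part. The paper argues constructively: it enumerates ${\cal N}=\{x_1,x_2,\dots\}$, introduces the projection $\Pi_N:K\to K$ that keeps the first $N$ coordinates and sets the rest to $c_1$, and shows that $\Phi_N:=\Phi\circ\Pi_N\in C_f(K)$ satisfies
\[
\|\Phi_N-\Phi\|_\infty \;\leq\; m\Bigl(|c_2-c_1|\sum_{j>N}\alpha_{x_j}\Bigr)\ \longrightarrow\ 0,
\]
where $m$ is the modulus of continuity of $\Phi$ (uniform continuity being automatic since $K$ is compact). You instead invoke Stone--Weierstrass, after checking that $C_f(K)$ is a subalgebra containing the constants, that the coordinate projections are continuous (your Lipschitz bound $|\pi_x(f)-\pi_x(g)|\le\alpha_x^{-1}d(f,g)$ is the right observation and uses $\alpha_x>0$), and that these projections separate points of $K=[c_1,c_2]^{\cal N}$. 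Both arguments are sound; the trade-off is that your route is shorter and leans on a standard big theorem, whereas the paper's route is elementary and quantitative --- it produces an explicit approximating sequence $\Phi\circ\Pi_N$ with an error rate controlled by the tail sum $\sum_{j>N}\alpha_{x_j}$, which makes the role of the summability of $\{\alpha_x\}$ transparent. The $L^p$ step is identical in both proofs: density of $C(K)$ in $L^p(K,\mu)$ for the finite (Radon) measure $\mu$, combined with the bound $\|\cdot\|_{L^p}\le\mu(K)^{1/p}\|\cdot\|_\infty$ transferring uniform density to $L^p$ density.
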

\begin{proof}
Let us enumerate ${\cal N}=\{x_1,x_2,\cdots\}$. Let $\Phi\in C(K)$ and $N>0$. For $f\in K$, we note $f_N=\Pi_N f$ the element of $K$ defined as $f_N(x_j)=f(x_j)$ for $j=1,\cdots, N$ and $f_N(x_j)=c_1$ for $j>N$.  The mapping $\Pi_N$ is continuous from $K$ into $K$ and only depends on the first $N$ variables. Next, we define $\Phi_N :=\Phi \circ \Pi_N$ and by composition $\Phi_N$ belongs to $C_f(K)$. Let $m$ be the modulus of continuity of $\Phi$. We have 
\begin{align*}
\|\Phi_N-\Phi\|_\oo &= \sup \left\{ |\Phi(f)-\Phi(\Pi_N f)| :f\in K\right\}\\
& \leq\,   \sup\left\{m\left(\sum_{x\in {\cal N}}  \alpha_x |f(x)-\Pi_Nf(x)|\right):\, f\in K\right\}\\
& \leq  m\left(|c_2-c_1| \sum_{x\in {\cal N}\setminus\{x_1,\cdots,x_N\}} \!\!\!\!\! \alpha_x\right)\ \quad \overset{N\uparrow\oo}\longrightarrow\ 0.
\end{align*}
We conclude that $C_f(K)$ is dense into $C(K)$. Eventually the density of $C_f(K)$ in $L^p(K)$ follows from that of $C(K)$ in $L^p(K)$ for $p>1$.
\end{proof}
Let us define the dynamical system $\{T_x: K\rightarrow K\}_{x\in\R^d}$ by
\begin{equation*}
   b(y,T_x f)=b(y+x,f),
\end{equation*}
for every $ f \in K$ and $x,y \in \R^d$. Let us notice that for every $x$ in $\R^d$, $T_x$ is uniquely defined thanks to Lemma~\ref{lem1}.\\
By definition, for every  $ x,y$ in $\R^d$, $ T_{x+y}=T_x\circ T_y$ and $T_0=\operatorname{Id}_K$.
Moreover, the stationarity assumption (H2) implies
  for every  event $A$ of $\mathcal{E}$, for every $ x$ in $\R^d$, 
  \begin{equation}
 \label{eq_stationarity}
   \mu(T_x^{-1}(A)) =  \mu(A) \, .
   \end{equation}
Eventually, (H3) gives that
 $T$ is ergodic:  if an event $A$ of $\mathcal{E}$ is  $T$-invariant, then $\mu (A) = 0$ or $ \mu(A) = 1$.  \medskip
 
\noindent  
Let us now check that $T$ complies to (H5'-c).
  \begin{prop}\label{prop5}
  Let $\Phi\in L^1(K)$, then $\Phi\circ T_x$ belongs to $L^1(K)$ for $x\in \R^d$ and 
  \begin{equation}\label{ContinuityTranslation}
\lim_{y \to x} \int_K \left|\Phi(T_{y}f)-\Phi(T_{x}f)\right|\, d\mu(f) = 0, \text{ for every $x\in\R^d$, and $\Phi\in L^1(K)$}.
  \end{equation}
  \end{prop}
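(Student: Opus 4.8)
The plan is to exploit three soft ingredients---that each $T_x$ is measure-preserving, the group law $T_{x+h}=T_h\circ T_x$, and the density of $C_f(K)$---while isolating the only genuinely substantial point, namely the transport of the stochastic continuity (H5) to the canonical field $b$ on $K$. I would begin with integrability and the isometry property. Identity~\eqref{eq_stationarity} states $\mu(T_x^{-1}(A))=\mu(A)$ for all $A\in\mathcal E$, i.e. $(T_x)_*\mu=\mu$. The change-of-variables formula for a measure-preserving map then gives, for every $\Phi\in L^1(K)$, that $\Phi\circ T_x\in L^1(K)$ with $\int_K|\Phi\circ T_x|\,d\mu=\int_K|\Phi|\,d\mu$; in particular $\Phi\mapsto\Phi\circ T_x$ is an $L^1$-isometry. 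This settles the integrability claim and supplies the uniform bound used at the end.

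Next I would reduce~\eqref{ContinuityTranslation} to the case $x=0$. Writing $y=x+h$ and using $T_{x+h}=T_h\circ T_x$, one has $\Phi(T_yf)=\Phi\big(T_h(T_xf)\big)$, so the measure-preserving change of variables $g=T_xf$ turns $\int_K|\Phi(T_yf)-\Phi(T_xf)|\,d\mu(f)$ into $\int_K|\Phi(T_hg)-\Phi(g)|\,d\mu(g)$. Hence it suffices to prove
\[
\lim_{h\to 0}\int_K\left|\Phi(T_hg)-\Phi(g)\right|\,d\mu(g)=0\quad\text{for every }\Phi\in L^1(K).
\]

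For the core step I would first establish this limit for $\Phi\in C_f(K)$ depending only on the coordinates indexed by $x_1,\dots,x_N\in\mathcal N$, so that $\Phi(f)=\widetilde\Phi\big(b(x_1,f),\dots,b(x_N,f)\big)$ for some continuous, hence bounded, $\widetilde\Phi$ on $[c_1,c_2]^N$. By definition of $T_h$, $\Phi(T_hf)=\widetilde\Phi\big(b(x_1+h,f),\dots,b(x_N+h,f)\big)$. Since $b$ shares the finite-dimensional law of $a$, it inherits the stochastic continuity (H5): for each $i$, $b(x_i+h,\cdot)\to b(x_i,\cdot)$ in $\mu$-probability as $h\to0$. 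Convergence in probability of the finitely many components yields convergence in probability of the vector, and composition with the continuous map $\widetilde\Phi$ gives $\Phi(T_h\cdot)\to\Phi(\cdot)$ in probability; the uniform bound $|\widetilde\Phi|\le\|\widetilde\Phi\|_\infty$ then upgrades this to convergence in $L^1(K)$ by dominated convergence.

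Finally I would pass to general $\Phi\in L^1(K)$ by density: by Lemma~\ref{densityofCf}, together with the density of $C(K)$ in $L^1(K)$, the space $C_f(K)$ is dense in $L^1(K)$. Given $\eps>0$, choose $\Psi\in C_f(K)$ with $\|\Phi-\Psi\|_{L^1}<\eps/3$; the isometry gives $\|(\Phi-\Psi)\circ T_h\|_{L^1}=\|\Phi-\Psi\|_{L^1}<\eps/3$, and the triangle inequality $\|\Phi\circ T_h-\Phi\|_{L^1}\le\|(\Phi-\Psi)\circ T_h\|_{L^1}+\|\Psi\circ T_h-\Psi\|_{L^1}+\|\Psi-\Phi\|_{L^1}$ closes the estimate for $h$ small, using the previous step on the middle term. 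The main obstacle is precisely the core step---transferring (H5) from $a$ to $b$ on $K$ and controlling $\Phi\circ T_h$ through it; the isometry, the reduction to $x=0$, and the density argument are all routine once Lemma~\ref{densityofCf} and~\eqref{eq_stationarity} are available.
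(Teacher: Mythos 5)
Your proof is correct and follows essentially the same route as the paper: integrability and the $L^1$-isometry from stationarity~\eqref{eq_stationarity}, reduction to $x=0$ and to $\Phi\in C_f(K)$ via Lemma~\ref{densityofCf} and the density of $C(K)$ in $L^1(K)$, and then (H5) transferred to $b$ for the finite-dimensional core. The only difference is cosmetic: where you phrase the core step as convergence in probability of the coordinate vector plus uniform boundedness implying $L^1$ convergence, the paper carries out the identical estimate by hand, splitting $K$ into the sets $K_1(y)$, $K_2(y)$ and bounding the integral by $m(\eta)+2\|\varphi\|_\infty\,\mu(K_2(y))$.
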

    \begin{rem}\label{remContinuityTranslation}
With the same arguments (see the proof below), we may also prove that for any $\Phi\in L^p(K)$, the mapping $x\mapsto \Phi\circ T_x$ is continuous from $\R^d$ into $L^p(K)$.
  \end{rem}
  \begin{proof}
Let $\Phi\in L^1(K)$. By measurability of $T_x:K\to K$, we see that $\Phi\circ T_x$ is measurable, moreover by the stationarity property~\eqref{eq_stationarity}, we have \[\mu\{\Phi\circ T_x\in B\}=\mu\{\Phi\in B\}\] for every $x\in \R^d$ and every Borel set $B\subset \R$. In particular, $\Phi\circ T_x \in L^1(K)$ with $\|\Phi\circ T_x\|_{L^1(K)}=\|\Phi\|_{L^1(K)}$.\\
Let us now check the continuity of $x\in\R^d\mapsto \Phi\circ T_x$. By density of $C(K)$ in $L^1(K)$, we can assume that $\Phi$ is continuous (hence bounded) in $K$ and in fact by Lemma~\ref{densityofCf}, we can assume that $\Phi$ only depends on a finite number of variables. Eventually, by stationarity we only have to check the continuity at $x=0$. So, let us assume that for every $f$ in $K$, $\Phi(f)=\varphi(f(x_1),\cdots,f(x_N))$ with $\varphi\in C([c_1,c_2]^N)$ and $x_1,\cdots,x_N\in {\cal N}$. Let  $\eta>0$, for $y\in \R^d$ we denote 
\[
K_1(y)=\{f\in K: \left| (f(x_j+y)-f(x_j))_{j=1,\cdots,N}\right|<\eta\},\qquad K_2(y)=K\setminus K_1(y).
\]
Decomposing the integration on $K$ over $K_1$ and $K_2$, we have
\[
\int_K \left|\Phi(T_{y}f)-\Phi(f)\right|\, d\mu(f)  \leq m(\eta) +2 \|\varphi\|_\oo\mu(K_2(y))\,,
\]
where $m$ is the modulus of continuity of $\varphi$. Now, by assumption $(H5)$,  $\mu(K_2(y))$ tends to $0$ as $y$ tends to 0 and since $\eta>0$ is arbitray, we see that the integral in the left hand side also goes to 0.
  \end{proof}

From now on, we assume that  $(\Om,\mathcal{F},\mu)$  is the canonical space associated with $a$, and that it verifies (H1)-(H5'). As for every $x$ in $\R^d$ and $\omega$ in $\Omega$, $a(x,\omega)=a(0,T_x\omega)$, for simplicity we will denote $a(x,\omega)=a(T_x\omega)$. By (H5'), it holds that   $a$ is in the space of continuous functions defined on $\Om$, which will be denoted $C(\Om)$.

\begin{rem}\label{compact-separ}
In Section~\ref{ss2scaleconv} below, we introduce the notion of two-scale convergence and use test functions in $L^2(\mathcal{D}\times\Om)$ where $\mathcal{D}$ is a bounded domain of $\R^d$. In the theory developed thereafter, we use the continuous embeding of $C_c(\mathcal{D}\times \Om)$ into $L^2(\mathcal{D}\times\Om)$, which is a consequence of the finiteness of the measure $\lambda\otimes\mu$ in $\mathcal{D}\times\Om$.  The following fact is more crucial: since $\Om$ is compact, the Banach space $(C_c(\mathcal{D}\times \Om),\|\cdot\|_\oo)$ is separable, so there exists a countable subset $\Gamma\subset C_c(\mathcal{D}\times \Om)$ which is dense in $(C_c(\mathcal{D}\times \Om),\|\cdot\|_\oo)$ and in $L^2(\mathcal{D}\times\Om)$.

\end{rem}

  We end this section by recalling the Birkhov ergodic theorem, introduced in~\cite{birkhoff1931proof}, that plays a prominent role in the analysis. We first check that the quantities involved are well defined.
\begin{lem}
Let $p\geq 1$ and $u$ be a function of $L^p(\Om)$. Then, $\mu$-almost surely, $x\mapsto u(T_x\om)$ is in $L^p_{\text{loc}}(\R^d)$.
\end{lem}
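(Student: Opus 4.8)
The plan is to integrate $|u(T_x\om)|^p$ over $\Om\times B_R$ for a fixed ball $B_R=\{x\in\R^d:|x|<R\}$, apply Tonelli's theorem, and exploit the measure-preserving property~\eqref{eq_stationarity} to show that the whole double integral is finite; the finiteness of an integral of a nonnegative function then forces the inner $x$-integral to be finite for $\mu$-almost every $\om$. Before doing so, the first---and really the only delicate---step is to make sense of the joint measurability of $(x,\om)\mapsto u(T_x\om)$ on $\R^d\times\Om$. By the compatibility assumption (H1), the map $(x,\om)\mapsto T_x\om$ is measurable from $\R^d\times\Om$ into $\Om$; composing it with a Borel representative $\widetilde u$ of $u$ (recall that $\mathcal{F}$ is a completion, so $u$ agrees $\mu$-almost everywhere with some Borel function) yields a measurable function $(x,\om)\mapsto \widetilde u(T_x\om)$.

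To see that replacing $u$ by $\widetilde u$ is harmless, let $N$ be a Borel $\mu$-null set containing $\{\om:u(\om)\neq\widetilde u(\om)\}$. For each fixed $x$, stationarity~\eqref{eq_stationarity} gives $\mu(T_x^{-1}N)=\mu(N)=0$, so by Tonelli $\int_{B_R}\mu(T_x^{-1}N)\,dx=0$ and the set $\{(x,\om):T_x\om\in N\}$ is $(\lambda\otimes\mu)$-negligible. Hence $(x,\om)\mapsto u(T_x\om)$ coincides almost everywhere with the measurable function $(x,\om)\mapsto\widetilde u(T_x\om)$, and is therefore measurable for the completed product $\sigma$-algebra. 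In particular $(x,\om)\mapsto|u(T_x\om)|^p$ is nonnegative and measurable.

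With measurability in hand, Tonelli's theorem applies and gives
\[
\int_{B_R}\int_\Om |u(T_x\om)|^p\,d\mu(\om)\,dx = \int_\Om\int_{B_R}|u(T_x\om)|^p\,dx\,d\mu(\om).
\]
For each fixed $x$, the measure-preserving property of $T_x$ from~\eqref{eq_stationarity} yields $\int_\Om |u(T_x\om)|^p\,d\mu(\om)=\int_\Om|u(\om)|^p\,d\mu(\om)=\|u\|_{L^p(\Om)}^p$, which is independent of $x$. Consequently the left-hand side equals $|B_R|\,\|u\|_{L^p(\Om)}^p<\oo$, so the nonnegative function $\om\mapsto\int_{B_R}|u(T_x\om)|^p\,dx$ has finite integral and is therefore finite for $\mu$-almost every $\om$.

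Finally, to upgrade this from a single ball to $L^p_{\text{loc}}$, I apply the above to each ball $B_n$, $n\in\N$, obtaining a full-measure set $\Om_n$ on which $x\mapsto u(T_x\om)$ belongs to $L^p(B_n)$. The countable intersection $\bigcap_{n}\Om_n$ still has full measure, and on it $x\mapsto u(T_x\om)\in L^p(B_n)$ for every $n$, that is, $x\mapsto u(T_x\om)\in L^p_{\text{loc}}(\R^d)$. The main obstacle is thus entirely the measurability step: once $(x,\om)\mapsto u(T_x\om)$ is known to be jointly measurable, finiteness is an immediate consequence of Tonelli's theorem together with stationarity.
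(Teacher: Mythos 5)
Your proof is correct and follows essentially the same route as the paper's: apply Tonelli's theorem to the double integral of $|u(T_x\om)|^p$ over a bounded set times $\Om$, use the measure-preserving property~\eqref{eq_stationarity} to evaluate the inner integral as $\|u\|_{L^p(\Om)}^p$, and conclude finiteness $\mu$-almost surely. Your two additions---justifying joint measurability of $(x,\om)\mapsto u(T_x\om)$ via (H1) and a Borel representative, and intersecting over a countable family of balls to get a single full-measure set---are details the paper leaves implicit, not a different argument.
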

\begin{proof}
 For every bounded Borel set $A$ of $\R^d$, 
\begin{eqnarray*}
 \int_{\Om}\int_A |u(T_x\om)|^pdxd\mu(\om)& =& \int_A\int_{\Om}|u(T_x\om)|^pd\mu(\om)dx\\
 &=& |A|\int_{\Om}|u(\om)|^pd\mu(\om)\   < \ \oo\, ,
\end{eqnarray*}
according to Fubini's theorem and thanks to the stationarity of $a$. Therefore, $\int_A |u(T_x\om)|^pdx$ is bounded $\mu$-almost surely.
\end{proof}

\begin{thm} \emph{[Birkhov ergodic theorem]}. 
Let $f$ be a function of $  L^1(\Om)$. Then, for $\mu$-almost every $\widetilde{\om}$ in $\Om$ and for every bounded Borel set $A$ of $\R^d$,
\begin{equation}
  \frac{1}{t^d|A|}\int_{tA} f(T_x\widetilde{\om})\,dx \xrightarrow[t\rightarrow\oo]{} \int_{\Om} f(\om)\,d\mu(\om) =\E(f)\, . \label{birkhoveq}
\end{equation}
\label{birkhov}
\end{thm}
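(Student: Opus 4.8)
The plan is to derive the continuous $\R^d$-statement from the classical discrete Birkhoff theorem applied to the lattice $\Z^d$, and to extract the value of the limit from the ergodicity assumption (H3). The point requiring care is that ergodicity of the full flow $\{T_x\}_{x\in\R^d}$ cannot be used naively: the generating time-one maps $T_{e_1},\dots,T_{e_d}$ need not be individually ergodic, so the limit produced by the discrete theorem will first appear as a function that is only invariant under integer translations, and only its invariance under the \emph{whole} flow, combined with (H3), will force it to equal $\E(f)$.

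First I would treat the unit cube $A=[0,1)^d$. Introduce the cell integral $\bar f(\om):=\int_{[0,1)^d} f(T_y\om)\,dy$, which lies in $L^1(\Om)$ and satisfies $\E(\bar f)=\E(f)$ by the stationarity of $\mu$ (invariance under every $T_y$, see \eqref{eq_stationarity}) and Fubini. A change of variables gives, for every $N\in\N$,
\[
\frac{1}{N^d}\int_{[0,N)^d} f(T_y\om)\,dy=\frac{1}{N^d}\sum_{k\in\{0,\dots,N-1\}^d}\bar f(T_k\om),
\]
so the continuous average over the dilated cube $N[0,1)^d$ is exactly the $\Z^d$-lattice Birkhoff average of $\bar f$. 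The multiparameter Birkhoff--Wiener theorem for the commuting measure-preserving transformations $T_{e_1},\dots,T_{e_d}$ then yields, for $\mu$-a.e.\ $\om$, the existence of
\[
f^*(\om):=\lim_{N\to\oo}\frac{1}{N^d}\int_{[0,N)^d} f(T_y\om)\,dy,\qquad f^*\in L^1(\Om).
\]
To pass to real $t$ I would sandwich $[0,t)^d$ between $[0,N)^d$ and $[0,N+1)^d$ for $N\le t<N+1$, treating $f=f^{+}-f^{-}$ separately; since $(N/(N+1))^d\to1$, the same limit $f^*$ is obtained as $t\to\oo$.

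The key step is to identify $f^*$. For a fixed $r\in\R^d$ and a.e.\ $\om$,
\[
\frac{1}{t^d}\int_{[0,t)^d} f(T_{y+r}\om)\,dy=\frac{1}{t^d}\int_{r+[0,t)^d} f(T_z\om)\,dz ,
\]
and since $\bigl|(r+[0,t)^d)\,\triangle\,[0,t)^d\bigr|=O(t^{d-1})=o(t^d)$, the difference with the bulk average is controlled by $\tfrac{1}{t^d}\int_{D_t}|f(T_z\om)|\,dz$ over a boundary shell $D_t$ of fixed thickness $|r|$; this term tends to $0$ (by induction on the dimension, the $(d-1)$-dimensional averages of $|f|$ over the faces stay bounded while the shell carries the extra factor $O(1/t)$). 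Hence $f^*(T_r\om)=f^*(\om)$ for a.e.\ $\om$ and every $r\in\R^d$, so every level set $\{f^*>c\}$ is translation invariant up to a null set. By ergodicity (H3) such a set has measure $0$ or $1$, whence $f^*$ is a.e.\ constant, with value $\E(f^*)=\E(\bar f)=\E(f)$. This proves the cube case.

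Finally I would pass from cubes to a general bounded Borel set $A$ with $|A|>0$. Working on a single full-measure event, obtained by intersecting the (countably many) full-measure events associated with all finite unions of dyadic cubes, the convergence holds simultaneously for every such union. Given $A$ and $\varepsilon>0$, choose finite unions of dyadic cubes $U\subset A\subset V$ with $|V\setminus U|<\varepsilon$; then $tU\subset tA\subset tV$, and applying the cube result to the nonnegative functions $f^{+}$ and $f^{-}$ and letting $\varepsilon\to0$ (so that $|U|,|V|\to|A|$) gives
\[
\lim_{t\to\oo}\frac{1}{t^d}\int_{tA} f^{\pm}(T_y\om)\,dy=|A|\,\E(f^{\pm}),
\]
and subtraction yields \eqref{birkhoveq}. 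The main obstacle is the identification of the limit: one must inject ergodicity through the flow-invariance of $f^*$ (the generating transformations being possibly non-ergodic), and the only genuinely delicate estimate is the negligibility of the boundary shell $D_t$. A secondary point is that the clean inner/outer dyadic approximation requires $A$ to have a Lebesgue-null boundary (which covers every domain relevant to the applications); for arbitrary irregular Borel sets the dilates $tA$ need not form a good averaging family.
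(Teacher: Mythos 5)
The paper gives you nothing to compare against here: Theorem~\ref{birkhov} is quoted as a classical result (with a citation to Birkhoff's paper), the only preparation being the lemma that $x\mapsto f(T_x\om)$ is a.s.\ locally integrable. So your proposal must stand on its own. Its architecture is the standard and correct one: the identity $N^{-d}\int_{[0,N)^d}f(T_y\om)\,dy=N^{-d}\sum_{k}\bar f(T_k\om)$ is exact, the $\Z^d$ pointwise theorem along cubes does hold for $L^1$ functions, the sandwich in $t$ is fine, and you correctly isolate the key point that (H3) can only be exploited after $f^*$ is shown to be invariant under the \emph{whole} flow, the time-one maps being possibly non-ergodic.

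However, two of your steps fail as written, and for the same reason: moving base points. First, in the shell estimate, your induction on dimension works for the faces adjacent to the coordinate hyperplanes, but the far faces of $[0,t)^d$ lie at distance of order $t$ from the origin; there the $(d-1)$-dimensional average has the form $t^{-(d-1)}\int F\bigl(T'_y T_{te_i}\om\bigr)\,dy$ with base point $T_{te_i}\om$ moving with $t$, and an a.e.\ convergence theorem cannot be invoked along a moving base point (this is exactly the moving-averages pitfall in ergodic theory). The step is repairable with tools you already have: write the far slab as the difference of two boxes anchored at the same fixed corner $-c\mathbf{1}$, sandwich each between the cubes of sides $t$ and $t+2c$ anchored there, and conclude that both normalized integrals converge to the same limit, so the slab average tends to $0$. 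Second, the same issue reappears in your final step: the dilate $tQ$ of a dyadic cube $Q$ away from the origin is again a moving cube, not covered by your origin-anchored cube case; you would need inclusion--exclusion over the $2^d$ corners (reducing to orthant-anchored boxes) together with your cube argument applied to the rescaled flows $T_{(c_1y_1,\dots,c_dy_d)}$. Finally, your closing caveat both concedes too much and is incorrect: the theorem is asserted --- and used, via the definition of typical trajectories --- for \emph{every} bounded Borel set $A$ with $|A|>0$, and it is true for such sets. Indeed, if $A\subset B_R$ then the $tA$-average of $|f|$ is at most $\kappa_d R^d/|A|$ times the $B_{tR}$-average, so the dilates of any bounded set of positive measure are dominated by ball averages; combining Wiener's maximal ergodic inequality with the case of bounded $f$ (where inner/outer regularity of Lebesgue measure handles all Borel sets, with a null set independent of $A$) gives the full statement. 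As written, your proof establishes the theorem only for Jordan measurable sets.
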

 
 \subsection{$L^2$ Two-scale convergence}\label{ss2scaleconv}
Let us start by noticing that the Birkhov ergodic theorem presented above is not 
sufficient to obtain results valid almost surely for all functions $f$, and thus, cannot be sufficient to obtain an homogenization theorem with almost sure convergence of the solution.  
One of the reasons is the fact that the set of 
$\widetilde{\omega}$ for which the convergence hold depends on $f$. Therefore, 
we introduce the following definition.
     
\begin{definition}  
Let  $\widetilde{\om}\in\Om$. We say that $\widetilde{\om}$ is a  \emph{typical trajectory}, if, 
\begin{equation*}
  \lim_{t\rightarrow\oo} \frac{1}{t^d|A|} \int_{tA} g(T_x\widetilde{\om}) \,dx = 
  \int_{\Om}g(\om)\,d\mu (\om) = \E(g),
\end{equation*}
for every bounded Borelian $A\subset \R^d$ with $|A|>0$ and every $g$ in $C(\Om)$. 
\end{definition}
  
\begin{prop}
  Let $\widetilde{\Om}$ be the set of typical trajectories. Then $\mu(\widetilde{\Om}) = 1$.
\label{GeneralizedBirkhov}  
\end{prop}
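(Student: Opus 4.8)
The plan is to upgrade the almost-sure convergence of Birkhoff's theorem (Theorem~\ref{birkhov}), which holds for each fixed $g$ on a full-measure set depending on $g$, to a single full-measure set $\widetilde{\Om}$ that works simultaneously for all $g\in C(\Om)$. The obstacle is precisely this uniformity: a naive intersection over all $g\in C(\Om)$ is an uncountable intersection of full-measure sets and need not be measurable, let alone of full measure. The remedy is to exploit separability. Since $\Om=K$ is a compact metric space, $C(\Om)$ is separable in the sup-norm, so we may fix a countable dense subset $\{g_k\}_{k\in\N}\subset C(\Om)$. For each $k$ the ergodic theorem provides a full-measure set $\Om_k\subset\Om$ on which the ergodic averages of $g_k$ converge to $\E(g_k)$; we must also handle the quantifier over the test sets $A$.

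First I would reduce the family of admissible Borel sets $A$ to a countable one. The natural choice is to ask for convergence only on dyadic cubes (or balls with rational centers and rational radii), indexed by a countable set $\{A_j\}_{j\in\N}$, and then recover arbitrary bounded Borelians by approximation. Concretely, I would set
\begin{equation*}
\widetilde{\Om}=\bigcap_{k\in\N}\ \bigcap_{j\in\N}\ \Big\{\widetilde{\om}\in\Om:\ \lim_{t\to\oo}\tfrac{1}{t^d|A_j|}\int_{tA_j} g_k(T_x\widetilde{\om})\,dx=\E(g_k)\Big\},
\end{equation*}
which is a countable intersection of full-measure sets and hence satisfies $\mu(\widetilde{\Om})=1$. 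The remaining work is to show that membership in this countable-data set already forces the convergence for every $g\in C(\Om)$ and every bounded Borelian $A$ with $|A|>0$.

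The passage from $\{g_k\}$ to general $g$ is a standard uniform estimate. Given $g\in C(\Om)$ and $\eta>0$, pick $g_k$ with $\|g-g_k\|_\oo<\eta$; then for any $A_j$ both the ergodic average and the expectation of $g$ differ from those of $g_k$ by at most $\eta$, uniformly in $t$, so
\begin{equation*}
\limsup_{t\to\oo}\Big|\tfrac{1}{t^d|A_j|}\int_{tA_j} g(T_x\widetilde{\om})\,dx-\E(g)\Big|\leq 2\eta,
\end{equation*}
and letting $\eta\downarrow 0$ gives convergence for every continuous $g$ on the dyadic family. The passage from the dyadic cubes $A_j$ to an arbitrary bounded Borelian $A$ is where I expect the only real care is needed: given $A$ and $\delta>0$, I would sandwich $A$ between finite unions of dyadic cubes $A^-\subset A\subset A^+$ with $|A^+\setminus A^-|<\delta$, use the nonnegativity available after replacing $g$ by $g-\min g\geq 0$ (or bounding $|g|$ by a constant) to control $\int_{t(A\setminus A^-)}$ and $\int_{t(A^+\setminus A)}$ by the already-established limits on $A^+\setminus A^-$, and then let $\delta\downarrow 0$.

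The main obstacle, to restate it, is not any single estimate but the structural point that Birkhoff's theorem quantifies a full-measure set after fixing the data $(g,A)$, whereas a typical trajectory must serve all data at once; separability of $C(\Om)$ together with a countable generating family of test sets is exactly what converts the uncountable quantifier into a countable intersection while the density and monotone-approximation arguments recover the full range of $g$ and $A$ with no further exceptional sets.
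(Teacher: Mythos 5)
Your handling of the quantifier over $g$ is exactly the paper's proof: compactness of $\Om$ gives separability of $C(\Om)$, Birkhoff is applied to each member of a countable dense family $\{g_k\}$, one intersects the resulting full-measure sets, and the passage to arbitrary $g\in C(\Om)$ is the same $3\eps$ triangle-inequality estimate, which is uniform in $t$ and in $A$ because $|\frac{1}{t^d|A|}\int_{tA}(g-g_k)(T_x\widetilde\om)\,dx|\leq\|g-g_k\|_\oo$. Where you diverge is the quantifier over $A$: the paper's Theorem~\ref{birkhov} is stated with ``for every bounded Borel set $A$'' \emph{inside} the almost-sure conclusion (the exceptional set depends on $f$ only), so the paper needs no countable family $\{A_j\}$ and no approximation argument in $A$ at all. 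You instead assume the weaker, more standard form of Birkhoff (exceptional set depending on the pair $(g,A)$), which is a legitimate and more self-contained route, but it makes your final approximation step load-bearing --- and that step, as written, is false.

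Concretely: a general bounded Borelian $A$ cannot be sandwiched between \emph{finite} unions of dyadic cubes $A^-\subset A\subset A^+$ with $|A^+\setminus A^-|<\delta$. If $A$ is a fat Cantor set, it has empty interior, so the only finite union of cubes contained in $A$ is empty and $|A\setminus A^-|=|A|>0$; if $A$ is a bounded open set dense in the unit cube with $|A|=1/2$, any finite union of cubes containing $A$ has closure containing the whole unit cube, hence measure at least $1$, so $|A^+\setminus A|\geq 1/2$. Either failure destroys the sandwich, and with it your plan of controlling the error integrals ``by the already-established limits on $A^+\setminus A^-$''. The repair is standard and uses the boundedness you already mentioned in passing: normalize so that $0\leq g\leq M:=\|g\|_\oo$ (adding a constant shifts both the averages and $\E(g)$ by that constant); by regularity of Lebesgue measure pick a bounded open $U\supset A$ with $|U\setminus A|<\delta$, write $U$ as a countable disjoint union of half-open dyadic cubes and keep a finite subunion $U_N$ with $|U\setminus U_N|<\delta$; then bound $\frac{1}{t^d}\int_{t(U\setminus A)}g(T_x\widetilde\om)\,dx\leq M|U\setminus A|$ and $\frac{1}{t^d}\int_{t(U\setminus U_N)}g(T_x\widetilde\om)\,dx\leq M|U\setminus U_N|$ by the sup-norm (not by ergodic limits on those sets), and use the countable-family convergence only on $U_N$. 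This yields $|A|\E(g)-C\delta\leq\liminf\leq\limsup\leq|A|\E(g)+C\delta$ and closes your argument. Alternatively, quote Theorem~\ref{birkhov} in the form the paper states it and delete the $A$-reduction entirely; that is the paper's (shorter) proof.
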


\begin{proof} We first notice that the compactness of $\Om$ entails that 
$C(\Om)$ (endowed with the norm $\|g\|_{\oo}=\sup_{\Om}|g|$) is separable. 
We thus consider 
\[
\Gamma=\{g_k,\,k\in\N\}
\]  
a dense countable subset of  
$C(\Om)$. According to Birkhov ergodic theorem, for every $k$ in $\N$, 
there exists $\Om_k$ in $\mathcal{F}$ such that $ \mu(\Om_k)=1$,  
and $\Om_k \subset\Om_{k-1}\subset \cdots\subset\Om_0$ and,  
for every $\om'\in\Om_k$, 
\begin{equation*}
  \lim_{t\rightarrow \oo} \frac{1}{t^d|A|}\int_{tA} g_k(T_x\om') \,dx = 
  \int_{\Om}g_k(\om)\,d\mu (\om) = \E( g_k),
\end{equation*}
for every bounded Borel set $A$ with $|A|>0$. Considering $\Omega'=\cap_{k\in\N}\Omega_k$, 
we have $\mu(\Omega')=1$, and it only remains to show that $\Omega' \subset \widetilde{\Omega}$.\\
Let $\omega'\in\Omega'$, $g\in C(\Omega)$, $\epsilon>0$ and $g'\in\Gamma$ 
such that $\|g'-g\|_{\infty}<\epsilon$. For every bounded Borel set  
$A\subset \R^d$ and $t>0$,
\begin{align*}
  \MoveEqLeft[15] \left| \frac{1}{t^d|A|}\int_{tA} g(T_x\omega') \,dx -\E(g)\right| \\
\MoveEqLeft[14]\leq\left| \frac{1}{t^d|A|}\int_{tA} [g-g'](T_x\omega') \,dx \right|
+\left| \frac{1}{t^d|A|}\int_{tA} g'(T_x\omega') \,dx - \E(g')\right| 
   + \left|\E(g'-g)\right|\\
   \MoveEqLeft[14]\leq 2\|g'-g\|_\infty +\left| \frac{1}{t^d|A|}\int_{tA} g'(T_x\omega') \,dx - \E(g')\right|\\
\MoveEqLeft[14]\leq  3 \epsilon, \, 
\end{align*}
for $t$ large enough since $g'\in \Gamma$ and  $\omega'\in \Omega'$. Therefore, 
\begin{equation*}
  \lim_{t\rightarrow \oo} \frac{1}{t^d|A|}\int_{tA} g(T_x\om') dx = 
  \int_{\Om}g(\om)d\mu (\om) = \E(g) \, ,
\end{equation*}
which gives $\Om'\subset\widetilde{\Om}$. We deduce 
that $\mu(\widetilde{\Om})\geq \mu(\Om')=1$.
\end{proof}

\begin{prop}
\label{meanvalue}
\emph{[Mean-value property.]} Let $g$ be a function in  $C(\Om)$. Then, 
for every $\varphi$ in $C_c(\R^d)$, for every $\widetilde{\om}$ in $\widetilde{\Om}$,
\begin{equation*}
  \lim_{\eps\downarrow0} \int_{\R^d} \varphi(x)g(T_{x/\eps}\widetilde{\om})dx = 
  \int_{\R^d}\varphi(x)dx\int_{\Om} g(\om)d\mu(\om) = \E(g)\int_{\R^d}\varphi(x)dx\, .
\end{equation*}
\end{prop}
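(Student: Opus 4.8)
The plan is to reduce the statement to the defining property of a typical trajectory by a change of variables, treating first indicator functions and then passing to a general $\varphi\in C_c(\R^d)$ by a uniform approximation argument, exploiting that $g$ is bounded as a continuous function on the compact space $\Om$. I would begin with the case $\varphi=\mathbf{1}_A$ for a bounded Borel set $A\subset\R^d$ with $|A|>0$. Setting $t=1/\eps$ and performing the change of variables $x=\eps y$ gives
\[
\int_{\R^d}\mathbf{1}_A(x)\,g(T_{x/\eps}\widetilde{\om})\,dx
= \eps^d\int_{(1/\eps)A} g(T_y\widetilde{\om})\,dy
= |A|\,\frac{1}{t^d|A|}\int_{tA} g(T_y\widetilde{\om})\,dy.
\]
Since $\widetilde{\om}\in\widetilde{\Om}$ and $g\in C(\Om)$, the definition of a typical trajectory shows that the right-hand side converges to $|A|\,\E(g)=\E(g)\int_{\R^d}\mathbf{1}_A(x)\,dx$ as $\eps\downarrow 0$. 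When $|A|=0$ both sides vanish, so the identity is trivial there.

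By linearity this extends immediately to every simple function $\psi=\sum_{i=1}^k c_i\mathbf{1}_{A_i}$ with the $A_i$ bounded Borel sets. To reach a general $\varphi\in C_c(\R^d)$ I would fix a compact set $K\supset\operatorname{supp}\varphi$ and, given $\delta>0$, use the uniform continuity of $\varphi$ to select such a simple function $\psi$ supported in $K$ with $\|\varphi-\psi\|_\oo<\delta$ (for instance, a step function subordinate to a fine grid of cubes covering $K$). Writing $\|g\|_\oo=\sup_\Om|g|<\oo$, the discrepancy between the two integrals is controlled by
\[
\left|\int_{\R^d}(\varphi-\psi)(x)\,g(T_{x/\eps}\widetilde{\om})\,dx\right|\le \|g\|_\oo\,|K|\,\delta,
\]
and likewise $\bigl|\E(g)\int_{\R^d}(\varphi-\psi)\,dx\bigr|\le \|g\|_\oo\,|K|\,\delta$. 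Combining these two estimates with the convergence already established for $\psi$, and then letting first $\eps\downarrow0$ and afterwards $\delta\downarrow0$, yields the claim.

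The only genuinely delicate point is that the approximation error must be uniform in $\eps$, and this is exactly what the structure above provides: the functions $\varphi$, $\psi$, and hence their difference, are supported in the fixed compact set $K$ independently of $\eps$, so the bound $\|g\|_\oo\,|K|\,\delta$ does not degenerate as $\eps\downarrow0$, while $\|g\|_\oo<\oo$ is guaranteed by the compactness of $\Om$. Everything else is a routine three-$\eps$ argument entirely analogous to the proof of Proposition~\ref{GeneralizedBirkhov}, the essential new ingredient being the scaling $x=\eps y$ that converts the oscillatory integral into the ergodic average appearing in the definition of $\widetilde{\Om}$.
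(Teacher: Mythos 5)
Your proof is correct and takes essentially the same approach as the paper's: the result for simple functions via the definition of a typical trajectory (your change of variables $x=\eps y$ makes this reduction explicit), followed by approximation of $\varphi\in C_c(\R^d)$ by simple functions with an error bound uniform in $\eps$. The paper phrases the approximation in the $L^1(\R^d)$ norm while you use sup-norm approximation on a fixed compact support, but since $\|\varphi-\psi\|_{L^1}\leq \|\varphi-\psi\|_\oo\,|K|$ these give the same estimate, so the two arguments coincide in substance.
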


\begin{proof}
We use Proposition~\ref{GeneralizedBirkhov} to get the result for a simple function 
$\varphi =\sum_{i=1}^N a_i\mathbf{1}_{A_i}$ where $(A_i)_{1\leq i \leq N}$ are bounded Borel sets of $\R^d$,
and $\mathbf{1}_{A_i}$ denotes the characteristic function of $A_i$. 
The conclusion comes from the approximation of any function 
$\varphi\in C_c(\R^d)$ by simple functions in the 
$L^1(\R^d)$ norm.
%
 \end{proof}
 
We now state the two-scale convergence definition and prove the main compactness theorem.
  
\begin{definition}
\label{def}
Let $\widetilde{\om}\in\widetilde{\Om}$ be fixed, let $\{v^\eps\}_{\eps\in I}$ be a family of elements of $L^2(\mathcal{D})$ indexed by $\eps$ in a set $I\subset (0,+\oo)$ with $0\in \overline I$ and let $v\in L^2(\mathcal D \times \Om)$. Let $(\eps_k)_{k\geq0}\subset I$ be a decreasing sequence converging to $0$, we say that the subsequence $(v^{\eps_k})$ 
\emph{weakly two-scale converges} to $v$ 
if, for every $ \varphi$ in $C_c^{\oo}(\mathcal{D})$ and every $b$ in $C(\Om)$,
\begin{equation*}
   \lim_{k\to \infty} \int_{\mathcal{D}} v^{\eps_k}(x)\varphi (x) b (T_{x\slash\eps_k}\widetilde{\om})\,dx 
    = \int_{\mathcal{D}} \int_{\Om} v(x,\om)\varphi (x) 
    b(\om) d \mu (\om) \,dx \, .
\end{equation*}
We write
\begin{equation*}
  v^{\eps_k}\in L^2(\mathcal D) \overset{2}{\rightharpoonup} v\in L^2(\mathcal D\times \Om)  . 
\end{equation*}
\end{definition}
It is worth noticing that this definition of two-scale convergence, and thus the limit, 
depends on the choice of $\widetilde{\om}$ in $\widetilde{\Om}$. From now on, we assume that $\widetilde{\om} \in \widetilde{\Om}$ is fixed.
 
The main result of this subsection is the following theorem. It is the stochastic 
equivalent of the two-scale compactness theorem provided in~\cite{allaire1992homogenization} for periodic homogenization.

\begin{thm}\label{compacite}
Let $\{v^{\eps}\}_{\eps\in I}$ be a bounded family in $L^2(\mathcal{D})$, with $I$ as in the above definition. 
Then, there exist a sequence  $(\eps_k)_{k\geq0}$ in $I^\N$ that tends to zero, and 
$v^0$ in $L^2(\mathcal{D}\times \Om)$ such that $(v^{\eps_k})_k$ weakly 
two-scale converges to $v^0$.
\end{thm}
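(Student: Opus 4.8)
The plan is to realize two-scale convergence as ordinary weak convergence in a single Hilbert space, so that the desired compactness follows from the Banach--Alaoglu theorem together with a separability argument, exactly as in the periodic theory of Allaire. The natural target space is $L^2(\mathcal{D}\times\Om)$, equipped with the product measure $\lambda\otimes\mu$ (finite on $\mathcal D\times\Om$ since $\mathcal D$ is bounded and $\mu$ is a probability measure). The test functions are the products $\varphi(x)b(\om)$ with $\varphi\in C_c^\infty(\mathcal D)$ and $b\in C(\Om)$; to each such test function and each $\eps$ I associate the oscillating function $x\mapsto \varphi(x)b(T_{x/\eps}\widetilde\om)$, and the two-scale convergence statement asserts convergence of the pairings $\int_{\mathcal D}v^\eps(x)\,\varphi(x)\,b(T_{x/\eps}\widetilde\om)\,dx$ against these.

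First I would fix a countable dense family. By Remark~\ref{compact-separ}, since $\Om$ is compact the space $C_c(\mathcal D\times\Om)$ is separable, so there is a countable set $\Gamma=\{\psi_j\}_{j\in\N}$ dense in $C_c(\mathcal D\times\Om)$ for $\|\cdot\|_\infty$ and hence in $L^2(\mathcal D\times\Om)$; I may take the $\psi_j$ to be finite linear combinations of products $\varphi(x)b(\om)$. For each fixed $j$, the sequence of real numbers
\begin{equation*}
  \ell_\eps(\psi_j):=\int_{\mathcal D} v^\eps(x)\,\psi_j\!\left(x,T_{x/\eps}\widetilde\om\right)\,dx
\end{equation*}
is bounded: by Cauchy--Schwarz it is controlled by $\|v^\eps\|_{L^2(\mathcal D)}$ times $\big(\int_{\mathcal D}|\psi_j(x,T_{x/\eps}\widetilde\om)|^2\,dx\big)^{1/2}$, and the mean-value property (Proposition~\ref{meanvalue}) applied to $|\psi_j|^2$ shows the latter converges to $\|\psi_j\|_{L^2(\mathcal D\times\Om)}$, hence stays bounded; combined with the uniform bound on $\|v^\eps\|_{L^2(\mathcal D)}$ this gives a uniform bound on $\ell_\eps(\psi_j)$. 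A diagonal extraction over $j\in\N$ then produces a single subsequence $(\eps_k)$ along which $\ell_{\eps_k}(\psi_j)$ converges for every $j$, and by the density of $\Gamma$ and the uniform $L^2$-bound on the oscillating test functions this upgrades to convergence of $\ell_{\eps_k}(\psi)$ for every $\psi\in C_c(\mathcal D\times\Om)$, indeed to a bounded linear functional $L$ on $L^2(\mathcal D\times\Om)$.

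Finally I would identify the limit functional with an element $v^0\in L^2(\mathcal D\times\Om)$. The functional $\psi\mapsto L(\psi)=\lim_k \ell_{\eps_k}(\psi)$ is linear, and the continuity estimate above shows $|L(\psi)|\le \big(\limsup_k\|v^{\eps_k}\|_{L^2(\mathcal D)}\big)\,\|\psi\|_{L^2(\mathcal D\times\Om)}$, so $L$ is bounded on the dense subspace $C_c(\mathcal D\times\Om)$ and extends to a bounded functional on $L^2(\mathcal D\times\Om)$; by the Riesz representation theorem there is $v^0\in L^2(\mathcal D\times\Om)$ with $L(\psi)=\int_{\mathcal D}\int_\Om v^0(x,\om)\psi(x,\om)\,d\mu(\om)\,dx$. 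Specializing to $\psi(x,\om)=\varphi(x)b(\om)$ gives exactly the defining relation of Definition~\ref{def}, so $(v^{\eps_k})$ weakly two-scale converges to $v^0$. The main obstacle, and the step that must be handled with care, is controlling the $L^2(\mathcal D)$-norm of the oscillating test functions $x\mapsto\psi_j(x,T_{x/\eps}\widetilde\om)$ uniformly in $\eps$ and passing this bound through the density argument: this is precisely where the mean-value property for the fixed typical trajectory $\widetilde\om$, rather than a mere almost-sure statement, is indispensable, and it is what makes the diagonal extraction yield a genuinely bounded limit functional rather than one defined only on $\Gamma$.
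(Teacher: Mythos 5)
Your proposal is correct and takes essentially the same route as the paper's own proof: a diagonal extraction over a countable family of (linear combinations of) tensor-product test functions dense in both the uniform and $L^2(\mathcal{D}\times\Om)$ topologies, with the key uniform bound obtained from Cauchy--Schwarz plus the mean-value property of Proposition~\ref{meanvalue} applied to $|\psi_j|^2$, followed by the sup-norm density upgrade to all test functions and the Riesz representation theorem. The only cosmetic difference is that the paper phrases the dense family $\Gamma$ as a subset of the linear span of $K=\{\varphi\, b\}$ dense in $C(\mathcal{D}\times\Om)$, while you phrase it inside $C_c(\mathcal{D}\times\Om)$; the extraction, estimates and identification of the limit are the same.
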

  
\begin{proof}
Let  $K =  \{\varphi\, b, \, \varphi\in C_c^{\oo}({\cal D}),b\in C(\Om)\} $ be the set 
of test functions and $\left<K\right>$  be its linear span, {\it i.e.} $\left<K\right>$ is the tensor product $C_c^{\oo}({\cal D})\otimes C(\Om)$. Using the Cauchy-Schwarz inequality and the boundedness of $\{v^{\eps}\}$ in 
$L^2(\mathcal{D})$, it holds that, for every $\eps>0$ and every  $\Phi$ in $\left<K\right>$,
\begin{equation*}
  \left|\int_{\mathcal{D}} v^{\eps}(x)\Phi(x,T_{x/\eps}\widetilde{\om})dx\right| 
  \leq C \left( \int_{\mathcal{D}} \Phi^2(x,T_{x/\eps}\widetilde{\om})dx\right)^{1/2}.
\end{equation*}
Decomposing $\Phi$ as $\Phi=\sum_{p=1}^N \Phi_p$ with $\Phi_1,\cdots,\Phi_N\in K$, and using Proposition~\ref{meanvalue}, it is easily checked that
\begin{equation*}
  \lim_{\eps\to 0} \int_{\mathcal{D}} \Phi^2(x,T_{x/\eps}\widetilde{\om})dx 
  =  \int_{\mathcal{D}\times\Om} \Phi^2(x,\om)dxd\mu(\om)\, .
\end{equation*}
Therefore,
\begin{equation}
  \limsup_{\eps\to 0} \left|\int_{\mathcal{D}} v^{\eps}(x)\Phi(x,T_{x/\eps}
  \widetilde{\om})dx\right| \leq C \|\Phi\|_{L^2(\mathcal{D}\times\Om)} \, . 
\label{limsup}
\end{equation}
In particular, for every $\Phi\in\left<K\right>$, the family $\{\int_{\mathcal{D}} v^{\eps}(x)\Phi(x,T_{x/\eps}\widetilde{\om})dx\}_{\eps>0}$ is bounded in $\R$.
Recalling Remark~\ref{compact-separ}, we pick a countable subset $\Gamma\subset \left<K\right>$ which is both dense in $C(\mathcal{D}\times \Om)$ and in $L^2(\mathcal{D}\times \Om)$. Using a diagonal process, there exists a sequence $(\eps_k)_k$ that tends to zero, such that for every $\Psi$ in $\Gamma$,
 \begin{equation}\label{l(Psi)}
 \lim_{k\rightarrow\oo} \int_{\mathcal{D}} v^{\eps_k}(x)\Psi\left(x,T_{x/\eps_k}\widetilde{\om}\right)dx = \ell(\Psi) \, ,
 \end{equation}
for some real number $\ell(\Psi)$.  By linearity, this relation extends to the linear span $\left<\Gamma\right>$ of $\Gamma$, the function $\ell$ is a linear form on $\left<\Gamma\right>$ and by~\eqref{limsup}, for every $\Psi\in\left<\Gamma\right>$, there holds 
\begin{equation}\label{estiml(Psi)}
\ell(\Psi)\leq C \|\Psi\|_{L^2(\mathcal{D}\times\Om)}.
\end{equation}
By density of $\Gamma$ in $L^2(\mathcal{D}\times \Om)$, we see that $\ell$ uniquely extends as a continuous linear map $\ell:L^2(\mathcal{D}\times\Om)\to\R$ satisfying~\eqref{estiml(Psi)}. Now, let $\Phi\in K$ and let $\eta>0$, there exists $\Psi\in \Gamma$ such that $\|\Psi-\Phi\|_\oo<\eta$. Using~\eqref{limsup}, we compute
\begin{multline*}
\left|\int_{\mathcal{D}} v^{\eps_k}(x)\Phi\left(x,T_{x/\eps_k}\widetilde{\om}\right)dx - \ell(\Phi)\right|
\leq \left|\int_{\mathcal{D}} v^{\eps_k}(x)(\Phi-\Psi)\left(x,T_{x/\eps_k}\widetilde{\om}\right)dx \right| \\+\left|\int_{\mathcal{D}} v^{\eps_k}(x)\Psi\left(x,T_{x/\eps_k}\widetilde{\om}\right)dx - \ell(\Psi)\right| + |\ell(\Psi-\Phi)|\\
\leq C\sqrt{|\mathcal D|} \; \|\Phi-\Psi\|_\oo +\left|\int_{\mathcal{D}} v^{\eps_k}(x)\Psi\left(x,T_{x/\eps_k}\widetilde{\om}\right)dx - \ell(\Psi)\right| + C\|\Psi-\Phi\|_{L^2}\\
\leq  2C\sqrt{|\mathcal D|}\, \eta+\left|\int_{\mathcal{D}} v^{\eps_k}(x)\Psi\left(x,T_{x/\eps_k}\widetilde{\om}\right)dx - \ell(\Psi)\right|\,.
\end{multline*}
Since $\Psi\in \Gamma$, the last term tends to $0$ as $k$ tends to infinity and since $\eta>0$ is arbitrary, we obtain that~\eqref{l(Psi)} holds for every $\Psi\in K$.\\
Eventually, since $\ell$ is a continuous linear form on the Hilbert space $L^2(\mathcal{D}\times\Om)$, by Riesz representation theorem, there exists $v^0$ in $L^2(\mathcal{D}\times\Om)$ such that for every $\Phi$ in $L^2(\mathcal{D}\times\Om)$, 
 \begin{equation*}
 \ell(\Phi) = \int_{\mathcal{D}\times\Om} v^0(x,\om)\Phi(x,\om)dxd\mu(\om)\,.
 \end{equation*}
 This entails, in particular that for every $\varphi$ in $C_c^{\oo}(\mathcal{D})$ and $b$ in $C(\Om)$,
 \begin{equation*}
 \lim_{k\rightarrow\oo} \int_{\mathcal{D}}v^{\eps_k}(x)\varphi(x)b(T_{x/\eps_k}\widetilde{\om})dx = \ell(\varphi\, b) = \int_{\mathcal{D}\times \Om} v^0(x,\om)\varphi(x)b(\om)dxd\mu(\om) \, .
 \end{equation*}
  \end{proof}
  
Two-scale convergence is mostly a weak notion. A corresponding strong two-scale 
convergence exists that allows us to use weak-strong convergence properties as stated in the following. 
  
\begin{definition}
The sequence $(v^{\eps_k})_{k\geq0}$ of $L^2(\mathcal{D})$ is said to  
\emph{strongly two-scale converge} to a function $v^0$ in $L^2(\mathcal{D}\times\Om)$ 
as $\eps$ goes to $0$ if $(v^{\eps_k})$ weakly two-scale converges to  $v^0$ and if
\begin{equation*}
    \lim_{\eps_k\to 0} \|v^{\eps_k}\|_{L^2(\mathcal{D})} = \|v^0\|_{L^2(\mathcal{D}\times \Om)} \, .
\end{equation*}
\end{definition}
\begin{rem}
\label{remstrongconv}
An important example of strong convergence is the following. If $(v^{\eps_k})$ converges towards some function $\bar{v}$ strongly in $L^2(\mathcal{D})$, then, by definition,  $(v^{\eps_k})$ weakly two-scale converges towards $v^0$ defined as $v^0(x,\om):=\bar{v}(x)$.
Moreover,  
\[
\|v^0\|_{L^2(\mathcal{D}\times \Om)}= \|\bar{v}\|_{L^2(\mathcal{D})} = \lim_{\eps_k\downarrow 0} \|v^{\eps_k}\|_{L^2(\mathcal{D})}\,,
\]
and $(v^{\eps_k})$  strongly two-scale converges towards $v^0$.
\end{rem}

Strong two-scale convergence allows us to have a two-scale version of the weak-strong convergence principle.

\begin{prop}\label{cv2echforte}
Let $\{v^{\epsilon}\}_{\epsilon>0}$ and $\{ u^{\epsilon} \}_{\epsilon>0}$ be two families of 
functions of $L^2(\mathcal{D})$. If  $\{v^{\epsilon}\}_{\epsilon>0}$ strongly two-scale 
converges to $v^0$ and $\{ u^{\epsilon} \}_{\epsilon>0}$ weakly two-scale converges to $u^0$, 
for every $ \varphi$ in $C_c^{\infty}(\mathcal{D})$ and $b$ in $C(\Omega)$,
\begin{equation*}
   \lim_{\eps_k \to 0}  \int_{\mathcal{D}} u^{\eps_k}(x) v^{\eps_k}(x)\varphi (x) b (T_{x/\eps_k}\widetilde{\om})\,dx 
   = \int_{\mathcal{D}} \int_{\Om} u^0(x,\om) v^0(x,\om)\varphi (x) 
    b(\om) \,d \mu (\om) \,dx \, .
\end{equation*}
\end{prop}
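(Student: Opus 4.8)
The plan is to realize the statement as the two-scale version of the classical weak-strong convergence principle: strong two-scale convergence carries norm information, and this is exactly what compensates the merely weak convergence of $\{u^{\eps_k}\}$. Throughout, for $\Phi\in\Ens{K}$ (the tensor product $C_c^\oo(\mathcal{D})\otimes C(\Om)$) I abbreviate $\Phi_{\eps_k}(x):=\Phi(x,T_{x/\eps_k}\widetilde{\om})$, and I write the test function of the statement as $\Phi=\varphi\,b$. Note that $\om\mapsto(\cdot)(T_{x/\eps_k}\widetilde\om)$ is multiplicative, so $(\Psi\Phi)_{\eps_k}=\Psi_{\eps_k}\Phi_{\eps_k}$, and that products and squares of elements of $\Ens{K}$ stay in $\Ens{K}$.

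First I would establish the key auxiliary fact: for every $\Psi\in\Ens{K}$,
\[
\lim_{k\to\oo}\bigl\|v^{\eps_k}-\Psi_{\eps_k}\bigr\|_{L^2(\mathcal{D})}^2 = \bigl\|v^0-\Psi\bigr\|_{L^2(\mathcal{D}\times\Om)}^2 .
\]
Expanding the square,
\[
\bigl\|v^{\eps_k}-\Psi_{\eps_k}\bigr\|_{L^2(\mathcal{D})}^2 = \|v^{\eps_k}\|_{L^2(\mathcal{D})}^2 - 2\int_{\mathcal{D}} v^{\eps_k}\Psi_{\eps_k}\,dx + \int_{\mathcal{D}}\Psi_{\eps_k}^2\,dx ,
\]
I would pass to the limit term by term: the first term tends to $\|v^0\|_{L^2(\mathcal{D}\times\Om)}^2$ by the very definition of strong two-scale convergence; the second tends to $\int_{\mathcal{D}\times\Om}v^0\Psi\,dx\,d\mu$ by the weak two-scale convergence of $v^{\eps_k}$ tested against $\Psi\in\Ens{K}$ (the definition, extended by linearity from $K$ to $\Ens{K}$); and, since $\Psi^2\in\Ens{K}$, the third tends to $\int_{\mathcal{D}\times\Om}\Psi^2\,dx\,d\mu$ by the mean-value property of Proposition~\ref{meanvalue}, exactly as in the computation inside the proof of Theorem~\ref{compacite}. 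Recombining gives the claim. I regard this step as the crux of the argument: it is precisely here that the norm hypothesis encoded in strong two-scale convergence is used, and the identity fails for a merely weakly two-scale convergent $v^{\eps_k}$.

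With this in hand, fix $\delta>0$ and, using the density of $\Ens{K}$ in $L^2(\mathcal{D}\times\Om)$ (Remark~\ref{compact-separ}), choose $\Psi\in\Ens{K}$ with $\|v^0-\Psi\|_{L^2(\mathcal{D}\times\Om)}<\delta$. I then split
\[
\int_{\mathcal{D}} u^{\eps_k}v^{\eps_k}\Phi_{\eps_k}\,dx = \int_{\mathcal{D}} u^{\eps_k}(\Psi\Phi)_{\eps_k}\,dx + \int_{\mathcal{D}} u^{\eps_k}\bigl(v^{\eps_k}-\Psi_{\eps_k}\bigr)\Phi_{\eps_k}\,dx .
\]
Since $\Psi\Phi\in\Ens{K}$, the first integral converges to $\int_{\mathcal{D}\times\Om}u^0\Psi\Phi\,dx\,d\mu$ by the weak two-scale convergence of $u^{\eps_k}$. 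For the second, Cauchy-Schwarz together with $|\Phi_{\eps_k}|\le\|\varphi\|_\oo\|b\|_\oo$ and the uniform bound $C:=\sup_k\|u^{\eps_k}\|_{L^2(\mathcal{D})}<\oo$ (available since the families under consideration, e.g. those produced by Theorem~\ref{compacite}, are $L^2$-bounded) gives
\[
\left|\int_{\mathcal{D}} u^{\eps_k}\bigl(v^{\eps_k}-\Psi_{\eps_k}\bigr)\Phi_{\eps_k}\,dx\right| \le C\,\|\varphi\|_\oo\,\|b\|_\oo\,\bigl\|v^{\eps_k}-\Psi_{\eps_k}\bigr\|_{L^2(\mathcal{D})},
\]
whose $\limsup$ in $k$ is at most $C\|\varphi\|_\oo\|b\|_\oo\,\delta$ by the auxiliary fact.

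Finally I would compare with the target, estimating $|\int_{\mathcal{D}\times\Om}u^0(v^0-\Psi)\Phi\,dx\,d\mu|\le\|u^0\|_{L^2(\mathcal{D}\times\Om)}\|\varphi\|_\oo\|b\|_\oo\,\delta$. Combining the three estimates through the triangle inequality yields
\[
\limsup_{k\to\oo}\left|\int_{\mathcal{D}} u^{\eps_k}v^{\eps_k}\Phi_{\eps_k}\,dx - \int_{\mathcal{D}\times\Om} u^0 v^0\,\Phi\,dx\,d\mu\right| \le \bigl(C+\|u^0\|_{L^2(\mathcal{D}\times\Om)}\bigr)\,\|\varphi\|_\oo\,\|b\|_\oo\,\delta ,
\]
and since $\delta>0$ is arbitrary the left-hand side vanishes, which is the assertion. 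The only genuinely delicate point is the auxiliary identity of the second paragraph; everything else is linear algebra of the decomposition plus Cauchy-Schwarz.
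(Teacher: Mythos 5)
Your proposal is correct and follows essentially the same route as the paper's proof: approximate $v^0$ by a tensor-product test function $\Psi$, use the norm convergence from strong two-scale convergence together with the mean-value property to show $\limsup_k\|v^{\eps_k}-\Psi_{\eps_k}\|_{L^2(\mathcal{D})}$ is small (your exact limit identity is a slightly sharper form of the paper's estimate $\int_{\mathcal{D}}(v^{\eps}-\Phi_{\eps})^2\,dx\leq 5\delta^2$), then split the product integral and treat the main term by weak two-scale convergence of $u^{\eps_k}$ and the remainder by Cauchy-Schwarz. The implicit $L^2(\mathcal{D})$-boundedness of $\{u^{\eps_k}\}$ that you flag is also assumed without comment in the paper's own proof, so this is not a divergence.
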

  
\begin{proof}
Let $\{u^{\eps}\}_{\eps>0}$, $\{v^{\eps}\}_{\eps>0}$, $u_0$, $v_0$ and $(\eps_k)$ satisfying the assumptions of the proposition. For simplicity, we drop the subscript $k$ in the proof below. 
Let $\delta>0$, there exists $\Phi\in  C_c^\oo(\mathcal{D}) \otimes C(\Om)$ such that 
\[
 \lt\|\Phi-v^0\rt\|_{L^2(\mathcal{D}\times\Omega)}^2 \leq \delta^2
\,.
\]
Since $v^\eps\overset{2}{\rightharpoonup} v^0$ with strong convergence, and using Proposition \ref{meanvalue}, there exists $\eps_\delta$ such that for $0<\eps<\eps_\delta$,
\begin{align*}
 \left|\int_{\mathcal{D}} \Phi(x,T_{x/\eps}\widetilde{\om})^2 dx-\|\Phi\|_{L^2(\mathcal{D}\times\Om)}^2\right| &\leq \delta^2 \,,\\
 \left|\int_{\mathcal{D}}v^{\eps}(x)\Phi(x,T_{x/\eps}\widetilde{\om})dx-\int_{\mathcal{D}}\int_{\Om}v^0(x,\om)\Phi(x,\om)\,dx\,d\mu(\om)\right|& \leq \delta^2\,,\\
 \left| \int_{\mathcal{D}} \left(v^\eps (x)\right)^2\,dx - \int_{\mathcal{D}\times \Om} \left(v^0(x,\om) \right)^2\,dx\,d\mu(\om)\right| &\leq \delta^2\,.
\end{align*}
Combining the preceding estimates leads to
\begin{equation}\label{strconv}
  \int_{\mathcal{D}} \left(v^{\eps}(x)-\Phi(x,T_{x/\eps}\widetilde{\om})\right)^2dx \leq 5\delta^2\,.
\end{equation}
Now, let $\varphi \in C_c^{\oo}(\mathcal{D})$ and $b \in C(\Om)$, we write for $\eps>0$, 
\begin{multline*}
  \int_{\mathcal{D}} u^{\eps}(x)v^{\eps}(x)\varphi(x)b(T_{x/\eps}\widetilde{\om})\,dx  = \int_{\mathcal{D}} u^{\eps}(x)\Phi(x,T_{x/\eps}\widetilde{\om})\varphi(x)b(T_{x/\eps}\widetilde{\om})\,dx \\
+ \int_{\mathcal{D}}u^{\eps}(x)\left(v^{ \eps}(x)-\Phi(x,T_{x/\eps}\widetilde{\om})\right)\varphi(x)b(T_{x/\eps}\widetilde{\om})\,dx \, .
\end{multline*}
Since $u^\eps\overset{2}{\rightharpoonup} u^0$, there exists $\eps_\delta'\in(0,\eps_\delta]$ such that for every $0<\eps<\eps'_\delta$,
\[
  \left|\int_{\mathcal{D}} u^{\eps}(x)\Phi(x,T_{x/\eps}\widetilde{\om})\varphi(x)b(T_{x/\eps}\widetilde{\om})\,dx -\int_{\mathcal{D}\times\Om}u^0\Phi\,\varphi\, b\,dx\,d\mu\right| \leq \delta \, .
\]
We obtain, for $0<\eps<\eps'_\delta$\,,
\begin{align*}
  \MoveEqLeft[7]\left|\int_{\mathcal{D}} u^{\eps}(x)v^{\eps}(x)\varphi(x)b(T_{x/\eps}\widetilde{\om})\,dx -\int_{\mathcal{D}\times\Om}u^0v^0\varphi\, b\,dx\,d\mu\right| \\
& \leq \delta + \left|\int_{\mathcal{D}\times\Omega}u^0\varphi b(\Phi-v^0)dxd\mu\right| 
\\&+ \left|\int_{\mathcal{D}}u^{\eps}(x)\left(v^{ \eps}(x)-\Phi(x,T_{x/\eps}\widetilde{\om})\right)\varphi(x)\,b(T_{x/\eps}\widetilde{\om})\,dx\right| \\
&\leq \delta\left(1+\|u^0\varphi b\|_{L^2(\mathcal{D}\times\Omega)}\right)\\&+\left|\int_{\mathcal{D}}u^{\eps}(x)\left(v^{ \eps}(x)-\Phi(x,T_{x/\eps}\widetilde{\om})\right)\varphi(x)\,b(T_{x/\eps}\widetilde{\om})\,dx\right|\, .
\end{align*}
To estimate the last term, we use the Cauchy-Schwarz inequality to get, 
\begin{multline*}
  \left|\int_{\mathcal{D}}u^{\eps}(x)\left(v^{ \eps}(x)-\Phi(x,T_{x/\eps}\widetilde{\om})\right)\varphi(x)b(T_{x/\eps}\widetilde{\om})dx  \right| \\
  \leq C \|u^{\eps}\|_{L^2(\mathcal{D})}\left(\int_{\mathcal{D}} \left(v^{\eps}(x)-\Phi(x,T_{x/\eps}\widetilde{\om})\right)^2dx \right)^{1/2}\leq  C'\delta,\end{multline*}
where we have used  the boundedness of $(u_{\eps})_{\eps}$ in $L^2(\mathcal{D})$ and~\eqref{strconv} to get the last inequality.
Consequently, for every $0<\eps<\eps_\delta'$,
\begin{equation*}
  \left|\int_{\mathcal{D}} u^{\eps}(x)v^{\eps}(x)\varphi(x)b(T_{x/\eps}\widetilde{\om})\,dx -\int_{\mathcal{D}\times\Om}u^0v^0\varphi\, b\,dx\, d\mu\right| \leq C''\delta\, .
\end{equation*}
This proves the proposition.
\end{proof}

 The key ingredient in the applications of two-scale convergence to homogenization problems, as introduced for the first time in~\cite{allaire1992homogenization}, is the use of a two-scale compactness theorem on $H^1(\mathcal{D})$.  This compactness is used to pass to the two-scale limit in the integral formulation of the equations for both the solution and its gradient. In order to extend such property to our setting, we first  need to define a space $H^1(\Om)$.
 
  \subsection{Construction of $H^1(\Om)$}
 
\begin{definition}
Let $u$ be a function of $L^2(\Om)$. We say that $u$ is differentiable at $\om$ in $\Om$ if, for every $i$ in $\{1,\ldots,d\}$, the limit
\begin{equation}\label{defderivative}
    \lim_{\delta \rightarrow 0} \frac{u(T_{\delta e_i}\om) -u(\om)}{\delta}=: (D_i u)(\om)
  \end{equation}
  exists, where $(e_1, ..., e_d)$ denotes the canonical basis of $\R^d$. In this case, we note $D_\om u=(D_1u,\cdots,D_du)$.
\end{definition}

\begin{lem}\label{lien_derivees}
Let $u$ be a function of $L^2(\Om)$, differentiable at every point of $\Om$. Then, for every $x$ in $\R^d$ and $i$ in $\{1,\ldots,d \}$,
\begin{equation*}
\frac{\partial}{\partial x_i} [u (T_x\om)] = (D_i u)(T_x\om) \, .
\end{equation*}
\end{lem}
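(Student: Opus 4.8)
The plan is to reduce the partial-derivative difference quotient on the left-hand side directly to the difference quotient that defines $D_i u$, exploiting the group law $T_{x+y}=T_x\circ T_y$ (with $T_0=\operatorname{Id}$) recorded earlier for the dynamical system. Fix $\om\in\Om$, $x\in\R^d$ and $i\in\{1,\ldots,d\}$, and regard $g(x):=u(T_x\om)$ as a function of $x$ alone. By definition of the partial derivative, I would write
\[
\frac{\partial}{\partial x_i}\bigl[u(T_x\om)\bigr]
= \lim_{\delta\rightarrow 0}\frac{u(T_{x+\delta e_i}\om)-u(T_x\om)}{\delta},
\]
assuming the limit exists, which is precisely what the computation below will establish.

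The key step is the rewriting of the shifted argument. Since $x+\delta e_i=\delta e_i+x$ and $\R^d$ is abelian, the group law gives $T_{x+\delta e_i}=T_{\delta e_i}\circ T_x$, hence $T_{x+\delta e_i}\om=T_{\delta e_i}(T_x\om)$. Setting $\om':=T_x\om$, the difference quotient becomes
\[
\frac{u(T_{x+\delta e_i}\om)-u(T_x\om)}{\delta}
=\frac{u(T_{\delta e_i}\om')-u(\om')}{\delta}.
\]
By the differentiability hypothesis applied at the point $\om'$ (note that $u$ is assumed differentiable at \emph{every} point of $\Om$, in particular at $T_x\om$), the right-hand side converges as $\delta\rightarrow 0$ to $(D_i u)(\om')=(D_i u)(T_x\om)$. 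Combining the two displays yields $\frac{\partial}{\partial x_i}[u(T_x\om)]=(D_i u)(T_x\om)$, which is the claim.

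There is no serious obstacle here: the statement is essentially a chain-rule identity that is made exact by the group structure of the translations, and the only points requiring a word of care are minor. First, the commutation $T_{x+\delta e_i}=T_{\delta e_i}\circ T_x$ must be invoked from the group law established before the statement. Second, because $u$ lives in $L^2(\Om)$ and is only defined up to $\mu$-negligible sets, one should work throughout with the fixed representative for which the pointwise limit \eqref{defderivative} exists at every $\om$; the differentiability assumption guarantees that the limit defining $(D_iu)(T_x\om)$ is genuine and independent of $\delta$-sequence, so no measurability or equivalence-class issue intervenes in the pointwise identity.
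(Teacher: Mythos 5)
Your proof is correct and is essentially identical to the paper's own argument: both write the partial derivative as a difference quotient, use the group law $T_{x+\delta e_i}\om = T_{\delta e_i}(T_x\om)$, and invoke the definition of $D_i u$ at the point $T_x\om$. Your additional remarks on representatives and the pointwise nature of the hypothesis are harmless elaborations of the same two-line computation.
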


\begin{proof}
For every $x$ in $\R^d$ and $i$ in $\{1,\ldots,d \}$,
\begin{align*}
  \frac{\partial}{\partial x_i} [u(T_x\om)] &= \lim_{\delta\to0}\frac{u(T_{x + \delta e_i}\om) - u(T_x \om)}{\delta} \\
  & =  \lim_{\delta\to0}\frac{u(T_{\delta e_i}(T_x\om)) - u(T_x \om)}{\delta} 
    = (D_i u)(T_x\om)\, .
\end{align*}
\end{proof}

  \begin{definition}
   Let $C^1(\Om)$ be the set of functions $u$ of $\Om$ that are continuous and differentiable at every $\om$ in $\Om$ and such that for every $i$ in $\{ 1, \ldots,d\}$, the function $D_i u$ is continuous on $\Om$.
  \end{definition}

  \begin{lem}\label{lemC1denseL2}
    $C^1(\Om)$ is dense in $L^2(\Om)$.\label{densite}
  \end{lem}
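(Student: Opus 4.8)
The plan is to exhibit, for a given $u\in L^2(\Om)$ and arbitrary $\eta>0$, an element of $C^1(\Om)$ that is $\eta$-close to $u$ in the $L^2(\Om)$ norm. The natural device is a \emph{mollification along the group action}: since $\R^d$ acts on $\Om$ by the measure-preserving translations $T_x$, I would define, for a smooth compactly supported kernel $\rho\in C_c^\oo(\R^d)$ with $\int_{\R^d}\rho=1$, the convolved function
\begin{equation*}
u_\rho(\om):=\int_{\R^d}\rho(x)\,u(T_x\om)\,dx\,.
\end{equation*}
The first step is to check this is well defined and lies in $L^2(\Om)$: by stationarity \eqref{eq_stationarity} each $\om\mapsto u(T_x\om)$ has the same $L^2$ norm as $u$, and Minkowski's integral inequality gives $\|u_\rho\|_{L^2(\Om)}\le \|\rho\|_{L^1}\|u\|_{L^2(\Om)}$.

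The second step is to prove $u_\rho\in C^1(\Om)$. For differentiability I would perform a change of variables inside the group: using $T_{x+\delta e_i}=T_{\delta e_i}T_x$ and the measure-invariance, one writes $u_\rho(T_{\delta e_i}\om)=\int_{\R^d}\rho(x)\,u(T_{x+\delta e_i}\om)\,dx=\int_{\R^d}\rho(x-\delta e_i)\,u(T_x\om)\,dx$, so the $\delta$-dependence is transferred entirely onto the smooth kernel. Differentiating under the integral sign then yields
\begin{equation*}
(D_i u_\rho)(\om)=-\int_{\R^d}(\partial_i\rho)(x)\,u(T_x\om)\,dx\,,
\end{equation*}
which is again a $\rho$-type average of $u$; the justification of differentiating under the integral uses that $\partial_i\rho$ is bounded with compact support together with the $L^1_{\mathrm{loc}}$ integrability of $x\mapsto u(T_x\om)$ established in the lemma preceding Birkhoff's theorem. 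Continuity of both $u_\rho$ and $D_iu_\rho$ on $\Om$ follows from assumption (H5'-c): the maps $x\mapsto u\circ T_x$ and $x\mapsto (\partial_i\rho\cdot u)\circ T_x$ are continuous from $\R^d$ into $L^1(\Om)$, and an integral against a fixed $L^\oo$ kernel of an $L^1(\Om)$-continuous family is continuous on $\Om$; I would phrase this so as to invoke (H5'-c) and Remark~\ref{remContinuityTranslation} directly rather than reprove continuity by hand.

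The third and final step is the approximation $u_\rho\to u$. Choosing $\rho=\rho_\sigma$ a standard approximate identity supported in the ball of radius $\sigma$, one estimates
\begin{equation*}
\|u_{\rho_\sigma}-u\|_{L^2(\Om)}=\Big\|\int_{\R^d}\rho_\sigma(x)\big(u\circ T_x-u\big)\,dx\Big\|_{L^2(\Om)}\le \int_{\R^d}\rho_\sigma(x)\,\|u\circ T_x-u\|_{L^2(\Om)}\,dx\,,
\end{equation*}
again by Minkowski. By the $L^2$-continuity of the translation action (Remark~\ref{remContinuityTranslation}, the $L^p$ version of Proposition~\ref{prop5}), the integrand $\|u\circ T_x-u\|_{L^2(\Om)}\to0$ as $x\to0$, so the right-hand side, being an average of this quantity over the shrinking support of $\rho_\sigma$, tends to $0$ as $\sigma\downarrow0$. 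This produces functions in $C^1(\Om)$ arbitrarily close to $u$ and establishes the density.

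I expect the main obstacle to be the rigorous verification that $u_\rho$ and its derivatives $D_iu_\rho$ are genuinely \emph{continuous} on $\Om$ (not merely measurable), since the definition of $C^1(\Om)$ demands pointwise continuity on the abstract space $\Om$; the clean way through is to reduce everything to the $L^1$- and $L^2$-continuity of the action supplied by (H5'-c) and Remark~\ref{remContinuityTranslation}, so that no direct argument on the topology of $\Om$ is needed beyond what those statements already encode. The differentiation-under-the-integral and the Minkowski estimates are routine once this continuity is in hand.
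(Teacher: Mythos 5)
Your mollification along the group action is exactly the device the paper itself uses, and your first and third steps (the bound $\|u_\rho\|_{L^2(\Om)}\le\|\rho\|_{L^1}\|u\|_{L^2(\Om)}$ via Minkowski and stationarity, and the convergence $u_{\rho_\sigma}\to u$ via Remark~\ref{remContinuityTranslation}) are sound. The genuine gap is your second step, and it is precisely the obstacle you flagged at the end: for a general $u\in L^2(\Om)$ the function $u_\rho$ need \emph{not} belong to $C(\Om)$, let alone $C^1(\Om)$, and the principle you invoke to get around this --- that ``an integral against a fixed $L^\infty$ kernel of an $L^1(\Om)$-continuous family is continuous on $\Om$'' --- is false. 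Hypothesis (H5'-c) and Remark~\ref{remContinuityTranslation} give \emph{norm} continuity of $x\mapsto u\circ T_x$ as a map from $\R^d$ into $L^1(\Om)$ (resp.\ $L^2(\Om)$); this is a statement about integrals over $\Om$ and carries no information about pointwise continuity in $\om$ of $\om\mapsto\int_{\R^d}\rho(x)u(T_x\om)\,dx$. Concretely, take $\Om=\mathbb{T}^2$ with $d=1$ acting on the first coordinate only, $T_x(\om_1,\om_2)=(\om_1+x,\om_2)$, and $u(\om_1,\om_2)=f(\om_2)$ with $f$ bounded, measurable and discontinuous: then $u\circ T_x=u$ for every $x$, so $x\mapsto u\circ T_x$ is constant (hence continuous into every $L^p(\Om)$), yet $u_\rho=u$ for any kernel $\rho$, and no continuity is gained. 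The structural reason is that the action supplies only $d$ directions of smoothing, and convolution along the orbits cannot create regularity transverse to them; ergodicity (absent from that toy example) does not repair this, since with a Liouville rotation number one can build ergodic flows on $\mathbb{T}^2$ and $u\in L^2(\mathbb{T}^2)$ with $u_\rho\notin C(\mathbb{T}^2)$. There is also the lesser, related defect that $u_\rho$ is only defined for $\mu$-a.e.\ $\om$ (where $x\mapsto u(T_x\om)$ is locally integrable), whereas membership in $C^1(\Om)$ requires continuity and differentiability at \emph{every} point of $\Om$.

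The fix is exactly the reduction with which the paper's proof begins and which your argument omits: since $\mu$ is a finite Borel measure on the compact metric space $\Om$, the space $C(\Om)$ is dense in $L^2(\Om)$ (see~\cite{rudin1964principles}, Theorem 3.14), so it suffices to approximate a \emph{continuous} $\varphi$ by elements of $C^1(\Om)$ in the $L^2$-norm. For continuous $\varphi$ the mollification $\varphi^\delta(\om)=\int_{\R^d}\rho^\delta(y)\varphi(T_{-y}\om)\,dy$ is defined at every point and its continuity on $\Om$ can be checked from the continuity and boundedness of $\varphi$ itself (not from (H5'-c), which is only used, as in your third step, to get $\|\varphi^\delta-\varphi\|_{L^2(\Om)}\to0$); your change-of-variables computation for $D_i\varphi^\delta$ then goes through verbatim. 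With this one extra reduction your argument coincides with the paper's proof.
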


  \begin{proof}
  Since $C(\Om)$ is dense in $L^2(\Om)$ (see~\cite[Theorem 3.14]{rudin1964principles}), it suffices to show that $C^1(\Om)$ is dense in $C(\Om)$ for the $L^2$-norm.\\
    Let $\rho\in  C^{\oo}_c(\R^d,\R_+)$ such that $\int_{\R^d} \rho=1$. We define a standard family of approximations of  unity $\{\rho^\delta\}_{\delta>0}$ as $\rho^\delta (x)=(1/\delta)^d\rho(x/\delta)$ for $x\in\R^d$,\ $\delta>0$. Now, let $\varphi$ be a function in $ C(\Om)$, we consider its mollifications $\{\varphi^\delta\}$ defined for $\delta>0$ and $\om\in \Om$ as,
  \[
 \varphi^{\delta}(\om):= \int_{\R^d}\rho^\delta\left(y\right)\varphi(T_{(-y)}\om)dy \,.
  \]
 We easily check that $\varphi^\delta\in C^1(\Om)$ with 
 \[
 D_i\varphi^\delta(\om)=\int_{\R^d}\partial_{x_i}\rho^\delta\left(y\right)\varphi(T_{(-y)}\om)dy \, .
 \]
Using assumption (H5'-c) (see also Remark~\ref{remContinuityTranslation}) it is also standard to check that we have 
  \[
  \|\varphi^\delta -\varphi\|_{L^2(\Om)}\ \overset{\delta \downarrow 0}\longrightarrow\ 0.
  \]
This proves the lemma.
  \end{proof}\

 \begin{lem}
 \label{lemipp}
 Let $w$ in $C^1(\Om)$, then $\E({D_i w})=0$ for $i$ in $\{1,\cdots,d\}$. In particular, for $u$,$v$ in $C^1(\Om)$, there holds
 \begin{equation}\label{ipp}
 \int_{\Om} D_i u\, v \, d\mu +\int_\Om u\, D_i v\, d\mu\, =\, 0.
 \end{equation}
 \end{lem}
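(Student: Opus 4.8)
The plan is to deduce the identity $\E(D_i w)=0$ directly from the translation invariance of $\mu$ recorded in~\eqref{eq_stationarity}, and then to obtain the integration by parts formula~\eqref{ipp} by applying this identity to the product $uv$.

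First I would fix $w\in C^1(\Om)$ and $i\in\{1,\ldots,d\}$ and examine, for $t\neq 0$, the integral of the difference quotient
\[
\int_\Om \frac{w(T_{t e_i}\om)-w(\om)}{t}\,d\mu(\om).
\]
Since~\eqref{eq_stationarity} says that each $T_x$ preserves $\mu$, the function $\om\mapsto w(T_{t e_i}\om)$ has the same integral as $w$, so the two contributions cancel and this integral vanishes identically in $t$. It therefore suffices to pass to the limit $t\to 0$ inside the integral: the left-hand side then converges to $\int_\Om (D_i w)\,d\mu=\E(D_i w)$ by the very definition~\eqref{defderivative} of $D_i w$, while the right-hand side is $0$.

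To justify this passage to the limit, I would invoke Lemma~\ref{lien_derivees}, which tells us that $s\mapsto w(T_{s e_i}\om)$ is continuously differentiable with derivative $(D_i w)(T_{s e_i}\om)$. This yields the representation
\[
\frac{w(T_{t e_i}\om)-w(\om)}{t}=\frac{1}{t}\int_0^t (D_i w)(T_{s e_i}\om)\,ds.
\]
Because $\Om$ is compact and $D_i w$ is continuous, $\|D_i w\|_\oo<\oo$, so the difference quotients are bounded by $\|D_i w\|_\oo$ uniformly in $t$ and $\om$; as $\mu$ is a finite measure, dominated convergence applies and gives $\E(D_i w)=0$. This interchange of limit and integral is the only genuinely delicate point; everything else is bookkeeping.

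Finally, for~\eqref{ipp} I would check that $uv\in C^1(\Om)$ whenever $u,v\in C^1(\Om)$, together with the product rule $D_i(uv)=(D_i u)\,v+u\,(D_i v)$. This is a routine consequence of~\eqref{defderivative}: writing $u(T_{\delta e_i}\om)v(T_{\delta e_i}\om)-u(\om)v(\om)=[u(T_{\delta e_i}\om)-u(\om)]\,v(T_{\delta e_i}\om)+u(\om)\,[v(T_{\delta e_i}\om)-v(\om)]$, dividing by $\delta$, and using the continuity of $v$ to pass to the limit; continuity of $D_i(uv)$ then follows from that of $u$, $v$, $D_i u$ and $D_i v$. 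Applying the first part to $w=uv$ gives $0=\E(D_i(uv))=\int_\Om D_i u\,v\,d\mu+\int_\Om u\,D_i v\,d\mu$, which is precisely~\eqref{ipp}.
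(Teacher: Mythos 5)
Your proof is correct, but it takes a genuinely different route from the paper's. The paper works along a fixed typical trajectory $\widetilde{\om}$ in $\R^d$: by Proposition~\ref{meanvalue} (a consequence of the Birkhov ergodic theorem), $\E(D_i w)=\lim_{\eps\downarrow 0}\int_{\R^d}\chi(x)\,D_iw(T_{x/\eps}\widetilde{\om})\,dx$ for a fixed $\chi\in C_c^\oo(\R^d)$ of unit integral; Lemma~\ref{lien_derivees} rewrites the integrand as $\eps\,\partial_{x_i}\lt[w(T_{x/\eps}\widetilde{\om})\rt]$, and an integration by parts in $\R^d$ throws the derivative onto $\chi$, producing a factor $\eps$ that kills the term in the limit (by Proposition~\ref{meanvalue} again). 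You instead argue intrinsically on $\Om$: the measure invariance~\eqref{eq_stationarity} makes the mean of every difference quotient vanish exactly, and dominated convergence concludes. Your route is more elementary --- it uses only stationarity, never ergodicity or the typical-trajectory machinery --- and it makes transparent that $\E(D_iw)=0$ is purely a consequence of the translation invariance of $\mu$; the paper's route has the merit of reusing tools already installed for the two-scale theory and of staying within its viewpoint of fixed realizations. One point in your write-up needs repair: you assert that $s\mapsto w(T_{se_i}\om)$ is \emph{continuously} differentiable, but continuity of $s\mapsto (D_iw)(T_{se_i}\om)$ would require pointwise continuity of the action $s\mapsto T_{se_i}\om$, which is not among the hypotheses (assumption (H5'-c) gives only continuity in the $L^1(\Om)$ sense). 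This is harmless for your argument: Lemma~\ref{lien_derivees} gives differentiability at \emph{every} $s$, and $\|D_iw\|_\oo<\oo$ since $D_iw$ is continuous on the compact space $\Om$, so the mean value theorem already yields the uniform bound $|w(T_{te_i}\om)-w(\om)|\leq \|D_iw\|_\oo\,|t|$ needed for domination (and a function differentiable everywhere with bounded derivative is Lipschitz, so your integral representation is valid as well). The same remark applies to your product rule: $v(T_{\delta e_i}\om)\to v(\om)$ follows not from the continuity of $v$ but from its differentiability at $\om$, which forces the increment to vanish. With these cosmetic fixes, your proof of~\eqref{ipp} coincides with the paper's second step, both reducing it to $\E(D_i(uv))=0$.
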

 \begin{proof}
 Let $w$ in $C^1(\Om)$ and let $\chi$ in $C^\oo_c(\R^d)$ such that $\int\chi \, dx=1$, we have by Proposition~\ref{meanvalue},
 \[
 \E({ D_iw }) = \lim_{\eps\downarrow0} \int_{\R^d} \chi(x)D_i w(T_{x/\eps}\widetilde{\om})dx.
 \]
 On the other hand, for $\eps>0$, we have, using Lemma~\ref{lien_derivees} and integrating by parts, 
\begin{align*}
  \int_{\R^d} \chi(x)D_i w(T_{x/\eps}\widetilde{\om})\, dx& = \eps \int_{\R^d} \chi(x)\dfrac \partial{\partial x_i} \lt[w(T_{x/\eps}\widetilde{\om})\rt]\,dx\\
  &=-\eps\int_{\R^d} \dfrac{\partial\chi}{\partial x_i}(x)\, w(T_{x/\eps}\widetilde{\om})\,dx.
 \end{align*}
 By Proposition~\ref{meanvalue} again, the last expression tends to 0 as $\eps$ goes to $0$ which proves the first part of the lemma.\\
Now if $u$, $v$ are two functions of $C^1(\Om)$, we easily see that $w:=uv$ is also in $C^1(\Om)$ with the product rule $D_i w=D_iu\, v+u\,D_iv$. The identity~\eqref{ipp} then follows from $\E({D_iw})=0$. 
\end{proof}

  We now define the weak derivatives of a function $u\in L^2(\Om)$ by duality.
  
 \begin{definition}
Let $u$ in $L^2(\Om)$, $i$ in $\{1,\cdots,d\}$ and $w_i$ in $L^2(\Om)$; we say that $u$ admits $w_i$ as weak derivative in the $i^{\text{th}}$ direction if
\[
-\int_\Om uD_iv\, d\mu =\int_\Om w_i v\,d\mu\quad\text{for every }v\in C^1(\Om).
\]
We note $D_iu=w_i$ the weak derivative (which is uniquely defined) in $L^2(\Om)$, noticing that both definitions of $D_i$ coincide for $C^1$ random variable.
If $u$ admits weak derivatives in $L^2(\Om)$ in all directions, we say that $u$ belongs to $H^1(\Om)$ and we note $D_\om u=(D_1u,\cdots,D_du)$.
Obviously, $H^1(\Om)$ is a vector space. We define an inner product on $H^1(\Om)$ as 
\[
\lt(u,v\rt)_{H^1(\Om)}:=(u,v)_{L^2(\Om)} +(D_\om u,D_\om v)_{L^2(\Om)^d}.
\]
  \end{definition}
%
%
%
\vskip 0.1 in

We easily see that $H^1(\Om)$ shares several properties with the usual Sobolev space $H^1(U)$ when $U$ is a bounded open set of $\R^N$. First, using the isometric embedding $j: u\in H^1(\Om)\mapsto (u,D_\om u)\in L^2(\Om)\times L^2(\Om,\R^d)$, the closed graph theorem implies that $j(H^1(\Om))$ is a closed subspace of $L^2(\Om)\times L^2(\Om,\R^d)$, hence $(H^1(\Om),(\cdot,\cdot)_{H^1(\Om)})$ is a Hilbert space. Moreover, using the mollifying procedure of Lemma~\ref{lemC1denseL2}, we may prove that $C^1(\Om)$ is dense in $H^1(\Om)$ and the imbedding 
\[
C^1(\Om)\hookrightarrow H^1(\Om)
\]
is continuous.
Eventually, notice that for $u$ in $L^2(\Om)$, if there exists $C\geq 0$ such that 
\begin{equation}
\label{caractH1}
\lt\|u(T_{\delta e_i}\cdot) -u\rt\|_{L^2(\Om)}\leq C\delta \quad \text{ for }\delta>0\text{ and }i\text{ in }\{1,\cdots,d\},
\end{equation}
then $u$ is in $H^1(\Om)$. This is a classical consequence of~\eqref{defderivative}, of the Lebesgue dominated convergence theorem and of the Riesz representation theorem. Gathering theses facts, we have:
\begin{prop}
$H^1(\Om)$ is a separable Hilbert space; its subspace $C^1(\Om)$ is dense. Moreover, a function $u$ in $L^2(\Om)$ belongs to $H^1(\Om)$ if and only if~\eqref{caractH1} holds true.
\end{prop}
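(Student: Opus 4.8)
The plan is to treat the three assertions in turn, leaning on the functional-analytic material already assembled.

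\emph{Hilbert space and separability.} First I would record that $(\cdot,\cdot)_{H^1(\Om)}$ makes $H^1(\Om)$ a pre-Hilbert space and that $j:u\mapsto(u,D_\om u)$ is, by construction, an isometry of $H^1(\Om)$ into $L^2(\Om)\times L^2(\Om,\R^d)$. Completeness then reduces to showing that the range $j(H^1(\Om))$ is closed. For this I would take $u_n\in H^1(\Om)$ with $u_n\to u$ in $L^2(\Om)$ and $D_\om u_n\to w=(w_1,\dots,w_d)$ in $L^2(\Om,\R^d)$, and pass to the limit in the defining identity $-\int_\Om u_n D_iv\,d\mu=\int_\Om (D_iu_n)v\,d\mu$, valid for every $v\in C^1(\Om)$; since $D_iv\in C(\Om)\subset L^2(\Om)$ is fixed, both sides converge and yield $-\int_\Om uD_iv\,d\mu=\int_\Om w_iv\,d\mu$, i.e. $u\in H^1(\Om)$ with $D_\om u=w$. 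Hence $j(H^1(\Om))$ is a closed subspace of a Hilbert space, so $(H^1(\Om),(\cdot,\cdot)_{H^1(\Om)})$ is complete. Separability is immediate: $\Om$ being compact metric, $C(\Om)$ is separable and dense in $L^2(\Om)$, so $L^2(\Om)\times L^2(\Om,\R^d)$ is separable; a metric subspace of a separable space is separable, and $H^1(\Om)$ is isometric to $j(H^1(\Om))$.

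\emph{Density of $C^1(\Om)$.} Here I would reuse the mollification of Lemma~\ref{lemC1denseL2}: for $u\in H^1(\Om)$ set $u^\delta(\om)=\int_{\R^d}\rho^\delta(y)\,u(T_{-y}\om)\,dy$. Exactly as in that lemma, $u^\delta\in C^1(\Om)$. The key algebraic fact is the commutation $D_iu^\delta=(D_iu)^\delta$: testing $u^\delta$ against $v\in C^1(\Om)$, using Fubini, the change of variables $\om\mapsto T_{-y}\om$ with the invariance \eqref{eq_stationarity}, and the identity $D_i(v\circ T_y)=(D_iv)\circ T_y$, I would move the derivative from $v$ onto $u$ via the weak-derivative definition to get $-\int_\Om u^\delta D_iv\,d\mu=\int_\Om (D_iu)^\delta v\,d\mu$. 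Granting this, $u^\delta\to u$ and $D_iu^\delta=(D_iu)^\delta\to D_iu$ in $L^2(\Om)$ follow from the mollification estimate $\|w^\delta-w\|_{L^2(\Om)}\le\int\rho^\delta(y)\|w\circ T_{-y}-w\|_{L^2(\Om)}\,dy$ together with the $L^2$-continuity of translations (Remark~\ref{remContinuityTranslation}); thus $u^\delta\to u$ in $H^1(\Om)$. The continuity of the embedding $C^1(\Om)\hookrightarrow H^1(\Om)$ is then tautological from the two norms.

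\emph{Characterization~\eqref{caractH1}.} For the direct implication I would use the density just proved: if $u\in C^1(\Om)$ then, by Lemma~\ref{lien_derivees}, $s\mapsto u(T_{se_i}\om)$ is $C^1$ with derivative $(D_iu)(T_{se_i}\om)$, so $u(T_{\delta e_i}\om)-u(\om)=\int_0^\delta (D_iu)(T_{se_i}\om)\,ds$; Minkowski's integral inequality and the isometry $\|(D_iu)\circ T_{se_i}\|_{L^2(\Om)}=\|D_iu\|_{L^2(\Om)}$ give $\|u(T_{\delta e_i}\cdot)-u\|_{L^2(\Om)}\le\delta\|D_iu\|_{L^2(\Om)}$, and this bound passes to general $u\in H^1(\Om)$ by density since $w\mapsto\|w\circ T_{\delta e_i}-w\|_{L^2}$ is $L^2$-continuous; so \eqref{caractH1} holds with $C=\max_i\|D_iu\|_{L^2(\Om)}$. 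For the converse I would start from \eqref{caractH1}, observe that the difference quotients $\delta^{-1}(u(T_{\delta e_i}\cdot)-u)$ are bounded in the Hilbert space $L^2(\Om)$, extract a weakly convergent subsequence with limit $w_i$, and identify $w_i=D_iu$: for $v\in C^1(\Om)$, the invariance \eqref{eq_stationarity} gives $\int_\Om \delta^{-1}(u(T_{\delta e_i}\cdot)-u)\,v\,d\mu=\int_\Om u\,\delta^{-1}(v(T_{-\delta e_i}\cdot)-v)\,d\mu$, and as $\delta\downarrow0$ the right-hand side tends to $-\int_\Om u\,D_iv\,d\mu$ because $\delta^{-1}(v\circ T_{-\delta e_i}-v)\to-D_iv$ in $L^2(\Om)$ (again Remark~\ref{remContinuityTranslation}), while the left-hand side tends to $\int_\Om w_iv\,d\mu$ along the subsequence; hence $w_i$ is the weak derivative and $u\in H^1(\Om)$.

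\emph{Main obstacle.} The genuinely delicate step is the density argument: verifying both that the mollified functions $u^\delta$ are truly elements of $C^1(\Om)$ — continuous on the abstract compact space $\Om$ with continuous derivatives — and that differentiation commutes with mollification. Both hinge on the continuous-translation hypothesis \emph{(H5'-c)} (equivalently Remark~\ref{remContinuityTranslation}) and on the measure-invariance~\eqref{eq_stationarity}; once these are secured, the remaining completeness, separability and difference-quotient verifications are routine Hilbert-space and mollification bookkeeping.
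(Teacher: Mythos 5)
Your completeness/separability argument and your treatment of the characterization \eqref{caractH1} are correct and essentially the paper's route (closedness of the weak-derivative relation through the isometric embedding $j$ for the Hilbert structure; difference quotients, the invariance \eqref{eq_stationarity} and weak compactness --- the paper phrases this last tool via the Riesz representation theorem --- for the equivalence). The genuine gap is in the density step, at exactly the point you flag and then wave off: for $u\in H^1(\Om)$ the mollification $u^\delta(\om)=\int\rho^\delta(y)u(T_{-y}\om)\,dy$ does \emph{not} lie in $C^1(\Om)$ in general, and the verification in Lemma~\ref{lemC1denseL2} does not carry over ``exactly as in that lemma''. That verification uses the continuity and boundedness of $\varphi$ on $\Om$ in an essential way: these are what make $\varphi^\delta$ and $D_i\varphi^\delta$ continuous on the abstract compact space $\Om$ and make the difference quotients converge at \emph{every} point. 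Membership in $C^1(\Om)$ requires continuity in $\om$ on all of $\Om$, whereas mollification only regularizes along the orbit $\{T_x\om\}_{x\in\R^d}$, typically a $\mu$-null set; it produces no regularity transverse to the orbits, and neither (H5'-c) nor \eqref{eq_stationarity} can create any. A concrete failure already occurs for the Kronecker flow on the two-dimensional torus ($d=1$, $T_t\om=\om+t(1,\alpha)$ with $\alpha$ irrational, a legitimate instance of (H1)--(H5')): taking $u$ with spectrum on a lacunary (Sidon) set of frequencies $(-p_n,q_n)$ built from the convergents of $\alpha$, so that the directional frequencies $q_n\alpha-p_n$ tend to $0$ rapidly, and with coefficients in $\ell^2\setminus\ell^1$, one gets $u\in H^1(\Om)$ while no $u^\delta$ is a.e.\ equal to a continuous function, since its Fourier coefficients on that Sidon set remain non-summable. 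So the one-step mollification of $u$ itself cannot prove density.

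The repair --- and, presumably, what the paper means by ``the mollifying procedure of Lemma~\ref{lemC1denseL2}'' --- is a two-step diagonal approximation in which one mollifies continuous approximants of $u$, not $u$ itself. Take $\varphi_k\in C(\Om)$ with $\varphi_k\to u$ in $L^2(\Om)$; then $\varphi_k^\delta\in C^1(\Om)$ by that lemma, with $D_i\varphi_k^\delta(\om)=\int\partial_{x_i}\rho^\delta(y)\varphi_k(T_{-y}\om)\,dy$. Your commutation identity enters here, but applied to $u$: for $u\in H^1(\Om)$ one has $\int\partial_{x_i}\rho^\delta(y)u(T_{-y}\om)\,dy=(D_iu)^\delta(\om)$ for a.e.\ $\om$ (by your duality computation, or by integrating by parts along orbits via Lemma~\ref{grad}). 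Then Minkowski's integral inequality and the invariance give
\[
\|\varphi_k^\delta-u^\delta\|_{L^2(\Om)}\le\|\varphi_k-u\|_{L^2(\Om)},
\qquad
\|D_i\varphi_k^\delta-(D_iu)^\delta\|_{L^2(\Om)}\le\|\partial_{x_i}\rho^\delta\|_{L^1(\R^d)}\|\varphi_k-u\|_{L^2(\Om)},
\]
while $u^\delta\to u$ and $(D_iu)^\delta\to D_iu$ in $L^2(\Om)$ as $\delta\downarrow0$ by Remark~\ref{remContinuityTranslation}; choosing $k(\delta)$ so that $\bigl(1+\|\nabla\rho^\delta\|_{L^1}\bigr)\|\varphi_{k(\delta)}-u\|_{L^2}\to0$ gives $\varphi_{k(\delta)}^\delta\to u$ in $H^1(\Om)$. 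Finally, note that the forward implication of your third part quotes the density result, so the gap propagates there as written; that implication can instead be obtained directly from Lemma~\ref{grad} (absolute continuity of $s\mapsto u(T_{se_i}\om)$ along lines), which makes it independent of the density step.
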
  
  \begin{lem}  \label{grad}
   Let $u$ be a function in $H^1(\Om)$ and let us denote by $v_{\om}: x \mapsto u(T_x \om)$ and $z_{\om}: x \mapsto (D_{\om} u )(T_x\om)$.  Then, $\mu$-almost surely, $v_{\om}$ belongs to $H^1_{\text{loc}}(\R^d)$ and $z_{\om} = \nabla_x v_{\om}$ almost everywhere on $\R^d$.
  \end{lem}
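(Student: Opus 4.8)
The plan is to first settle the statement for smooth random variables and then to pass to the limit by density. For $u$ in $C^1(\Om)$, Lemma~\ref{lien_derivees} already gives the classical identity $\partial_{x_i}[u(T_x\om)] = (D_i u)(T_x\om)$ for every $\om$ and every $x$, so that $v_\om$ is a genuine $C^1$ function with $\nabla_x v_\om = z_\om$ pointwise; in particular the conclusion holds for every $\om$ in this case. For general $u$ in $H^1(\Om)$, I would pick a sequence $(u_n)$ in $C^1(\Om)$ with $u_n\to u$ in $H^1(\Om)$, that is $u_n\to u$ in $L^2(\Om)$ and $D_\om u_n\to D_\om u$ in $L^2(\Om)^d$, which is available by the density of $C^1(\Om)$ in $H^1(\Om)$ established above. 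Recall also that $v_\om$ and $z_\om$ are well-defined elements of $L^2_{\mathrm{loc}}(\R^d)$ for $\mu$-almost every $\om$, by the lemma preceding Theorem~\ref{birkhov} applied with $p=2$.

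The bridge between convergence in $L^2(\Om)$ and convergence of the realizations $x\mapsto(\cdot)(T_x\om)$ is the stationarity identity used just before Theorem~\ref{birkhov}: for any $\Phi$ in $L^2(\Om)$ and any bounded Borel set $A\subset\R^d$, Fubini and stationarity give $\int_\Om\int_A|\Phi(T_x\om)|^2\,dx\,d\mu(\om)=|A|\,\|\Phi\|_{L^2(\Om)}^2$. Applying this to $\Phi=u_n-u$ and to $\Phi=D_iu_n-D_iu$ with $A=B_m$ the ball of radius $m$, I obtain that $x\mapsto u_n(T_x\cdot)$ and $x\mapsto(D_\om u_n)(T_x\cdot)$ converge in $L^2(\Om\times B_m)$ towards $v_\cdot$ and $z_\cdot$ respectively, the limits being correctly identified precisely because the identity is exact. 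By Fubini these convergences hold in $L^2(B_m)$ for $\mu$-almost every $\om$ along a subsequence, and a diagonal extraction over $m\in\N$ produces a single subsequence (still denoted $u_n$) together with a full-measure set of $\om$ for which $u_n(T_\cdot\om)\to v_\om$ and $(D_\om u_n)(T_\cdot\om)\to z_\om$ in $L^2_{\mathrm{loc}}(\R^d)$. For each such $\om$ I then pass to the limit in the weak formulation $\int_{\R^d}u_n(T_x\om)\,\partial_{x_i}\psi(x)\,dx=-\int_{\R^d}(D_iu_n)(T_x\om)\,\psi(x)\,dx$, valid for $\psi$ in $C_c^{\oo}(\R^d)$ since $u_n\in C^1(\Om)$. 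The compact support of $\psi$ makes local $L^2$ convergence sufficient, yielding $\int v_\om\,\partial_{x_i}\psi=-\int z_{\om,i}\,\psi$, i.e. $\partial_{x_i}v_\om=z_{\om,i}$ in the distributional sense with $z_{\om,i}\in L^2_{\mathrm{loc}}(\R^d)$. Hence $v_\om\in H^1_{\mathrm{loc}}(\R^d)$ with $\nabla_x v_\om=z_\om$, for $\mu$-almost every $\om$.

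I expect the only delicate point to be the bookkeeping of the almost-sure statement: the exceptional null set a priori depends on the compact set $B_m$ and on the chosen subsequence, and one must also ensure that the $L^2(B_m)$ limits genuinely coincide with the prescribed functions $v_\om=u(T_\cdot\om)$ and $z_\om=(D_\om u)(T_\cdot\om)$, rather than with some abstract accumulation point. Both difficulties are resolved simultaneously by first working in $L^2(\Om\times B_m)$, where the limits are pinned down by the stationarity identity, and only afterwards transferring to almost-sure $L^2_{\mathrm{loc}}$ convergence through Fubini and the diagonal extraction. Everything else is routine.
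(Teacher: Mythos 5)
Your proof is correct and follows essentially the same route as the paper's: density of $C^1(\Om)$ in $H^1(\Om)$, the stationarity identity $\int_\Om\int_A|\Phi(T_x\om)|^2\,dx\,d\mu = |A|\,\|\Phi\|_{L^2(\Om)}^2$ to pin down the limits in $L^2(\Om\times A)$, then Fubini plus a diagonal extraction to get almost-sure $L^2_{\text{loc}}$ convergence of both $u_n(T_\cdot\om)$ and $(D_\om u_n)(T_\cdot\om)$, and finally Lemma~\ref{lien_derivees} to identify the gradient. The only difference is cosmetic: you spell out the final passage to the limit in the distributional formulation $\int v_\om\,\partial_{x_i}\psi = -\int z_{\om,i}\,\psi$, which the paper leaves implicit.
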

  
  \begin{proof}
  Let us proceed by density. Let $(u_k)_k$ be a sequence in  $(C^1(\Om))^{\N}$ such that $u_k$ converges to $u$, and $D_{\om} u_k$ converges to $D_{\om}u$ in $L^2(\Om)$.

  For every  bounded Borel set $A$ of $\R^d$,
  \begin{equation*}
    \int_{\Om} \int_A \left(u_k(T_x\om)-v_{\om}(x)\right)^2\,dx\,d\mu(\om) = |A|\int_{\Om} \left(u_k(\om)-u(\om)\right)^2d\mu(\om) \, ,
    \end{equation*}
    because of stationarity. The right-hand side term converges to $0$ as $k$ goes to infinity by definition of $(u_k)_k$. As a consequence, thanks to a diagonal argument, 
    and up to extraction, $u_k(T_x\om)$ converges to $v_{\om}$ in $L^2_{\text{loc}}(\R^d)$, almost surely on $\Om$. \\
   We have similarly for $i$ in $\{1,\ldots,d \}$,
\[
   \int_\Om \int_A \left(\frac{\partial}{\partial x_i}u_k(T_x\om)-z_{\om,i}(x)\right)^2\,dx\,d\mu(\om) =|A|\int_{\Om} \left(D_iu_k(\om)-D_iu(\om)\right)^2d\mu(\om) \, ,
\]
   according to Lemma~\ref{lien_derivees}, and the stationarity assumption. The right-hand side converges to $0$ as $k$ tends to infinity. Then, up to extraction again, $\frac{\partial}{\partial x_i}u_k(T_x\om)$ converges to $z_{\om,i}$ in $ L^2_{\text{loc}}(\R^d)$ almost surely in $\om$, and therefore,  $v_{\om}$ is in $ H^1_{\text{loc}}(\R^d)$ and $\frac{\partial}{\partial x_i} v_{ \om}=z_{\om,i}$  almost everywhere on $\R^d$.
  \end{proof}

  \begin{prop}\label{propergodic}
  For every $ u$ in $H^1(\Om)$,
      \begin{equation*}
       D_{\om} u \equiv 0 \implies u  \text{ is constant $\mu$-a.s.} 
      \end{equation*}
    The measure $\mu$ is said to be \emph{ergodic} with respect to translations.
  \end{prop}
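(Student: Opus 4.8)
The plan is to transfer the vanishing of the abstract gradient $D_\om u$ to the vanishing of the classical gradient of the realizations $x\mapsto u(T_x\om)$, and then to invoke the ergodicity assumption (H3).

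First I would apply Lemma~\ref{grad} to $u$: for $\mu$-almost every $\om$, the map $v_\om:x\mapsto u(T_x\om)$ belongs to $H^1_{\text{loc}}(\R^d)$ and $\nabla_x v_\om(x)=(D_\om u)(T_x\om)$ for a.e.\ $x$. Since $D_\om u\equiv 0$ by hypothesis, this gives $\nabla_x v_\om=0$ a.e.\ on $\R^d$, $\mu$-a.s. Because $\R^d$ is connected, a function of $H^1_{\text{loc}}(\R^d)$ with vanishing gradient is a.e.\ equal to a constant; hence for $\mu$-a.e.\ $\om$ there is a real number $c(\om)$ with $u(T_x\om)=c(\om)$ for a.e.\ $x\in\R^d$. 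Choosing a fixed ball $B$ and setting $c(\om):=\tfrac{1}{|B|}\int_B u(T_x\om)\,dx$ makes $c$ a genuine measurable function of $\om$ (by (H1) and Fubini), which coincides with the a.e.\ constant value of $v_\om$.

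Next I would show that $c$ is $T$-invariant. Fix $y\in\R^d$. On one hand, translating the variable $x$ shows that $x\mapsto u(T_{x+y}\om)$ is again a.e.\ equal to $c(\om)$, for $\mu$-a.e.\ $\om$. On the other hand, applying the previous step to $T_y\om$ in place of $\om$ — which is licit for $\mu$-a.e.\ $\om$ since $T_y$ preserves $\mu$ by~\eqref{eq_stationarity} — yields that $x\mapsto u\big(T_x(T_y\om)\big)=u(T_{x+y}\om)$ is a.e.\ equal to $c(T_y\om)$. Comparing the two, $c(T_y\om)=c(\om)$ for $\mu$-a.e.\ $\om$, i.e.\ $c\circ T_y=c$ $\mu$-a.s.\ for every fixed $y$. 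Consequently each superlevel set $\{c>t\}$ satisfies $T_y^{-1}\{c>t\}=\{c>t\}$ up to a $\mu$-null set for every $y$, so it is a translation-invariant event. By the ergodicity assumption (H3), $\mu\{c>t\}\in\{0,1\}$ for every $t\in\R$; since $t\mapsto\mu\{c>t\}$ is non-increasing, $c$ equals some constant $c_0$ $\mu$-almost surely.

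Finally I would recover $u=c_0$. Since $u(T_x\om)=c(\om)=c_0$ for a.e.\ $x$ and $\mu$-a.e.\ $\om$, Fubini on a bounded set $A$ with $|A|>0$, together with the measure-preserving property of the translations, gives
\[
|A|\int_\Om |u(\om)-c_0|^2\,d\mu(\om)=\int_A\!\int_\Om |u(T_x\om)-c_0|^2\,d\mu(\om)\,dx=0,
\]
whence $u=c_0$ $\mu$-a.s., as claimed. The only delicate point is the bookkeeping of null sets in the invariance argument for $c$, namely the passage from the Fubini-type statement \enquote{for a.e.\ $x$, $c\circ T_x=c$ a.e.} to genuine $T$-invariance of the level sets to which (H3) applies; everything else is a direct application of Lemma~\ref{grad}, the connectedness of $\R^d$, and the stationarity~\eqref{eq_stationarity}.
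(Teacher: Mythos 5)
Your proof is correct, and its second half takes a genuinely different route from the paper's. Both arguments share the first step: apply Lemma~\ref{grad} to get $\nabla_x\bigl[u(T_x\om)\bigr]=(D_\om u)(T_x\om)=0$, hence the realization $x\mapsto u(T_x\om)$ is a.e.\ constant, $\mu$-a.s. From there the paper simply writes $u(\om)=\frac{1}{t^d}\int_{[0,t]^d}u(T_x\om)\,dx$ for all $t$ and invokes the Birkhoff ergodic theorem (Theorem~\ref{birkhov}) to send $t\to\infty$, identifying the constant as $\E(u)$. You instead avoid the ergodic theorem entirely: you build the measurable constant $c(\om)$ as a ball average, prove $c\circ T_y=c$ $\mu$-a.s.\ for each $y$ via stationarity~\eqref{eq_stationarity}, feed the superlevel sets of $c$ into the ergodicity hypothesis (H3), and recover $u=c_0$ by Fubini. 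What each approach buys: the paper's argument is shorter (given that Theorem~\ref{birkhov} is already on record) and yields the explicit value $u=\E(u)$; yours is more elementary and self-contained, using only the definition of ergodicity rather than the ergodic theorem, and it handles more carefully a point the paper glosses over --- namely the identification of $u(\om)$ with the a.e.\ constant value of its realization, which requires exactly the null-set bookkeeping (invariance of $c$ plus measure preservation, or your final Fubini computation) that you make explicit.
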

  \begin{proof}
  Let $u\in H^1(\Om)$ be such that $D_{\om}u\equiv 0$. Then, according to Lemma~\ref{grad}, $\mu$-a.s., a.e. in $ x \in \R^n$,$ \nabla_x u(T_x\om)=0$ so as a function of $L^2_{\text{loc}}(\R^d)$, $ x\mapsto u(T_x\om)$ is constant. As a consequence, $\mu$-a.s., for every $t$ in $\R^d$,
  \begin{equation*}
  u(\om)=\frac{1}{t^d}\int_{[0,t]^d} u(T_x\om)dx  \, ,
  \end{equation*}
  and according to Birkhov theorem, the right-hand side converges $\mu$-a.s. to $\E(u)$ as $t$ goes to infinity, therefore $u$ is constant $ \mu$-a.s.
  \end{proof}

  Now we define spaces that will be used in the definition of the effective matrix for the  homogenized problems in the sequel.
  
  \begin{definition}
     We denote by  $L^2_{\text{pot}}(\Om)$  the closure of the set $\{ D_{\om} u : u \in C^1(\Om)\}$ in $L^2(\Om,\R^d)$, and $L^2_{\text{sol}}(\Om) = L^2_{\text{pot}}(\Om)^\perp$. We also denote by $L^2_{\text{pot,loc}}(\R^d) $ the closure of $\{ \nabla u : u \in C^\oo_c(\R^d)\}$ in $\ L^2_{\text{loc}}(\R^d,\R^d)$, and $L^2_{\text{sol,loc}}(\R^d) $ as the closure of $\{\sigma\in C^\oo_c(\R^d,\R^d):\operatorname{div }\sigma\equiv 0\}$  in $\ L^2_{\text{loc}}(\R^d,\R^d)$.
  \end{definition}
  
  \begin{definition}
      Let $\sigma$ be a random variable in $L^2(\Om,\R^d)$. We say that $\sigma$ is in $H_{\text{div}}(\Om)$ if there exists $g$ in $L^2(\Om)$ such that
      \begin{equation*}
      \forall u \in C^1(\Om),\;  \int_{\Om} \sigma\cdot D_{\om} u\,d\mu(\om) = -\int_{\Om} gu\,d\mu(\om)\,.
      \end{equation*}
      In that case, we denote by $\operatorname{div}_{\om}\sigma$ the function $g$, and call it the divergence of $\sigma$.
  \end{definition}
  \begin{rem}  \label{propdiv}     
   It is clear that $H^1(\Om,\R^d)\subset H_{\text{div}}(\Om)$ and that for $\sigma$ in $H^1(\Om,\R^d)$,
    \begin{equation*}
    \operatorname{div}_{\om} \sigma  = \sum_{i=1}^d D_i \sigma_i\, ,
    \end{equation*}
 where $\sigma_i$, $i=1,..., d$, denote the marginals of $\sigma$.
Moreover, the definition of $L^2_{\text{sol}}(\Om)$ is equivalent to 
\[
L^2_{\text{sol}}(\Om)=\lt\{\sigma\in H_{\text{div}}(\Om) : \operatorname{div}_\om \sigma\equiv 0\rt\}.
\]
  \end{rem}
    
    
   \begin{thm}  \label{l2sol}
  Let $\sigma$ be a random variable in $L^2_{\text{sol}}(\Om)$. Then, $\mu$-almost surely, the function $x\mapsto \sigma(T_x\om)$ is in $L^2_{\text{sol,loc}}(\R^d)$.
  \end{thm}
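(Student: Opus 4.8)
The plan is to reduce to the deterministic statement by approximating $\sigma$ in $L^2(\Om,\R^d)$ by $C^1$ \emph{solenoidal} fields, whose realizations are genuinely $C^1$ and pointwise divergence free, and then to transfer the convergence to realizations by stationarity exactly as in Lemma~\ref{grad}. To build the approximations, I mollify on $\Om$ as in Lemma~\ref{lemC1denseL2}: with $\{\rho^\delta\}$ a standard approximate identity on $\R^d$, set
\[
\sigma^\delta(\om):=\int_{\R^d}\rho^\delta(y)\,\sigma(T_{-y}\om)\,dy .
\]
The same computation as in Lemma~\ref{lemC1denseL2} shows $\sigma^\delta\in C^1(\Om,\R^d)$, with $D_i\sigma^\delta(\om)=\int_{\R^d}\partial_{x_i}\rho^\delta(y)\,\sigma(T_{-y}\om)\,dy$, and that $\sigma^\delta\to\sigma$ in $L^2(\Om,\R^d)$ as $\delta\downarrow0$.

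The crucial point is that $\sigma^\delta$ remains solenoidal, which rests on the translation invariance of $L^2_{\mathrm{pot}}(\Om)$. For $u\in C^1(\Om)$ and $y\in\R^d$, put $u_y:=u(T_y\cdot)$. Since $T_{\delta e_i}T_y=T_yT_{\delta e_i}$ and $T_y$ is measure preserving by~\eqref{eq_stationarity}, the characterization~\eqref{caractH1} gives $u_y\in H^1(\Om)$, and passing to the limit in the difference quotients (they are bounded in $L^2(\Om)$ because $u\in C^1(\Om)$) yields $D_\om u_y=(D_\om u)(T_y\cdot)$; hence $(D_\om u)(T_y\cdot)\in L^2_{\mathrm{pot}}(\Om)$. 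Using Fubini (legitimate since $\rho^\delta$ has compact support, $\sigma\in L^2$ and $D_\om u$ is bounded) and the substitution $\om=T_y\om'$,
\[
\int_\Om \sigma^\delta\cdot D_\om u\,d\mu
=\int_{\R^d}\rho^\delta(y)\left(\int_\Om \sigma(\om')\cdot (D_\om u)(T_y\om')\,d\mu(\om')\right)dy=0,
\]
because $\sigma\in L^2_{\mathrm{sol}}(\Om)=L^2_{\mathrm{pot}}(\Om)^\perp$. By Remark~\ref{propdiv} this means $\operatorname{div}_\om\sigma^\delta=\sum_i D_i\sigma^\delta_i=0$ in $L^2(\Om)$, so $\sigma^\delta$ is a $C^1$ solenoidal field.

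Next I handle the realizations. By Lemma~\ref{grad} (or directly Lemma~\ref{lien_derivees}), for $\mu$-a.e.\ $\om$ the map $x\mapsto\sigma^\delta(T_x\om)$ is $C^1$ with $\partial_{x_i}[\sigma^\delta_j(T_x\om)]=(D_i\sigma^\delta_j)(T_x\om)$, hence has spatial divergence $(\operatorname{div}_\om\sigma^\delta)(T_x\om)=0$; by stationarity this holds for a.e.\ $x$, $\mu$-a.s. It then remains to invoke the purely deterministic fact that, for $d\ge2$, any $w\in L^2_{\mathrm{loc}}(\R^d,\R^d)$ with $\operatorname{div}w=0$ belongs to $L^2_{\mathrm{sol,loc}}(\R^d)$: mollifying reduces to $w$ smooth; cutting off with $\chi\equiv1$ on $B_R$, $\operatorname{supp}\chi\subset B_{2R}$, the field $\chi w$ has divergence $\nabla\chi\cdot w$ supported in the annulus with zero mean, which can be corrected by solving $\operatorname{div}\phi=\nabla\chi\cdot w$ with $\phi$ compactly supported in the annulus; then $\chi w-\phi\in C^\oo_c(\R^d,\R^d)$ is divergence free and equals $w$ on $B_R$, and letting $R\to\oo$ shows $w\in L^2_{\mathrm{sol,loc}}(\R^d)$.

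Finally I conclude by transfer. For each ball $B_m$, stationarity gives $\int_\Om\|\sigma^{\delta}(T_\cdot\om)-\sigma(T_\cdot\om)\|_{L^2(B_m)}^2\,d\mu=|B_m|\,\|\sigma^\delta-\sigma\|_{L^2(\Om,\R^d)}^2\to0$, so a diagonal extraction over $m$ produces a sequence $\delta_k\downarrow0$ and a full-measure set of $\om$ for which $\sigma^{\delta_k}(T_\cdot\om)\to\sigma(T_\cdot\om)$ in $L^2_{\mathrm{loc}}(\R^d)$. Each approximant lies in $L^2_{\mathrm{sol,loc}}(\R^d)$ by the previous step, and $L^2_{\mathrm{sol,loc}}(\R^d)$ is closed in $L^2_{\mathrm{loc}}(\R^d)$ by definition, so the limit $x\mapsto\sigma(T_x\om)$ lies in $L^2_{\mathrm{sol,loc}}(\R^d)$ for $\mu$-a.e.\ $\om$. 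I expect the main obstacle to be twofold: the translation invariance of $L^2_{\mathrm{pot}}(\Om)$ that underlies the preservation of solenoidality (the genuinely stochastic ingredient, needing $u_y\in H^1(\Om)$ with $D_\om u_y=(D_\om u)(T_y\cdot)$), and the deterministic localization of divergence-free fields, which truly requires solving a divergence equation and which, notably, fails for $d=1$.
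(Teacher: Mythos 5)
Your proof is correct and is, in essence, the paper's own argument written out in full: the paper's two-line proof consists of exactly your two ingredients, namely mollification on $\Om$ as in Lemma~\ref{lemC1denseL2} (the paper phrases this as density of $C^1(\Om)$ in $H_{\text{div}}(\Om)$ for the norm $\|\sigma\|_{L^2(\Om)}^2+\|\operatorname{div}_\om\sigma\|_{L^2(\Om)}^2$, whereas you keep the approximants exactly solenoidal --- the same computation, since mollification commutes with $\operatorname{div}_\om$ precisely by the translation invariance of $L^2_{\text{pot}}(\Om)$ that you establish) and the stationarity/diagonal-extraction transfer of Lemma~\ref{grad}. The one place where you genuinely add something is the final deterministic step, and it fills a real gap in the paper's write-up: proceeding ``as in Lemma~\ref{grad}'' only exhibits $x\mapsto\sigma(T_x\om)$ as an $L^2_{\text{loc}}$-limit of $C^1$ divergence-free fields, hence shows it is divergence-free in the distributional sense; since the paper defines $L^2_{\text{sol,loc}}(\R^d)$ as the closure in $L^2_{\text{loc}}$ of \emph{compactly supported} smooth divergence-free fields, one still needs your cut-off and divergence-correction argument to conclude membership in that closure, a point the paper passes over in silence. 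Your observation that this last step requires $d\geq 2$ is also correct and is not an artifact of your method: for $d=1$ the nonzero constants belong to $L^2_{\text{sol}}(\Om)$ (they are orthogonal to $L^2_{\text{pot}}(\Om)$ by Lemma~\ref{lemipp}), while $L^2_{\text{sol,loc}}(\R)=\{0\}$, so Theorem~\ref{l2sol} as literally stated fails in dimension one; this is harmless for the paper, whose applications take $d=3$, and for the way the theorem is used (only distributional divergence-freeness of the realizations is ever needed).
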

  
  \begin{proof}
 Using a mollification as in the proof of Lemma~\ref{lemC1denseL2}, we see that $C^1(\Om)$ is dense in $H_{\textrm{div}}(\Om)$ for the natural norm $\|\sigma\|_{H_{\text{div}}}^2:=\|\sigma\|_{L^2(\Om)}^2+\| \operatorname{div}_\om \sigma\|_{L^2(\Om)}^2$. 
 The proof then proceeds as that of  Lemma~\ref{grad}. 
 \end{proof}

  \begin{prop}
  \label{propnondegen}
  The intersection between $L^2_{\text{pot}}(\Om)$ and the space of constant functions on $\Om$ is null.
  The measure $\mu$ is said to be \emph{non-degenerate}.
  \end{prop}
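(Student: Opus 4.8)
The plan is to prove the contrapositive flavoured statement by showing that every element of $L^2_{\text{pot}}(\Om)$ has vanishing expectation, and then noting that under the probability measure $\mu$ the only constant field with zero mean is the zero field. The whole content is already packaged in Lemma~\ref{lemipp}, so the argument is short.

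First I would recall from Lemma~\ref{lemipp} that $\E(D_i w)=0$ for every $w\in C^1(\Om)$ and every $i\in\{1,\dots,d\}$. Consequently, for each $u\in C^1(\Om)$ the vector field $D_\om u=(D_1 u,\dots,D_d u)$ satisfies $\E(D_\om u)=0$ in $\R^d$. Thus the expectation vanishes on the generating set $\{D_\om u:u\in C^1(\Om)\}$ whose closure in $L^2(\Om,\R^d)$ defines $L^2_{\text{pot}}(\Om)$. Next I would observe that the map $\sigma\mapsto \E(\sigma)=\int_\Om \sigma\, d\mu$ is a continuous linear functional on $L^2(\Om,\R^d)$: since $\mu$ is a probability measure, the Cauchy--Schwarz inequality yields $|\E(\sigma)|\leq \|\sigma\|_{L^2(\Om,\R^d)}$. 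Being continuous and vanishing on a set whose closed linear span is $L^2_{\text{pot}}(\Om)$, the functional $\E$ vanishes on all of $L^2_{\text{pot}}(\Om)$; that is, $\E(\sigma)=0$ for every $\sigma\in L^2_{\text{pot}}(\Om)$.

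Finally, suppose $\sigma\in L^2_{\text{pot}}(\Om)$ is a constant field, say $\sigma\equiv\xi$ for some $\xi\in\R^d$. On one hand, since $\mu(\Om)=1$, we have $\E(\sigma)=\xi$; on the other hand, the previous step gives $\E(\sigma)=0$. Hence $\xi=0$, which proves that the intersection of $L^2_{\text{pot}}(\Om)$ with the constant fields is trivial. I do not expect any genuine obstacle: all the work is done by Lemma~\ref{lemipp}, and the remaining ingredients are merely the continuity of the expectation functional on the finite-measure space $\Om$ and the normalization $\mu(\Om)=1$.
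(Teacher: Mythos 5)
Your proof is correct and follows essentially the same route as the paper: both rest on Lemma~\ref{lemipp} (zero mean of derivatives of $C^1(\Om)$ functions) combined with the Cauchy--Schwarz bound $|\E(\sigma)|\leq\|\sigma\|_{L^2(\Om)}$ to pass the vanishing of the expectation to the $L^2$-closure defining $L^2_{\text{pot}}(\Om)$, and then conclude that a constant with zero mean is zero. The paper merely inlines your \enquote{continuity of the expectation functional} step by estimating $|\xi|=|\E(D_\om u_k-\xi)|\leq\|D_\om u_k-\xi\|_{L^2(\Om)}$ along an approximating sequence, which is the same argument in concrete form.
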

  
  \begin{proof}
 Let  $\xi\in L^2_{\text{pot}}(\Om)$ be a constant function, and $(u_k)_k\in(C^1(\Om))^{\N}$ be a sequence such that $D_{\om}u_k$ converges to $\xi$ in $L^2(\Om)$. 
 Lemma~\ref{lemipp} implies $\E({D_\om u_k})=0$ and therefore,
  \begin{equation*}
  |\xi| = \lt|\E({\xi})\rt| = \lt|\E({D_{\om}u_k - \xi})\rt| \leq\E({ \lt|D_{\om}u_k - \xi\rt| })\leq \lt\|D_{\om}u_k-\xi\rt\|_{L^2(\Om)} \, ,
  \end{equation*}
  according to Cauchy-Schwarz inequality. Sending $k$ to infinity, the right-hand side goes to $0$ which leads to $\xi\equiv 0$.
  \end{proof}

\subsection{Two-scale convergence compactness theorem in $H^1(\mathcal{D})$}
 Having defined $H^1(\Om)$, the purpose of this subsection is to provide a two-scale compactness theorem in $H^1(\mathcal{D})$ in a manner similar to the periodic case (see~\cite{allaire1992homogenization}).

  \begin{lem}
  Let $\{v^{\eps}\}_{\eps\in I}$ be a family of elements of $H^1(\mathcal{D})$ (with $I$ as in Definition~\ref{def}) such that
  \begin{enumerate}[(i)]
  \item $\left\{\|v^{\eps}\|_{L^2(\mathcal{D})}\right\}$ is bounded
  \item $\|\eps\nabla v^{\eps}\|_{L^2(\mathcal{D})} \xrightarrow[\eps\downarrow 0]{} 0$.  
    \end{enumerate}
  Then, up to an extraction, there exists $v^0\in L^2(\mathcal{D})$ such that
  \begin{equation*}
  v^{\eps}(x) \overset{2}{\rightharpoonup} v^0(x) \text{ weakly in $L^2(\mathcal{D}\times \Omega)$}\,.
  \end{equation*}
  \label{notdepend}
  \end{lem}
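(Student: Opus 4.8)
The plan is to apply the $L^2$ two-scale compactness theorem to extract a limit, and then to use hypothesis (ii) to show that this limit cannot depend on the probabilistic variable $\om$. First, since $\{v^{\eps}\}$ is bounded in $L^2(\mathcal{D})$ by (i), Theorem~\ref{compacite} furnishes a sequence $(\eps_k)_k$ tending to $0$ and a function $w\in L^2(\mathcal{D}\times\Om)$ with $v^{\eps_k}\overset{2}{\rightharpoonup}w$. It then remains to prove that $w(x,\om)$ is, for a.e.\ $x$, constant in $\om$; setting $v^0(x):=\E(w(x,\cdot))$ will give the claim.

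To detect the $\om$-dependence of $w$, I would test against oscillating functions built from $b\in C^1(\Om)$. Fix $\varphi\in C_c^{\oo}(\mathcal{D})$, $b\in C^1(\Om)$ and $i\in\{1,\dots,d\}$. By Lemma~\ref{lien_derivees}, $\partial_{x_i}[b(T_{x/\eps}\widetilde{\om})]=\eps^{-1}(D_ib)(T_{x/\eps}\widetilde{\om})$, so integrating by parts (there is no boundary term since $\varphi$ has compact support in $\mathcal{D}$) gives
\begin{equation*}
\int_{\mathcal{D}} v^{\eps}\varphi\,(D_ib)(T_{x/\eps}\widetilde{\om})\,dx = -\eps\int_{\mathcal{D}}(\partial_{x_i}v^{\eps})\varphi\,b(T_{x/\eps}\widetilde{\om})\,dx -\eps\int_{\mathcal{D}}v^{\eps}(\partial_{x_i}\varphi)\,b(T_{x/\eps}\widetilde{\om})\,dx.
\end{equation*}
Both terms on the right tend to $0$ along $(\eps_k)$: the first is bounded by $\|\eps\nabla v^{\eps}\|_{L^2(\mathcal{D})}\,\|b\|_{\oo}\|\varphi\|_{L^2(\mathcal{D})}$, which vanishes by (ii), and the second is $O(\eps)$ thanks to (i) and the boundedness of $b$ on the compact space $\Om$. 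On the left, since $D_ib\in C(\Om)$, the very definition of weak two-scale convergence (Definition~\ref{def}) yields the limit $\int_{\mathcal{D}}\int_{\Om} w\,\varphi\,(D_ib)\,d\mu\,dx$. Hence
\begin{equation*}
\int_{\mathcal{D}}\int_{\Om} w(x,\om)\,\varphi(x)\,(D_ib)(\om)\,d\mu(\om)\,dx = 0
\end{equation*}
for all $\varphi\in C_c^{\oo}(\mathcal{D})$, all $b\in C^1(\Om)$ and all $i$.

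Since $\varphi$ is arbitrary and $x\mapsto\int_{\Om}w(x,\cdot)D_ib\,d\mu$ lies in $L^2(\mathcal{D})$ (Cauchy--Schwarz together with $w\in L^2(\mathcal{D}\times\Om)$), this says that for a.e.\ $x\in\mathcal{D}$ one has $\int_{\Om}w(x,\cdot)\,D_ib\,d\mu=0$; by the definition of the weak derivative on $\Om$ this means $w(x,\cdot)\in H^1(\Om)$ with $D_{\om}w(x,\cdot)\equiv0$. The only delicate point is to make the exceptional null set in $x$ independent of $b$ and $i$: I would fix a countable subset of $C^1(\Om)$ dense for the $H^1(\Om)$-norm (possible since $C^1(\Om)$ is dense in the separable Hilbert space $H^1(\Om)$), run the identity above for each of its elements, and extend to every $b\in C^1(\Om)$ by passing to the limit, using that $b\mapsto D_ib$ is continuous into $L^2(\Om)$. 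With a common null set secured, Proposition~\ref{propergodic} (ergodicity) shows that $w(x,\cdot)$ is $\mu$-a.s.\ constant for a.e.\ $x$; defining $v^0(x):=\E(w(x,\cdot))\in L^2(\mathcal{D})$ then gives $w(x,\om)=v^0(x)$ a.e., so that $v^{\eps_k}\overset{2}{\rightharpoonup}v^0$ with $v^0$ independent of $\om$. The one genuinely nontrivial step is this measurable selection of the null set; the rest reduces to the integration by parts and the two elementary estimates above.
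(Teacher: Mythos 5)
Your proposal is correct and follows essentially the same route as the paper's proof: extraction via Theorem~\ref{compacite}, then testing against rescaled functions whose spatial derivative is $\eps^{-1}$ times an $\om$-derivative (Lemma~\ref{lien_derivees}), integration by parts killed by hypothesis (ii), and finally Proposition~\ref{propergodic} to remove the $\om$-dependence. The only differences are cosmetic: you test componentwise with scalar $b\in C^1(\Om)$ and $D_i b$ where the paper uses vector fields $\sigma\in (C^1(\Om))^d$ and $\operatorname{div}_\om\sigma$ (equivalent, take $\sigma=b\,e_i$), and you are in fact more explicit than the paper about making the exceptional null set in $x$ uniform over the test functions via a countable dense family.
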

  
  \begin{proof}
  According to Theorem~\ref{compacite}, there exists $\hat v^0\in L^2(\mathcal{D}\times \Om)$ such that, up to the extraction of a subsequence, $(v^{\eps})_\eps$ two-scale converges weakly to $\hat v^0(x,\om)$. \\
Let $\sigma \in (C^1(\Om))^d$ and $\varphi\in C^1_c(\mathcal{D})$, we have for $0<\eps\leq 1$, 
\[
 \int_{\mathcal{D}} v^{\eps}(x)\varphi (x) \operatorname{div}_\om\sigma (T_{x/\eps}\widetilde{\om})\,dx =- \int_{\mathcal{D}} \eps \nabla_x(v^{\eps}\varphi)(x)\sigma(T_{x/\eps}\widetilde{\om})\,dx \, .
\]
  Passing to the  limit as $\eps$ goes to $0$, and using Cauchy-Schwarz inequality together with assumption (ii), and Proposition \ref{meanvalue}, we infer
  \begin{equation*}
  \int_{\mathcal{D}}\varphi (x) \lt[ \int_{\Om} \hat v^0(x,\om) \operatorname{div}_\om\sigma (\om) \,d\mu(\om)\rt]\,dx = 0 \, .
  \end{equation*}
As a consequence, for almost every $x\in \mathcal{D}$ and every $\sigma\in C^1(\Om)^d$,
\[
\int_{\Om} \hat v^0(x,\om) \operatorname{div}_\om\sigma (\om) \,d\mu(\om)=0.
\]
For such $x$, we have by definition that $\hat v^0(x,\cdot)$ is in $H^1(\Om)$ with $D_\om \hat v^0(x,\cdot)=0$ and by Proposition~\ref{propergodic}, this means that $\hat v^0(x,\om)$ does not depend on $\om$ up to a $\mu$-negligible set. Setting $v^0(x):=\E({\hat v^0(x,\cdot)})$, we have $v^0\in L^2(\mathcal{D})$ and $v^0=\hat v^0$ in $L^2(\mathcal{D}\times \Om)$ so that we still have  the weak two-scale convergence $v^{\eps}(x) \overset{2}{\rightharpoonup} v^0$.
%
  \end{proof}
  
The following compactness theorem is the most important of this section.

  \begin{thm}
  Let  $\{v^{\eps}\}_{\eps\in I}$ be a bounded sequence of $H^1(\mathcal{D})$.  Then, up to an  extraction, there exist $v^0$ in $H^1(\mathcal{D})$ and $\xi$ in $L^2(\mathcal{D},L^2_{\text{pot}}(\Om)) $ such that
  \begin{equation*}
  \left\{
  \begin{aligned}
   & v^{\eps} \overset{2}{\rightharpoonup} v^0(x)  \text{ strongly,} \\
     & \nabla v^{\eps} \overset{2}{\rightharpoonup} \nabla v^0(x) + \xi(x,\om) \text{ weakly, in $L^2(\mathcal{D}\times \Omega)$}\, .
  \end{aligned}\right.
  \end{equation*}
    Moreover, if for every $\eps\in I$, $v^{\eps}$ is in $H_0^1(\mathcal{D})$, then $v^0$ is in $H_0^1(\mathcal{D})$.

\label{coro}
  
  \end{thm}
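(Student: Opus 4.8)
The plan is to treat the two conclusions separately: the existence of $v^0$ and the strong two-scale convergence of $v^\eps$ follow from classical Sobolev compactness, while the weak two-scale limit of the gradients is identified by an integration-by-parts argument tested against solenoidal fields. First I would extract from the bounded family $\{v^\eps\}$ a subsequence converging weakly in $H^1(\mathcal D)$ to some $v^0\in H^1(\mathcal D)$; by the Rellich--Kondrachov theorem this subsequence converges strongly in $L^2(\mathcal D)$ to $v^0$, and Remark~\ref{remstrongconv} then gives at once that $v^\eps$ strongly two-scale converges to $v^0(x)$, which is the first line. Since $H^1_0(\mathcal D)$ is a closed subspace of $H^1(\mathcal D)$, hence weakly closed, the last assertion follows: if every $v^\eps$ lies in $H^1_0(\mathcal D)$, so does the weak limit $v^0$. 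For the gradients, $\{\nabla v^\eps\}$ is bounded in $L^2(\mathcal D,\R^d)$, so Theorem~\ref{compacite}, applied componentwise, yields after a further extraction a limit $\chi\in L^2(\mathcal D\times\Om,\R^d)$ with $\nabla v^\eps\overset{2}{\rightharpoonup}\chi$ weakly.

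The heart of the proof is to write $\chi(x,\om)=\nabla v^0(x)+\xi(x,\om)$ with $\xi(x,\cdot)\in L^2_{\text{pot}}(\Om)$ for a.e.\ $x$. I would test against fields $\sigma(T_{x/\eps}\widetilde\om)\varphi(x)$ with $\varphi\in C_c^\oo(\mathcal D)$ and $\sigma\in (C^1(\Om))^d$ solenoidal, that is $\operatorname{div}_\om\sigma=\sum_i D_i\sigma_i\equiv 0$ (cf.\ Remark~\ref{propdiv}). Integrating by parts in $x$ and using Lemma~\ref{lien_derivees}, the ``fast'' term $\tfrac1\eps(\operatorname{div}_\om\sigma)(T_{x/\eps}\widetilde\om)$ vanishes, leaving
\begin{equation*}
\int_{\mathcal D}\nabla v^\eps\cdot\sigma(T_{x/\eps}\widetilde\om)\,\varphi\,dx=-\int_{\mathcal D}v^\eps\,\sigma(T_{x/\eps}\widetilde\om)\cdot\nabla\varphi\,dx.
\end{equation*}
On the left I pass to the weak two-scale limit of $\nabla v^\eps$ against the admissible test function $\varphi\,\sigma_i\in C_c^\oo(\mathcal D)\otimes C(\Om)$; on the right I use that $v^\eps\to v^0$ strongly in $L^2(\mathcal D)$ together with the mean-value property (Proposition~\ref{meanvalue}), or equivalently the weak--strong principle of Proposition~\ref{cv2echforte}, to obtain the limit $-\int_{\mathcal D}v^0\,\E(\sigma)\cdot\nabla\varphi\,dx$. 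One more integration by parts in $\mathcal D$ (legitimate since $v^0\in H^1(\mathcal D)$ and $\E(\sigma)$ is constant), and the arbitrariness of $\varphi$, yield for a.e.\ $x$
\begin{equation*}
\int_\Om\chi(x,\om)\cdot\sigma(\om)\,d\mu(\om)=\nabla v^0(x)\cdot\E(\sigma).
\end{equation*}
Choosing $\sigma$ constant first identifies $\E_\om\chi(x,\cdot)=\nabla v^0(x)$, so that with $\xi:=\chi-\nabla v^0$ the relation reduces to $\int_\Om\xi(x,\cdot)\cdot\sigma\,d\mu=0$.

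It then remains to promote this orthogonality from smooth solenoidal $\sigma$ to all of $L^2_{\text{sol}}(\Om)$ and to control the exceptional $x$-sets uniformly. For the density I would mollify as in the proof of Theorem~\ref{l2sol}: if $\sigma\in L^2_{\text{sol}}(\Om)$, then $\sigma^\delta(\om)=\int_{\R^d}\rho^\delta(y)\sigma(T_{-y}\om)\,dy$ lies in $(C^1(\Om))^d$, satisfies $\operatorname{div}_\om\sigma^\delta=0$ by the translation invariance of $\mu$, and converges to $\sigma$ in $L^2(\Om,\R^d)$; hence smooth solenoidal fields are dense in $L^2_{\text{sol}}(\Om)$. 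As $L^2_{\text{sol}}(\Om)$ is separable, I pick a countable dense family of such fields, discard the corresponding countably many null sets, and conclude that for a.e.\ $x$ one has $\int_\Om\xi(x,\cdot)\cdot\sigma\,d\mu=0$ for every $\sigma\in L^2_{\text{sol}}(\Om)$, i.e.\ $\xi(x,\cdot)\in L^2_{\text{sol}}(\Om)^\perp=L^2_{\text{pot}}(\Om)$. This gives $\xi\in L^2(\mathcal D,L^2_{\text{pot}}(\Om))$ and $\nabla v^\eps\overset{2}{\rightharpoonup}\nabla v^0+\xi$, as claimed. I expect the main obstacle to be precisely this identification: producing admissible solenoidal correctors, ensuring the fast divergence term disappears upon integration by parts, and passing the mixed strong/weak product to the limit; the Sobolev-compactness part is routine.
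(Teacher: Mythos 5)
Your proof is correct and follows essentially the same route as the paper's: weak $H^1$ convergence plus Rellich and Remark~\ref{remstrongconv} give the strong two-scale limit, weak closedness of $H_0^1(\mathcal{D})$ gives the boundary-condition claim, and the corrector $\xi$ is identified by testing $\nabla v^\eps$ against $\varphi(x)\,\sigma(T_{x/\eps}\widetilde{\om})$ with $\sigma$ smooth solenoidal, killing the fast divergence term, passing to the limit, and using density of smooth solenoidal fields in $L^2_{\text{sol}}(\Om)$ together with $\bigl(L^2_{\text{sol}}(\Om)\bigr)^{\perp}=L^2_{\text{pot}}(\Om)$. The only differences are presentational: the paper disposes of the oscillating term by invoking Theorem~\ref{l2sol} (realizations of solenoidal fields are in $L^2_{\text{sol,loc}}(\R^d)$) instead of your pointwise computation with $\operatorname{div}_\om\sigma\equiv 0$, it identifies $\E_\om$ of the two-scale limit with $\nabla v^0$ by testing against $\varphi$ alone rather than constant $\sigma$, and your explicit mollification-plus-countable-family bookkeeping is, if anything, more careful than the paper's terse density argument.
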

  \begin{proof}
We first apply Theorem~\ref{compacite} to the family $\{v^\eps\}$ so that for some subsequence $(\eps_k)$,  $(v^{\eps_k})$ two-scale converges towards some $v_0\in L^2(\Omega\times\mathcal{D})$. According to Lemma~\ref{notdepend}, $v^0$ does not depend on $\om$. On the other hand, since $\{v^\eps\}$ is bounded in $H^1(\mathcal{D})$, up to a further extraction, we may assume that $v^\eps$ converges weakly in $H^1(\mathcal{D})$ and strongly in $L^2(\mathcal{D})$  to a limit $\bar{v}\in H^1(\mathcal{D})$. By  Remark~\ref{remstrongconv}, we have actually, $\bar{v}=v^0$ and the sequence strongly two-scale converges to $v^0$.
 
Next, we apply Theorem~\ref{compacite} to the family $\{\nabla v^\eps\}$. Extracting again, the sequence  $(v^{\eps_k})$ two-scale converges towards some element $w$ of $L^2(\Omega\times\mathcal{D},\R^d)$.  Let $\varphi$ in $C_c^{\oo}(\mathcal{D})$. Integrating by parts, we compute,
\[
      \int_{\mathcal{D}} \nabla v^{\eps}(x)\varphi (x) \, dx =  - \int_{\mathcal{D}} v^{\eps}(x)\nabla \varphi(x)\, dx\, ,
\]
  Passing to the two-scale limit, we obtain
  \begin{equation*}
\int_{\mathcal{D}}  \varphi(x)\lt[ \int_{\Om}w(x,\om) \,d\mu(\om) \rt]\,dx\,= - \int_{\mathcal{D}} v^0 \nabla \varphi\, dx\, .
  \end{equation*}
  Therefore, the distribution $\nabla v^0$ is given by the function 
  \[
  \nabla v^0= \int_{\Om}w(\cdot,\om)d\mu(\om)\,,
  \]
   which belongs to $L^2(\mathcal{D})$.

  Now, for $\sigma$ in $L^2_{\text{sol}}(\Om)\cap C^1(\Om,\R^d)$ and  $\varphi$ in $C_c^{\oo}(\mathcal{D})$, we compute
\begin{align*}
      \int_{\mathcal{D}} \nabla v^{\eps}(x)\cdot \sigma(T_{x/\eps}\widetilde\om )\varphi (x) \, dx =&   \int_{\mathcal{D}} \nabla [v^{\eps}(x)\varphi(x)]\cdot \sigma(T_{x/\eps}\widetilde\om )\, dx\ \\
      &\qquad - \int_{\mathcal{D}} v^{\eps}(x)\nabla \varphi(x)\cdot \sigma(T_{x/\eps}\widetilde\om )\, dx\\
      =&  - \int_{\mathcal{D}} v^{\eps}(x)\nabla \varphi(x)\cdot \sigma(T_{x/\eps}\widetilde\om )\, dx\, .
\end{align*}
We have used the fact that by Theorem~\ref{l2sol}, the mapping $x\mapsto \sigma(T_{x/\eps}\widetilde\om)$ is in $L^2_{\text{sol,loc}}(\R^d)$. 
Sending $\eps$ to $0$ and integrating by parts, we get
  \begin{align*}
\int_{\mathcal{D}}      \int_{\Om} w(x,\om)\cdot \sigma(\om)\varphi(x) d\mu \, dx &=- \int_{\mathcal{D}}  \int_{\Om} v^0(x) \nabla  \varphi (x)\cdot \sigma(\om)d\mu(\om) \, dx\\ &=\int_{\mathcal{D}} \int_{\Om}  \nabla  v^0(x)  \cdot  \sigma(\om)\varphi (x)d\mu(\om) \, dx\,.
  \end{align*}
Let us note $\xi:=w-\nabla v^0$.  The last identity shows that for almost every $x$ in $\mathcal{D}$, $\xi(x)$ lies in $((C(\Om))^d\cap L^2_{\text{sol}}(\Om))^{\perp}$. By density of $C(\Om)$ in $L^2(\Om)$ we conclude that 
  \[
 \xi \text{ is in }L^2(\mathcal{D},(L^2_{\text{sol}}(\Om))^{\perp})=L^2(\mathcal{D},L^2_{\text{pot}}(\Om)).
  \]
  This proves the main part of the theorem.

Finally, let us assume moreover, that for every $\eps$, $v^{\eps}$ is in $H_0^1(\mathcal{D})$. Since the subspace $H^1_0(\mathcal{D})$ is closed in $H^1(\mathcal{D})$, we see that $\bar{v}=v^0$ is also in $H_0^1(\mathcal{D})$.
\end{proof}

\section{Applications of stochastic two-scale convergence}
    
     With the tools that we just defined, we are now able to provide homogenization theorems for some elliptic and parabolic problems with random coefficients. To introduce the method and set the notations, we start by considering the well-known problem of diffusion equation. We then turn to non-linear problems, namely harmonic maps and the  Landau-Lifschitz-Gilbert equation which do not have in general a unique solution. From now on we assume $\mathcal{D}$ to be a bounded domain of $\R^3$. 
    
   \subsection{Elliptic equations}
 We consider the problem
\begin{equation}
\left\{
\begin{aligned}
&- \operatorname{div}\left( a(T_{x/\eps}\widetilde{\om})\nabla u^{\eps}(x) \right) = f(x) \, \quad\text{ in }\mathcal{D}\\
&~u^{\eps} \in H_0^1(\mathcal{D}) \, ,
\end{aligned}\right.
\label{pb}
\end{equation}
with  $f$ in $L^2(\mathcal{D})$ and a given $\widetilde{\om}\in \tilde{\Om}$ (see Definition~\ref{def}). The weak formulation of~\eqref{pb} reads 
\begin{equation}
\int_{\mathcal{D}} a(T_{x/\eps}\widetilde{\om})\nabla u^{\eps}(x)\cdot \nabla \varphi(x)dx = \int_{\mathcal{D}} f (x)\varphi(x)dx , \quad \forall\varphi\in C_c^{\oo}(\mathcal{D}) \, ,
\label{eqint}
\end{equation}
which, thanks to Lax-Milgram theorem, is well posed and admits a unique solution.
\begin{definition}\label{defhomogen}
Let us define the homogenized problem as 
\begin{equation}
\left\{
\begin{aligned}
&- \operatorname{div} a^{\text{eff}}\nabla u^0(x)  = f(x) \,, \\
&u^0 \in H_0^1(\mathcal{D}) \, ,
\end{aligned}\right.
\label{pbhomog}
\end{equation}
where $a^{\text{eff}}$ is the $3\times 3$ matrix defined  by
\begin{equation}
a^{\text{eff}}\nu = \int_{\Om} a(\om)(\nu+\eta_\nu(\om))d\mu(\om) \quad \text{for all }\nu\text{ in }\R^3\, ,
\label{aeff}
\end{equation}
with $\eta_\nu$ defined as the unique solution of the problem
\begin{equation}
\left\{
\begin{aligned}
&\eta_\nu\in L^2_{\text{pot}}(\Om)\, , \\
&a(\cdot)(\nu+\eta_\nu(\cdot))  \in L^2_{\text{sol}}(\Om) \, .
\end{aligned}\right. \label{xideff} 
\end{equation}
\end{definition}

\begin{prop}
\label{caraeff}
The matrix $a^{\text{eff}}$ is well defined, symmetric and positive-definite, with the same bounds as $a(x,\om)$, that is $c_1 \Id \leq a^{\text{eff}}\leq c_2 \Id$ (see assumption (H4)). Moreover, it verifies, for every $\nu$ in $\R^3$,
\begin{equation*}
a^{\text{eff}}\nu\cdot\nu = \inf_{v\in L^2_{\text{pot}}(\Omega)} \int_{\Omega} a(\omega)(\nu+v(\omega))\cdot(\nu+v(\omega))d\mu(\omega) \, .
\end{equation*}
\end{prop}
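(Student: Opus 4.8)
The plan is to recognize $\eta_\nu$ as the solution of a single coercive variational problem on the Hilbert space $L^2_{\text{pot}}(\Om)$ and to extract every assertion from that. First I would rewrite~\eqref{xideff}: since $L^2_{\text{sol}}(\Om)=L^2_{\text{pot}}(\Om)^\perp$, the requirement that $a(\cdot)(\nu+\eta_\nu)\in L^2_{\text{sol}}(\Om)$ with $\eta_\nu\in L^2_{\text{pot}}(\Om)$ is equivalent to the weak formulation
\[
\eta_\nu\in L^2_{\text{pot}}(\Om),\qquad \int_\Om a(\om)(\nu+\eta_\nu(\om))\cdot w(\om)\,d\mu(\om)=0\quad\text{for all }w\in L^2_{\text{pot}}(\Om).
\]
The bilinear form $B(v,w):=\int_\Om a\,v\cdot w\,d\mu$ is bounded and, by the ellipticity bound (H4), coercive on $L^2_{\text{pot}}(\Om)$, while $w\mapsto-\int_\Om a\nu\cdot w\,d\mu$ is a bounded linear functional; Lax--Milgram then delivers a unique $\eta_\nu$, so that $a^{\text{eff}}$ is well defined.

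A fact I would isolate at the outset is that every element of $L^2_{\text{pot}}(\Om)$ has vanishing expectation: this holds for $D_\om u$ with $u\in C^1(\Om)$ by Lemma~\ref{lemipp}, and passes to the $L^2$-closure, so in particular $\E(\eta_\nu)=0$. For symmetry, take $\nu_1,\nu_2\in\R^3$; since $a(\nu_1+\eta_{\nu_1})\in L^2_{\text{sol}}(\Om)$ is orthogonal to $\eta_{\nu_2}\in L^2_{\text{pot}}(\Om)$, I may add the vanishing term to obtain
\[
a^{\text{eff}}\nu_1\cdot\nu_2=\int_\Om a(\nu_1+\eta_{\nu_1})\cdot\nu_2\,d\mu=\int_\Om a(\nu_1+\eta_{\nu_1})\cdot(\nu_2+\eta_{\nu_2})\,d\mu,
\]
and the right-hand side is manifestly symmetric in $(\nu_1,\nu_2)$ because $a$ is symmetric. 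Evaluating this identity at $\nu_1=\nu_2=\nu$ gives the clean formula $a^{\text{eff}}\nu\cdot\nu=\int_\Om a(\nu+\eta_\nu)\cdot(\nu+\eta_\nu)\,d\mu$.

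For the variational characterization I would consider the functional $J(v):=\int_\Om a(\nu+v)\cdot(\nu+v)\,d\mu$ on $L^2_{\text{pot}}(\Om)$. By (H4) together with the mean-zero property one has $J(v)\geq c_1\bigl(|\nu|^2+\|v\|_{L^2}^2\bigr)$, so $J$ is strictly convex and coercive and admits a unique minimizer, characterized by its Euler--Lagrange condition $\int_\Om a(\nu+v)\cdot w\,d\mu=0$ for all $w\in L^2_{\text{pot}}(\Om)$, which is exactly the weak equation defining $\eta_\nu$. Hence $\eta_\nu$ is the minimizer and $a^{\text{eff}}\nu\cdot\nu=J(\eta_\nu)=\inf_v J(v)$, the claimed identity. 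The bounds now fall out of this formula: testing with $v=0$ yields $a^{\text{eff}}\nu\cdot\nu\leq J(0)=\int_\Om a\nu\cdot\nu\,d\mu\leq c_2|\nu|^2$, while the lower estimate $J(v)\geq c_1|\nu|^2$ above gives $a^{\text{eff}}\nu\cdot\nu\geq c_1|\nu|^2$, hence $c_1\Id\leq a^{\text{eff}}\leq c_2\Id$ and in particular positive-definiteness since $c_1>0$.

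I expect no serious obstacle in this argument, which is the standard corrector-and-energy duality of homogenization. The one point deserving genuine care is the mean-zero property of $L^2_{\text{pot}}(\Om)$, on which both the coercivity of $J$ and the lower bound $c_1\Id\leq a^{\text{eff}}$ rest; it must be justified by extending the identity $\E(D_iw)=0$ from $C^1(\Om)$ to the closure, which is where the specific structure of the stochastic gradient enters.
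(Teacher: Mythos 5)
Your proposal is correct and follows essentially the same route as the paper's proof: both identify $\eta_\nu$ through the coercive quadratic functional $v\mapsto\int_\Om a(\nu+v)\cdot(\nu+v)\,d\mu$ on $L^2_{\text{pot}}(\Om)$ (your Lax--Milgram step is just the weak-formulation packaging of the paper's direct minimization, the two being equivalent for a symmetric coercive form), derive symmetry from the bilinear identity $a^{\text{eff}}\nu_1\cdot\nu_2=\int_\Om a(\nu_1+\eta_{\nu_1})\cdot(\nu_2+\eta_{\nu_2})\,d\mu$, and obtain $c_1\Id\leq a^{\text{eff}}\leq c_2\Id$ from the mean-zero property of $L^2_{\text{pot}}(\Om)$ (Lemma~\ref{lemipp} passed to the closure) together with testing the infimum at $v=0$. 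Your explicit isolation of the mean-zero property as the point where the stochastic structure enters matches the paper's use of it exactly.
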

As a consequence, the homogenized problem is well posed and admits a unique solution.
\begin{proof}
The proof is very classical. For every $\nu$ in $\R^3$, let us define
 \begin{eqnarray*}
  Q_{\nu}: L^2_{\text{pot}}(\Om)& \longrightarrow &\R \\
  v &\longmapsto &\int_{\Om}a(\om)(\nu+v(\om))\cdot (\nu+v(\om))d\mu(\om) \, .
    \end{eqnarray*}
  Due to the coercivity assumption on $a$, $Q_{\nu}$ is a strongly convex quadratic functional that possesses a unique minimizer $\xi_{\nu}$ in $L^2_{\text{pot}}(\Om)$. Writing the Euler-Lagrange equation for $\xi_\nu$ leads to~\eqref{xideff} which shows on the one  hand that $\xi_\nu = \eta_\nu$. Using~\eqref{aeff} and~\eqref{xideff} shows on the other hand that
  $$
  a^{\text{eff}}\nu_1\cdot \nu_2 = \int_{\Om}a(\om)(\nu_1+\eta_{\nu_1}(\om))\cdot (\nu_2+\eta_{\nu_2}(\om))d\mu(\om)\, ,
  $$
  which gives that $a^{\text{eff}}$ is symmetric. 
  For the coercivity of $a^{\text{eff}}$, we compute
  \begin{align*}
  a^{\text{eff}}\nu\cdot\nu&= \int_{\Om}a(\om)(\nu+\eta_{\nu}(\om))\cdot (\nu+\eta_{\nu}(\om)) \, d\mu(\om)\geq c_1  \E({ |\nu+\eta_{\nu}(\om)|^2 })\\
  &\geq c_1|\nu|^2 +2 c_1\nu\cdot\E({\eta_\nu})=c_1|\nu|^2\,,
  \end{align*}
 where the identity  $\E({\eta_\nu})=0$ comes from the fact that $\eta_{\nu}$ is in $L^2_{pot}(\Om)$, and Lemma \ref{lemipp}.
  Eventually, we bound $a^{\text{eff}}\nu\cdot\nu$ by using the optimality of $\eta_\nu$. We have
\[
 a^{\text{eff}}\nu\cdot\nu = Q_\nu(\eta_\nu) \, \leq\, Q_\nu(0) =\E({a}) |\nu|^2 \leq c_2 |\nu|^2\,.
\]  
This achieves the proof of the proposition.
\end{proof}

 \begin{rem}
      Let us note that the effective matrix $a^{\text{eff}}$ may not be scalar, even if $a$ is. The following proposition gives a sufficient condition on $a$ so that $a^{\text{eff}}$ is scalar (see~\cite{armstrong2017quantitative} exercise 1.1).
  \end{rem}
     
     \begin{prop}
     Assume that $a$ is scalar valued and \emph{isotropic in law} in the following sense:  for every  linear isometry $\tau$ on $\R^3$, which maps $U=\{\pm e_i: i\in \{1,2,3\}\}$ 
      onto itself, and for every $A$ in $\mathcal{F}$, there holds
     \begin{equation*}
         \mu\{\om\circ\tau: \om\in A\}=\mu(A) \, .
     \end{equation*}
     Then $a^{\text{eff}}$ is scalar.
     \end{prop}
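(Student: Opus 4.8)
The plan is to prove that the quadratic form $\nu\mapsto a^{\text{eff}}\nu\cdot\nu$ is invariant under the group $G$ of linear isometries $\tau$ of $\R^3$ that map $U=\{\pm e_i\}$ onto itself, and then to conclude by an elementary linear-algebra argument. Throughout I write $a(\om)=\alpha(\om)$ for the scalar field and I use the variational characterization of Proposition~\ref{caraeff},
\[
a^{\text{eff}}\nu\cdot\nu=\inf_{v\in L^2_{\text{pot}}(\Om)}\int_\Om \alpha(\om)\,|\nu+v(\om)|^2\,d\mu(\om).
\]
For $\tau\in G$ I introduce the map $R_\tau:\Om\to\Om$, $R_\tau\om=\om\circ\tau$, which satisfies $a(x,R_\tau\om)=a(\tau x,\om)$ and, by the isotropy-in-law hypothesis, is a measure-preserving bijection. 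A direct computation gives the intertwining relation $T_xR_\tau=R_\tau T_{\tau x}$, and since $\tau^{-1}e_i=\pm e_{j(i)}$ for the underlying signed permutation $\tau$, differentiating $u\circ R_\tau$ along $e_i$ yields the transformation rule $D_\om(u\circ R_\tau)=\tau\,[(D_\om u)\circ R_\tau]$ for $u\in C^1(\Om)$. I then define the operator $S_\tau$ on $L^2(\Om,\R^3)$ by $(S_\tau w)(\om)=\tau\,w(R_\tau\om)$; since $\tau$ is orthogonal and $R_\tau$ is measure preserving, $S_\tau$ is unitary.

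The key step, which I expect to be the main obstacle, is to show that $S_\tau$ maps $L^2_{\text{pot}}(\Om)$ onto itself. The transformation rule gives $S_\tau(D_\om u)=D_\om(u\circ R_\tau)$ for $u\in C^1(\Om)$, but the subtlety is that $u\circ R_\tau$ need not belong to $C^1(\Om)$ since $R_\tau$ may fail to be continuous. I would circumvent this through the characterization~\eqref{caractH1}: because $R_\tau$ is measure preserving, the difference quotients of $u\circ R_\tau$ are isometric images of those of $u$, so
\[
\|(u\circ R_\tau)(T_{\delta e_i}\cdot)-u\circ R_\tau\|_{L^2(\Om)}=\|u(T_{\delta e_{j(i)}}\cdot)-u\|_{L^2(\Om)}\le C\delta,
\]
whence $u\circ R_\tau\in H^1(\Om)$ with weak gradient $S_\tau(D_\om u)$. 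Since $C^1(\Om)$ is dense in $H^1(\Om)$ and $D_\om$ is continuous, every weak gradient lies in $L^2_{\text{pot}}(\Om)$; therefore $S_\tau(D_\om u)\in L^2_{\text{pot}}(\Om)$. Density of $\{D_\om u:u\in C^1(\Om)\}$ in $L^2_{\text{pot}}(\Om)$ together with continuity of $S_\tau$ then gives $S_\tau(L^2_{\text{pot}})\subseteq L^2_{\text{pot}}$, and applying this to $\tau^{-1}\in G$ yields equality.

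With this invariance in hand, I substitute $v=S_\tau w$ (a bijection of $L^2_{\text{pot}}$) in the variational formula for $a^{\text{eff}}(\tau\nu)\cdot(\tau\nu)$. Using $|\tau\nu+\tau w(R_\tau\om)|^2=|\nu+w(R_\tau\om)|^2$, then the identity $\alpha\circ R_\tau=\alpha$ (which holds because $\tau$ fixes the origin, so $a(0,R_\tau\om)=a(0,\om)$), and finally the measure-preservation of $R_\tau$, the integrand becomes $H\circ R_\tau$ with $H(\om)=\alpha(\om)|\nu+w(\om)|^2$, and the integral equals $\int_\Om\alpha(\om)|\nu+w(\om)|^2\,d\mu$. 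Taking the infimum over $w\in L^2_{\text{pot}}(\Om)$ gives
\[
a^{\text{eff}}(\tau\nu)\cdot(\tau\nu)=a^{\text{eff}}\nu\cdot\nu\qquad\text{for every }\nu\in\R^3\text{ and every }\tau\in G.
\]

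It remains to deduce that $a^{\text{eff}}$ is scalar. Polarising the identity above gives $\tau^T a^{\text{eff}}\tau=a^{\text{eff}}$, and since $\tau$ is orthogonal this means $a^{\text{eff}}\tau=\tau a^{\text{eff}}$, i.e.\ $a^{\text{eff}}$ commutes with every $\tau\in G$. The group $G$ contains all sign changes $\operatorname{diag}(\pm1,\pm1,\pm1)$ and all coordinate permutations: commuting with the sign changes forces the off-diagonal entries of $a^{\text{eff}}$ to vanish, so $a^{\text{eff}}$ is diagonal, and commuting with the permutations forces its diagonal entries to coincide. Hence $a^{\text{eff}}$ is a scalar multiple of the identity, which is the desired conclusion.
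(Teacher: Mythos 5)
The paper itself contains no proof of this proposition: it is stated without proof, and the preceding remark delegates it to exercise 1.1 of \cite{armstrong2017quantitative}. So there is nothing internal to compare against; judged on its own merits, your proof is correct and self-contained. Your route is the natural one in this framework: conjugation by the measure-preserving bijections $R_\tau\om=\om\circ\tau$, invariance of $L^2_{\text{pot}}(\Om)$ under the unitaries $S_\tau$, invariance of the variational formula of Proposition~\ref{caraeff}, and the elementary observation that a symmetric matrix commuting with all sign changes and coordinate permutations is a multiple of the identity. The key identities all check out: the intertwining $T_xR_\tau=R_\tau T_{\tau x}$, the differentiation rule $D_\om(u\circ R_\tau)=\tau\,[(D_\om u)\circ R_\tau]$ (using that $\tau$ permutes $\{\pm e_i\}$, so $\tau$ acts as a signed permutation on the components of $D_\om$), the identity $\alpha\circ R_\tau=\alpha$ (because $\tau 0=0$), and the surjectivity of $S_\tau$ on $L^2_{\text{pot}}(\Om)$ obtained by applying the inclusion to $\tau^{-1}$ as well.

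Two technical points would need to be spelled out in a full write-up, though neither threatens the argument. First, from the characterization~\eqref{caractH1} you only get $u\circ R_\tau\in H^1(\Om)$; the identification of its weak gradient with $S_\tau(D_\om u)$ requires the difference-quotient argument you gesture at: for $u\in C^1(\Om)$ the quotients $\delta^{-1}\bigl(u(T_{\delta e_j}\cdot)-u\bigr)$ converge to $D_j u$ in $L^2(\Om)$ (pointwise convergence plus domination by $\|D_j u\|_\oo$, writing the quotient as an average of $D_j u$ along the segment), their images under the isometries you describe are the quotients of $u\circ R_\tau$, and an $L^2$-limit of difference quotients is automatically the weak gradient (test against $v\in C^1(\Om)$ and use stationarity to transfer the quotient onto $v$). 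Second, on the canonical space $K=[c_1,c_2]^{\mathcal{N}}$ the map $\om\mapsto\om\circ\tau$ is only defined $\mu$-almost everywhere unless $\tau$ preserves $\mathcal{N}$; since the isometries preserving $U$ are exactly the signed permutation matrices, choosing $\mathcal{N}=\Q^3$ makes $R_\tau$ everywhere defined (and even continuous), and in any case the hypothesis $\mu\{\om\circ\tau:\om\in A\}=\mu(A)$ already presupposes a measurable version of $R_\tau$. With these two remarks incorporated, the proof is complete.
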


We are now ready to state the homogenization theorem for Problem \eqref{pb}.
    \begin{thm}
    The solution  $u^{\eps}$ of the problem~\eqref{pb}  converges in  $L^2(\mathcal{D})$, and weakly converges in $H^1(\mathcal{D})$ towards the solution  $u^0$ of the homogenized problem~\eqref{pbhomog}.
\end{thm}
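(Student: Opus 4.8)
The plan is to combine the two-scale compactness theorem (Theorem~\ref{coro}) with the standard energy estimates for the elliptic problem~\eqref{pb}, and to identify the two-scale limit as the solution of the homogenized problem~\eqref{pbhomog}. First I would establish a uniform bound on $\{u^\eps\}$ in $H^1(\mathcal{D})$: testing the weak formulation~\eqref{eqint} against $\varphi=u^\eps$, using the coercivity from (H4), namely $c_1\|\nabla u^\eps\|_{L^2}^2\leq \int_{\mathcal{D}}a(T_{x/\eps}\widetilde{\om})\nabla u^\eps\cdot\nabla u^\eps\,dx$, together with the Poincar\'e inequality on $H^1_0(\mathcal{D})$ and Cauchy-Schwarz on the right-hand side, yields $\|u^\eps\|_{H^1(\mathcal{D})}\leq C\|f\|_{L^2(\mathcal{D})}$ independently of $\eps$.

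Having this bound, I would apply Theorem~\ref{coro} to extract a subsequence such that $u^\eps\overset{2}{\rightharpoonup}u^0(x)$ strongly with $u^0\in H^1_0(\mathcal{D})$ (the zero boundary condition being preserved by the last assertion of that theorem), and $\nabla u^\eps\overset{2}{\rightharpoonup}\nabla u^0(x)+\xi(x,\om)$ weakly, with $\xi\in L^2(\mathcal{D},L^2_{\text{pot}}(\Om))$. The heart of the proof is then to pass to the two-scale limit in~\eqref{eqint} using two suitably chosen classes of test functions. Taking test functions of the form $\varphi(x)=\psi(x)+\eps\theta(x)b(T_{x/\eps}\widetilde{\om})$ with $\psi,\theta\in C_c^\oo(\mathcal{D})$ and $b\in C^1(\Om)$, so that $\nabla\varphi = \nabla\psi + \eps\nabla\theta\, b(T_{x/\eps}\widetilde{\om}) + \theta\, D_\om b(T_{x/\eps}\widetilde{\om})$, and passing to the limit (the $\eps\nabla\theta\,b$ term vanishes while $a(T_{x/\eps}\widetilde{\om})$ two-scale converges strongly since $a\in C(\Om)$, allowing the use of Proposition~\ref{cv2echforte}), I would obtain the two-scale variational identity
\begin{equation*}
\int_{\mathcal{D}}\int_{\Om}a(\om)\bigl(\nabla u^0(x)+\xi(x,\om)\bigr)\cdot\bigl(\nabla\psi(x)+\theta(x)D_\om b(\om)\bigr)\,d\mu(\om)\,dx = \int_{\mathcal{D}}f\,\psi\,dx.
\end{equation*}

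From this identity I would separate the two independent pieces of information. Choosing $\psi=0$ gives, for a.e.\ $x$, that $a(\cdot)(\nabla u^0(x)+\xi(x,\cdot))$ is orthogonal to $\{D_\om b: b\in C^1(\Om)\}$, hence lies in $L^2_{\text{sol}}(\Om)$; since also $\xi(x,\cdot)\in L^2_{\text{pot}}(\Om)$, the uniqueness in~\eqref{xideff} (Proposition~\ref{caraeff}) identifies $\xi(x,\cdot)=\eta_{\nabla u^0(x)}$, the corrector associated with the vector $\nabla u^0(x)$. Choosing $\theta=0$ and integrating in $\om$ then gives $\int_{\mathcal{D}}\bigl(\int_{\Om}a(\om)(\nabla u^0+\xi)\,d\mu\bigr)\cdot\nabla\psi\,dx=\int_{\mathcal{D}}f\psi\,dx$, and by definition~\eqref{aeff} the bracket equals $a^{\text{eff}}\nabla u^0$, so $u^0$ solves the weak form of~\eqref{pbhomog}. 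Since~\eqref{pbhomog} has a unique solution by Proposition~\ref{caraeff}, the whole family (not just the subsequence) converges. The main obstacle is the rigorous passage to the limit in the mixed term involving the oscillating test function: one must verify that $a(T_{x/\eps}\widetilde{\om})$ strongly two-scale converges and combine this with the weak two-scale convergence of $\nabla u^\eps$ via Proposition~\ref{cv2echforte}, while carefully controlling the $O(\eps)$ remainder terms; this is where the strong-weak two-scale machinery developed in the previous section is essential.
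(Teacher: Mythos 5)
Your proposal is correct and follows essentially the same route as the paper: uniform $H^1$ bound via coercivity and Poincar\'e, two-scale compactness (Theorem~\ref{coro}), identification of $\xi(x,\cdot)$ as the corrector through orthogonality to $\{D_\om b : b\in C^1(\Om)\}$ and uniqueness in~\eqref{xideff}, and then recovery of the homogenized equation via the definition~\eqref{aeff} of $a^{\text{eff}}$. The only differences are cosmetic: you merge the two limit passages into a single test function $\psi+\eps\theta\, b(T_{x/\eps}\widetilde\om)$ where the paper treats $\eps\psi\, v(T_{x/\eps}\widetilde\om)$ and plain $\varphi$ separately (equivalent by linearity), and you make explicit the uniqueness argument upgrading subsequential convergence to convergence of the whole family, which the paper leaves implicit.
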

    
    \begin{proof}
    Let us first prove that $(u_{\eps})_{\eps}$ is bounded in $H^1(\mathcal{D})$. For every $ \eps>0$, as $a$ is bounded on $\Om$, and $u^{\eps}$ verifies the variational formulation~\eqref{eqint}, it holds that
    \begin{eqnarray*}
         \|\nabla u^{\eps}\|_{L^2(\mathcal{D},\R^3)}^2 &\leq& \frac{1}{c_1} \int_{\mathcal{D}} a(T_{x/ \eps}\widetilde{\om})\nabla u^{\eps}(x)\cdot \nabla u^{\eps}(x)dx\\ &=& \frac{1}{c_1}\int_{\mathcal{D}}f(x) u^{\eps}(x)dx \\ &\leq& \frac{1}{c_1}\|f\|_{L^2(\mathcal{D})}\|u^{\eps}\|_{L^2(\mathcal{D})}\\ &\leq & \frac{C}{c_1}\|f\|_{L^2(\mathcal{D})}\|\nabla u^{\eps}\|_{L^2(\mathcal{D},\R^3)}\, ,
    \end{eqnarray*}
    according to Cauchy-Schwarz and Poincar\'e inequalities.
     Thus, the sequence $(\|\nabla u^{\eps}\|_{(L^2(\mathcal{D}))^3}^2 )_{\eps>0}$ is bounded, and the family of functions $(u^{\eps})_{\eps>0}$ is bounded in $H^1(\mathcal{D})$ (again by Poincar\'e inequality), independently of $\eps$. According to Theorem~\ref{coro} and Rellich Theorem, up to the extraction of a subsequence, we can assume that
    \begin{equation*}
    \left\{
    \begin{aligned}
    & u^{\eps}(x) \overset{2}{\rightharpoonup} u^0(x) \quad\textrm{strongly}\, \text{in } L^2(\mathcal{D})\, ,  \\
      & \nabla u^{\eps}(x) \overset{2}{\rightharpoonup} \nabla u^0(x) + \xi(x,\om)\,  \text{ in } L^2(\mathcal{D}\times \R^3),\\
      & u^{\eps} \to u^0 \text{ weakly in } H^1(\mathcal{D})\, ,
    \end{aligned}\right.
  \end{equation*}
  for some $u^0$ in $H_0^1(\mathcal{D})$ and $\xi$ in  $L^2(\mathcal{D},L^2_{\text{pot}}(\Om))$. 
 Rewriting equation~\eqref{eqint} with the test function $\varphi(x) = \eps \psi(x)v(T_{x/\eps}\widetilde{\om})$, with $\psi\in C_c^{\oo}(\mathcal{D})$ and $v\in C^1(\Om)$, it holds that
  \begin{eqnarray*}
 && \int_{\mathcal{D}} a(T_{x/\eps}\widetilde{\om})\nabla u^{\eps}(x)\cdot \left(\eps v(T_{x/\eps}\widetilde{\om})\nabla \psi (x) + \psi(x)D_{\om}v (T_{x/\eps}\widetilde{\om})\right)dx \\ 
 &&\hspace*{7cm}=  \eps\int_{\mathcal{D}} f(x) \psi(x)v(T_{x/\eps}\widetilde{\om})dx \, .
  \end{eqnarray*}
As $a$ is in $C(\Om,\R^{3\times 3})$, we are allowed to pass to the two-scale limit:
\begin{equation*}
  \int_{\mathcal{D}}\int_{\Om} a(\om)(\nabla  u^0(x)+ \xi(x,\om))\cdot \psi(x)D_{\om} v(\om)d\mu(\om)dx = 0 \, ;
\end{equation*}
hence, a.e. in $\R^3$,
\begin{equation*}
  \int_{\Om} a(\om)(\nabla  u^0(x)+ \xi(x,\om))\cdot D_{\om} v(\om)d\mu(\om) = 0 \, .
\end{equation*}
We thus deduce that $a(\cdot)( \nabla u^0(x)+ \xi(x,\cdot))$ is in $L^2_{\text{sol}}(\Om)$ for a.e. $x\in \mathcal{D}$. As $\xi(x,\cdot)$ is in $L^2_{\text{pot}}(\Om)$, it follows that $\xi(x,\cdot)$ is the unique solution to the problem~\eqref{xideff} with $\nu = \nabla u_0(x)$, thus,
\begin{equation*}
  \int_{\Om} a(\om)(\nabla u^0(x)+\xi(x,\om))\,d\mu(\om)=  a^{\text{eff}}\nabla u^0(x)   \, .
\end{equation*}
Moreover, 
passing to the limit in the two-scale sense in Equation \eqref{eqint}, it holds that for every $\varphi\in C_c^{\oo}(\mathcal{D})$,
\begin{equation*}
  \int_{\mathcal{D}} \int_{\Om}a(\om)(\nabla u^0(x)+\xi(x,\om)) \cdot \nabla\varphi(x)\,d\mu(\om)\,dx  = \int_{\mathcal{D}}f(x)\varphi(x)\,dx\, ,
\end{equation*}
or, in other terms,
\begin{equation*}
  \int_{\mathcal{D}} a^{\text{eff}}\nabla u^0(x)\cdot\nabla\varphi(x) dx = \int_{\mathcal{D}} f(x)\varphi(x)\,dx \, .
\end{equation*}
Therefore, $u^0$ is the unique solution of the homogenized problem~\eqref{pbhomog}. 
\end{proof}
  
\subsection{the harmonic maps equation}
The subject of harmonic maps into manifold is very rich. Giving a complete bibliography on the topic is out
of the scope of this paper and we refer the reader to the review book~\cite{helein2008harmonic} for a rather
complete overview of existing literature on the subject. In terms of physical applications, harmonic maps into the
unit sphere of $\R^3$ have many common features with models that appear when studying liquid crystals or 
ferromagnetic  materials which are the main incentives this study.

Harmonic maps into the unit sphere are critical points of the Dirichlet energy
\begin{equation}
    \mathcal{E}(u) = \frac12 \int_{\mathcal{D}} | \nabla u(x)|^2 \,dx\,,
\label{energy}
\end{equation}
under the constraint $u\in H^1(\mathcal{D},\S^2)$, $\S^2$ being the unit sphere of $\R^3$.
They thus satisfy the harmonic maps equation
\begin{equation*}
    -\Delta u =|\nabla u|^2\,u,
\end{equation*}
in the weak sense, that is to say
\begin{equation}
    \forall \varphi\in H^1_0(\mathcal{D},\R^3),\quad\int_\mathcal{D} \nabla u(x)\cdot \nabla \varphi(x)\,dx = \int_\mathcal{D} |\nabla u(x)|^2 u(x) \cdot  \varphi(x)\,dx \, .
\label{hmeq}
\end{equation}
Existence of non-trivial solutions to~\eqref{hmeq} is guaranteed by the direct method of the calculus of variations, seeking minimizers 
of the energy~\eqref{energy} under the constraint $u\in H^1(\mathcal{D},\S^2)$ and Dirichlet boundary conditions on $\partial \mathcal{D}$. An equivalent weak form was obtained in~\cite{chen1989weak}, namely 
\[
    \forall \varphi\in H^1_0(\mathcal{D},\R^3),\quad     \sum_{i=1}^3 \int_{\mathcal{D}} \left(\partial_i u^{\eps}(x)\times u^{\eps}(x)\right)\cdot \partial_i \varphi(x) \,dx = 0 \, .
 \]
    
In view of applying the methodology described before and in preparation to 
the more complete model of ferromagnetic materials (see section~\ref{secLLG} below), 
we now consider the problem of homogenizing the harmonic maps equation when one
changes the energy~\eqref{energy} to
\[
   \mathcal{E}_\eps(u) = \frac12 \int_{\mathcal{D}}  a(T_{x/\eps}\widetilde{\om})\nabla u^{\eps}(x) \cdot \nabla u^{\eps}(x) \,dx\,,
\]
where again $\widetilde{\om}\in \tilde{\Om}$ is fixed, $a$ is a matrix valued random field satisfying assumptions (H1)--(H5'), $u^\epsilon$ belongs to 
$H^1(\mathcal{D}, S^2)$, and the product $\cdot$ is the scalar product of matrices. Critical points of this energy satisfy the following Euler-Lagrange equation
\begin{equation}
    \left\{
    \begin{aligned}
      &-\operatorname{div}\left(a(T_{x/\eps}\widetilde{\om})\nabla u^{\eps}(x)\right) = \left(a(T_{x/\eps}\widetilde{\om})\nabla u^{\eps}(x) \cdot \nabla u^{\eps}(x)\right) u^{\eps}(x) \,, \\
      &|u^{\eps}(x)| = 1 \; \text{ a.e. on }\mathcal{D} \, .
    \end{aligned}
    \right.
\label{notrepb}
\end{equation}
The above equation must be understood componentwise on $u^\eps$.
Using the same method as in~\cite{chen1989weak}, we have the following proposition whose proof is left to the reader.
\begin{prop}
$u^{\eps} \in H^1(\mathcal{D},\S^2)$ is a weak solution to~\eqref{notrepb} if and only if $u^{\eps}$ satisfies
\begin{equation}
     \sum_{i,j=1}^3 \int_{\mathcal{D}} a_{i,j}(T_{x/\eps}\widetilde{\om})\left( \partial_j u^{\eps}(x)\times u^{\eps}(x)\right)\cdot \partial_i \varphi(x) dx = 0, \quad \forall\varphi \in H_0^1(\mathcal{D},\R^3) \, .
\label{wformeps}
\end{equation}
\end{prop}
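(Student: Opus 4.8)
The plan is to prove both implications from the single weak formulation of~\eqref{notrepb}, exploiting two structural facts: the symmetry $a_{ij}=a_{ji}$, and the pointwise constraint $|u^\eps|=1$, which gives $u^\eps\cdot\partial_i u^\eps=0$ a.e. Throughout I read~\eqref{notrepb} in the weak sense: for every $\psi\in H^1_0(\mathcal D,\R^3)\cap L^\infty$,
\begin{equation*}
\sum_{i,j=1}^3\int_{\mathcal D} a_{ij}(T_{x/\eps}\widetilde\om)\,\partial_j u^\eps\cdot\partial_i\psi\,dx=\int_{\mathcal D}\Big(\sum_{i,j=1}^3 a_{ij}(T_{x/\eps}\widetilde\om)\,\partial_j u^\eps\cdot\partial_i u^\eps\Big)\,u^\eps\cdot\psi\,dx .
\end{equation*}
Since $u^\eps\in L^\infty$, all the products appearing below remain in $L^1$; the boundedness of $u^\eps$ is what makes the cross-product test functions admissible.

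To obtain~\eqref{wformeps} from~\eqref{notrepb}, I would insert the test function $\psi=\varphi\times u^\eps$ for $\varphi\in C_c^\infty(\mathcal D,\R^3)$, which lies in $H^1_0\cap L^\infty$ because $u^\eps$ is bounded. Expanding $\partial_i(\varphi\times u^\eps)=\partial_i\varphi\times u^\eps+\varphi\times\partial_i u^\eps$ and using the scalar triple product identity gives $\partial_j u^\eps\cdot(\partial_i\varphi\times u^\eps)=-(\partial_j u^\eps\times u^\eps)\cdot\partial_i\varphi$, while the remaining term $\partial_j u^\eps\cdot(\varphi\times\partial_i u^\eps)$ is antisymmetric in $(i,j)$ and is therefore annihilated upon contraction with the symmetric coefficients $a_{ij}=a_{ji}$. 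Hence the left-hand side of the weak form equals $-\sum_{i,j}\int a_{ij}(\partial_j u^\eps\times u^\eps)\cdot\partial_i\varphi$, whereas its right-hand side vanishes because $u^\eps\cdot(\varphi\times u^\eps)=0$. This yields~\eqref{wformeps} for $\varphi\in C_c^\infty$; since $\partial_j u^\eps\times u^\eps\in L^2$, the left-hand side of~\eqref{wformeps} is continuous in $\varphi$ for the $H^1_0$ norm, and density of $C_c^\infty$ in $H^1_0$ extends the identity to all $\varphi\in H^1_0(\mathcal D,\R^3)$.

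For the converse, I would fix $\psi\in H^1_0\cap L^\infty$ and split it pointwise, using $|u^\eps|=1$, into normal and tangential parts $\psi_N=(\psi\cdot u^\eps)u^\eps$ and $\psi_T=(u^\eps\times\psi)\times u^\eps$, both of which still belong to $H^1_0\cap L^\infty$ (products of $H^1\cap L^\infty$ factors with zero trace). For $\psi_N=\beta u^\eps$ with $\beta=\psi\cdot u^\eps$, the relations $\partial_j u^\eps\cdot u^\eps=0$ and $|u^\eps|^2=1$ make both sides of the weak form reduce to $\int_{\mathcal D}\beta\,(a\,\partial_j u^\eps\cdot\partial_i u^\eps$-contraction$)\,dx$, so they coincide identically and impose no constraint. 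For $\psi_T=\varphi\times u^\eps$ with $\varphi=u^\eps\times\psi\in H^1_0\cap L^\infty$, the same triple-product and symmetry computation as above shows the left-hand side equals $-\sum_{i,j}\int a_{ij}(\partial_j u^\eps\times u^\eps)\cdot\partial_i\varphi$, which vanishes by~\eqref{wformeps}, while the right-hand side vanishes because $u^\eps\cdot\psi_T=0$. Adding the two contributions recovers the weak form of~\eqref{notrepb}.

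The part to get right is not any single computation but the interplay of the two structural ingredients: the symmetry of $a$ alone kills the $\varphi\times\partial_i u^\eps$ contribution, while $|u^\eps|=1$ simultaneously supplies the orthogonality relations and the tangential/normal decomposition $\psi=(\psi\cdot u^\eps)u^\eps+(u^\eps\times\psi)\times u^\eps$. The only genuine technical care concerns function spaces: the nonlinear right-hand side of~\eqref{notrepb} forces the test functions of its weak form to be bounded, so one keeps all test functions in $H^1_0\cap L^\infty$—legitimate precisely because $u^\eps\in L^\infty$—and recovers the full space $H^1_0$ in~\eqref{wformeps} only at the end by density. This is exactly the scheme of~\cite{chen1989weak}, adapted here to the anisotropic symmetric coefficient $a$.
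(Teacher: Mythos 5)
Your proposal is correct: the paper itself gives no written proof (it explicitly leaves the argument to the reader, citing the method of~\cite{chen1989weak}), and your cross-product test function $\varphi\times u^\eps$, the antisymmetry-versus-symmetry cancellation of the $\varphi\times\partial_i u^\eps$ term, and the tangential/normal splitting $\psi=(\psi\cdot u^\eps)u^\eps+(u^\eps\times\psi)\times u^\eps$ are precisely that method, adapted correctly to the symmetric coefficient $a$. The technical points you flag (testing with $H^1_0\cap L^\infty$ functions because the right-hand side of~\eqref{notrepb} is only $L^1$, and recovering all of $H^1_0$ in~\eqref{wformeps} by density) are handled appropriately.
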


Let $a^{\text{eff}}$ be the effective matrix introduced in Definition~\ref{defhomogen} and let us consider the following homogenized problem:
     
\begin{equation}
    \left\{
    \begin{aligned}
        &-\operatorname{div}\left(a^{\text{eff}}\nabla u^0\right)= \left(a^{\text{eff}}\nabla u^0\cdot \nabla u^0\right) u^0 \, ,\\
        &|u^0(x)| = 1 \text{ a.e. on }\mathcal{D} \, .
    \end{aligned}
    \right.
\label{notrepbhomog}
\end{equation}
     
Likewise, weak solutions $u^0 \in H^1(\mathcal{D},\S^2)$ to~\eqref{notrepbhomog} are characterized by
\begin{equation}
     \sum_{i,j=1}^3 a^{\text{eff}}_{i,j}\int_{\mathcal{D}} \left(\partial_j u^0(x)\times u^0(x)\right)\cdot \partial_i \varphi(x)\, dx = 0, \quad \forall\varphi \in H^1_0(\mathcal{D},\R^3) \, .
\label{wfu0}
\end{equation}
     
\begin{thm}
   Let $(u^{\eps})_{\eps>0}$ be a sequence in $H^1(\mathcal{D},\S^2)$ of weak solutions of~\eqref{notrepb}, bounded in $H^1(\mathcal{D},\R^3)$. Then, up to the extraction of a subsequence, $(u^{\eps})_{\eps>0}$ weakly converges in $H^1(\mathcal{D},\R^3)$ to $u^0\in H^1(\mathcal{D},\S^2)$ which is a solution of the homogenized problem~\eqref{notrepbhomog}.
 \label{thmnotrepb}
\end{thm}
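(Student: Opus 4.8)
The plan is to mirror the linear elliptic argument of the previous subsection, the one genuinely new difficulty being the identification of the two-scale corrector: crossing with $u^0$ degenerates the flux, and the sphere constraint must be exploited to compensate for this. First I would extract the limits. Since $(u^\eps)$ is bounded in $H^1(\mathcal{D},\R^3)$, applying Theorem~\ref{coro} componentwise together with Rellich's theorem gives, up to extraction, a limit $u^0\in H^1(\mathcal{D},\R^3)$ with $u^\eps\to u^0$ strongly in $L^2(\mathcal{D})$ and weakly in $H^1(\mathcal{D})$, and for each component $l$ a corrector $\xi^{(l)}\in L^2(\mathcal{D},L^2_{\text{pot}}(\Om))$ with $\nabla u^\eps_l\overset{2}{\rightharpoonup}\nabla u^0_l+\xi^{(l)}$ weakly, while $u^\eps_l\overset{2}{\rightharpoonup} u^0_l$ strongly (Remark~\ref{remstrongconv}). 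Taking the almost everywhere limit along the $L^2$-convergent subsequence shows $|u^0|=1$ a.e., so $u^0\in H^1(\mathcal{D},\S^2)$. Throughout I write $\xi_j(x,\om)=(\xi^{(1)}_j,\xi^{(2)}_j,\xi^{(3)}_j)(x,\om)\in\R^3$ for the vector gathering the $j$-th spatial correctors of the three components.

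Next I would pass to the two-scale limit in the weak formulation~\eqref{wformeps}. Expanding the cross product into components, each scalar product $\partial_j u^\eps_l\,u^\eps_m$ is a product of a weakly and a strongly two-scale convergent factor, so Proposition~\ref{cv2echforte} (with $a_{ij}\in C(\Om)$ as oscillating factor and the components of $\partial_i\varphi$ as smooth test functions) applies and yields, first for $\varphi\in C^\oo_c(\mathcal{D},\R^3)$ and then by density for $\varphi\in H^1_0(\mathcal{D},\R^3)$,
\begin{equation*}
 \sum_{i,j=1}^3\int_{\mathcal{D}}\int_{\Om} a_{ij}(\om)\,\lt[(\partial_j u^0+\xi_j)\times u^0\rt]\cdot\partial_i\varphi\,d\mu(\om)\,dx=0.
\end{equation*}

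The heart of the proof is the identification of $\xi$. On the one hand, testing~\eqref{wformeps} with the oscillating field $\varphi(x)=\eps\,\psi(x)\,v(T_{x/\eps}\widetilde{\om})\,b$, with $\psi\in C^\oo_c(\mathcal{D})$, $v\in C^1(\Om)$ and $b\in\R^3$, using Lemma~\ref{lien_derivees} (the term carrying an explicit factor $\eps$ drops since the cross product stays bounded in $L^2$) and Proposition~\ref{cv2echforte}, then letting $\psi$ and $b$ vary, gives for a.e.\ $x$ and every $v\in C^1(\Om)$ the cell relation $G(v)\times u^0=0$, where $G_l(v)=\int_\Om F^{(l)}\cdot D_\om v\,d\mu$ and $F^{(l)}=a(\cdot)(\nabla u^0_l+\xi^{(l)})$; in words, the vector $G(v)$ is parallel to $u^0$. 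On the other hand, the pointwise identity $\sum_l u^\eps_l\partial_j u^\eps_l=\tfrac12\partial_j|u^\eps|^2=0$ passes to the two-scale limit by Proposition~\ref{cv2echforte} and, since $\sum_l u^0_l\partial_j u^0_l=0$, forces $u^0\cdot\xi_j=0$: the corrector is tangent to the sphere at $u^0$. Combined with $u^0\cdot\partial_j u^0=0$, this makes the vector $(F^{(l)}_i)_{l}$ orthogonal to $u^0$ for each spatial index $i$, hence $G(v)\perp u^0$ for every $v$. The two conditions $G(v)\parallel u^0$ and $G(v)\perp u^0$ force $G(v)=0$, i.e.\ $F^{(l)}\in L^2_{\text{sol}}(\Om)$; together with $\xi^{(l)}\in L^2_{\text{pot}}(\Om)$ this is exactly the cell problem~\eqref{xideff} for $\nu=\nabla u^0_l$, so by uniqueness (Proposition~\ref{caraeff}) one gets $\int_\Om F^{(l)}\,d\mu=a^{\text{eff}}\nabla u^0_l$ through~\eqref{aeff}.

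Finally, carrying out the $\om$-integration in the two-scale limit equation and using, for every $l$, the relation $\sum_j\int_\Om a_{ij}(\partial_j u^0_l+\xi^{(l)}_j)\,d\mu=(a^{\text{eff}}\nabla u^0_l)_i$, the equation collapses to $\sum_{i,j}a^{\text{eff}}_{ij}\int_{\mathcal{D}}(\partial_j u^0\times u^0)\cdot\partial_i\varphi\,dx=0$, which is precisely the weak formulation~\eqref{wfu0} of the homogenized problem~\eqref{notrepbhomog}. I expect the main obstacle to be exactly the corrector identification: unlike in the linear elliptic case, crossing with $u^0$ annihilates the part of the flux parallel to $u^0$, so the cell relation alone yields only $G(v)\parallel u^0$, and it is the two-scale tangency inherited from the constraint $|u^\eps|=1$ that supplies the complementary orthogonality $G(v)\perp u^0$ and closes the argument.
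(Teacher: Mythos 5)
Your proposal is correct and follows essentially the same route as the paper's proof: the same extraction via Theorem~\ref{coro} and Rellich, the same oscillating test functions $\eps\,\psi(x)\,v(T_{x/\eps}\widetilde{\om})$, the same use of the constraint $|u^\eps|=1$ to obtain the tangency relation $u^0\cdot\xi_i=0$, and the same identification of the flux with the cell problem~\eqref{xideff} leading to~\eqref{wfu0}. The only difference is cosmetic: where the paper decomposes the test function $v=\lambda u^0+v^{\perp}\times u^0$ and kills each piece separately (using~\eqref{perp} and~\eqref{perp2}), you decompose the flux $G(v)$, showing it is both parallel to $u^0$ (from the cell relation) and orthogonal to $u^0$ (from tangency) --- a dual formulation of the identical algebra.
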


\begin{proof}
     By assumption, the sequence $(u^{\eps})_{\eps>0}$ is bounded in $H^1(\mathcal{D},\R^3)$. Therefore, according to Theorem~\ref{coro}, there exist 
     \[
     u^0\in H^1(\mathcal{D},\R^3),\quad \xi\in L^2(\mathcal{D},L^2_{\text{pot}}(\Om,\R^{3\times 3}))
     \]
such that, up to the extraction of subsequences,
\begin{equation*}
    \left\{
    \begin{aligned}
     & u^{\eps}  \overset{2}{\rightharpoonup} u^0 \text{ strongly in } L^2(\mathcal{D},\R^3)\, , \\
     & \forall i \in \{ 1,2,3\},\;\partial_i u^{\eps}  \overset{2}{\rightharpoonup} \partial_i u^0 +\xi_i \, \text{in } L^2(\mathcal{D}\times\Om,\R^3)\, , \\
     & u^{\eps} \rightarrow  u^0 \text{ weakly in } {H^1(\mathcal{D},\R^3)}\, .
    \end{aligned}
    \right.
\end{equation*}
By Rellich Theorem  $(u^{\eps})_{\eps}$ converges to $u^0$ in $L^2(\mathcal{D},\R^3)$ and up to the extraction of a subsequence, we may assume that  $(u^{\eps}(x))_{\eps}$ converges to $u^0(x)$ for a.e. $x\in \mathcal{D}$, and therefore $|u^{0}(x)|=1$ for a.e. $x\in \mathcal{D}$.

Moreover, since for every $\eps>0$ and a.e. on $x$,   $|u^{\eps}(x)|=1$, it holds that, for every $i$ in $\{1,2,3\}$, $\partial_i u^{\eps}(x)\cdot u^{\eps}(x) =\frac12\partial_i |u^\eps(x)|^2 = 0$ almost everywhere on $\mathcal{D}$. Thus, for every $\varphi$ in $C_c^{\oo}(\mathcal{D})$, and $ \psi$ in $C(\Om)$,
\begin{equation*}
   \int_{\mathcal{D}} u^{\eps}(x)\cdot \partial_i u^{\eps}(x) \varphi(x) \psi(T_{x/\eps} \widetilde{\om})dx = 0 \, .
\end{equation*}
According to Proposition~\ref{cv2echforte}, we are allowed to pass to the limit in the above equation to get
\begin{equation*}
    \int_{\mathcal{D}} \int_{\Om}u^0(x)\cdot (\partial_i u^0(x) +\xi_i(x,\om))\varphi(x) \psi(\om)\,d\mu(\om)\,dx= 0 \, .
\end{equation*}
Noticing that $u^0\cdot \partial_i u^0\equiv 0$ a.e. in $\mathcal{D}$, we deduce that, almost everywhere on $\mathcal{D}$ and $\mu$-almost surely,  
\begin{equation}
    u^0(x)\cdot \xi_i(x,\om)=0 \, .
\label{perp2}
\end{equation}

Rewriting the variational formulation \eqref{wformeps} verified by $u^{\eps}$ with the test function $\varphi(x) = \eps \psi(x)v(T_{x/\eps}\widetilde{\om})$, where $\psi\in C_c^{\oo}(\mathcal{D})$ and $v\in C^1(\Om,\R^3)$, it holds that
\begin{eqnarray*}
        &&\eps\sum_{i,j = 1}^3\int_{\mathcal{D}} a_{i,j}(T_{x/\eps}\widetilde{\om}) (\partial_j u^{\eps}(x) \times u^{\eps}(x))\cdot  v(T_{x/\eps}\widetilde{\om})\partial_i \psi(x)dx \\
        &&\hspace*{2cm} +\sum_{i,j=1}^3\int_{\mathcal{D}} a_{i,j}(T_{x/\eps}\widetilde{\om}) (\partial_j u^{\eps}(x) \times u^{\eps}(x))\cdot  D_i v(T_{x/\eps}\widetilde{\om})\psi(x)dx =0\, .
\end{eqnarray*}
The sequence $(u^{\eps})_{\eps}$ strongly two-scale converges to $u^0$, $(\nabla u^{\eps})_{\eps}$ weakly two-scale converges to $\nabla u^0$, and $a$ is in $C(\Om,\R^{3\times 3})$, therefore, applying again Proposition~\ref{cv2echforte}, we are  allowed to pass to the limit and obtain
\begin{equation*}
    \sum_{i,j=1}^3\int_{\mathcal{D}} \int_{\Om}a_{i,j}(\om) ((\partial_j u^0(x)+\xi_j(x,\om)) \times u^0(x))\cdot D_i v(\om) \psi(x)\,d\mu(\om)\,dx =0 \, .
\end{equation*}
As a consequence, almost everywhere in $\mathcal{D}$, and for all $v\in C^1(\Om,\R^3)$,
\begin{equation*}
     \sum_{i,j=1}^3\int_{\Om}a_{i,j}(\om) ((\partial_j u^0(x)+\xi_j(x,\om)) \times u^0(x)) \cdot D_i v(\om)\,d\mu(\om) = 0\, ,
\end{equation*}
or equivalently
\begin{equation}
     \sum_{i,j=1}^3\int_{\Om}a_{i,j}(\om)(\partial_j u^0(x)+\xi_j(x,\om) ) \cdot D_i (v(\om)\times u^0(x))\,d\mu(\om) = 0 \,.
\label{perp}
\end{equation}

For every $v$ in $C^1(\Om,\R^3)$ and $x$ in $\mathcal{D}$, let us write $v$ as  $v =\lambda u^0(x)  + v^{\perp}\times u^0(x) $, with $\lambda = v\cdot u^0(x)\in C^1(\Om)$ and $v^{\perp} = v\times u^0(x) \in C^1(\Om,\R^3)$. Then,
\begin{eqnarray*}
     &&\int_{\Om} a(\om)(\nabla u^0(x) + \xi(x,\om))\cdot D_{\om} v(\om) \,d\mu(\om) \\
     &&\hspace*{1.5cm}= \sum_{i,j=1}^3\int_{\Om}a_{i,j}(\om) (\partial_j u^0(x)+\xi_j(x,\om))  \cdot D_i v(\om) \,d\mu(\om) \\
     &&\hspace*{1.5cm}= \sum_{i,j=1}^3\int_{\Om}a_{i,j}(\om) (\partial_j u^0(x)+\xi_j(x,\om))  \cdot D_i (v^{\perp}(\om)\times u^0(x))\,d\mu(\om)\\ 
     &&\hspace*{2.5cm}+ \sum_{i,j=1}^3\int_{\Om}a_{i,j} (\om)(\partial_j u^0(x)+\xi_j(x,\om))  \cdot u^0(x) D_i \lambda(\om) \,d\mu(\om)\\ 
     &&\hspace*{1.5cm}= 0 \, ,
\end{eqnarray*}
according to equations~\eqref{perp2} and~\eqref{perp}.

It follows that $a(\cdot)(\nabla u^0(x) + \xi(x,\cdot))$ is in $L^2_{\text{sol}}(\Om,\R^{3\times 3})$. As $\xi(x,\cdot)$ is in $L^2_{\text{pot}}(\Om,\R^{3\times 3})$, it is the unique solution to the problem~\eqref{xideff}, with $\nu=\nabla u^0(x)$, and as a consequence,
\begin{equation*}
    \int_{\Om} a(\om)(\nabla u^0(x) + \xi(x,\om))\,d\mu(\om)=a^{\text{eff}}\nabla u^0(x)  \, .
\end{equation*}
Here, the equation must be understood as Equation \eqref{aeff} for each of the coordinate of the vector valued function $u^0$ (with the same matrix $a^{\text{eff}}$).
It is worth noticing that, as in~\cite{alouges2015homogenization}, in that case, the solution $\xi$ to the corrector equation~\eqref{xideff} automatically satisfies~\eqref{perp2}.

Let us now show that $u^0$ verifies the variational formulation of the homogenized problem, i.e. \eqref{wfu0}. For every $\eps>0$ and $\varphi$ in $C_c^{\oo}(\mathcal{D},\R^3)$, $u^{\eps}$ verifies 
\begin{equation*}
     \sum_{i,j=1}^3 \int_{\mathcal{D}} a_{i,j}(T_{x/\eps} \widetilde{\om})(\partial_j u^{\eps}(x)\times u^{\eps}(x))\cdot \partial_i \varphi (x)\,dx = 0 \, .
\end{equation*}
Applying Proposition~\ref{cv2echforte} again, we are allowed to pass to the two limit in the above equation,  and using the Fubini Theorem, we obtain:
\begin{equation*}
     \sum_{i,j=1}^3 \int_{\mathcal{D}} \left(\left(\int_{\Om}a_{i,j}(\om) (\partial_j u^0(x)+ \xi_j(x,\om))\,d\mu(\om))\right)\times u^0(x)\right)\cdot \partial_i \varphi(x) \,dx = 0\,.
\end{equation*}
Moreover,
\begin{align*}
         \MoveEqLeft[3]\sum_{i,j=1}^3 \int_{\mathcal{D}} \left(\left(\int_{\Om}a_{i,j}(\om) (\partial_j u^0(x)+ \xi_j(x,\om))\,d\mu(\om)\right)\times u^0(x)\right)\cdot \partial_i \varphi(x) \,dx \nonumber\\
         &=\sum_{i=1}^3 \int_{\mathcal{D}} \left(\left(\left(a^{\text{eff}}\nabla u^0(x)\right)\cdot e_i\right)\times u^0(x)\right)\cdot \partial_i \varphi(x) \,dx \nonumber \\
         &= \sum_{i,j=1}^3 a^{\text{eff}}_{i,j} \int_{\mathcal{D}}(\partial_j u^0(x)\times u^0(x)) \cdot \partial_i\varphi(x)\,dx \, ,
\end{align*}
and therefore
\begin{equation*}
     \sum_{i,j=1}^3 a^{\text{eff}}_{i,j}\int_{\mathcal{D}} \left(\partial_j u^0(x)\times u^0(x)\right)\cdot \partial_i \varphi(x) dx = 0 \, ,
\end{equation*}
for every $\varphi$ in $C_c^{\infty}(\mathcal{D},\R^d)$, and therefore, by density, for every $\varphi$ in $H_0^1(\mathcal{D},\R^d)$, which is nothing but~\eqref{wfu0}. Therefore, $u^0$ is a weak solution to~\eqref{notrepbhomog}.
\end{proof}
     
\subsection{the Landau-Lifshitz-Gilbert equation}
\label{secLLG}

We now turn to the homogenization of Landau-Lifshitz-Gilbert equation. Let us set $Q_T =(0,T)\times \mathcal{D}$, with $T>0$, and let us consider, for every $\eps>0$, the problem
\begin{equation}
     \left\{ 
     \begin{aligned}
     \frac{\partial u^{\eps}}{\partial t}& =  u^{\eps}\times \operatorname{div}(a(T_{x/\eps}\widetilde{\om})\nabla u^{\eps})  - \lambda u^{\eps} \times \left(u^{\eps}\times (\operatorname{div}(a(T_{x/\eps}\widetilde{\om})\nabla u^{\eps})\right)\, , \\
     & u^{\eps}(0,x) = u_0(x) \, ,
     \end{aligned}
     \right.
\label{landaulif}
\end{equation}
with $\tilde{\omega}\in \tilde{\Omega}$ fixed according to  Definition~\ref{def} and $u_0$ a function of $H^1(\mathcal{D},\S^2)$. We make use of the definition of weak solutions from~\cite{alouges1992global}.
     
\begin{definition}
Let $u^{\epsilon}$ be a function defined on $Q_T$. We say that $u^{\epsilon}$ is a weak solution of~\eqref{landaulif} if it verifies:
\begin{enumerate}[(D1)]
     \item the function $u^{\eps}$ is in $H^1(Q_T,\S^2)$;
     \item for every $\phi$ in $C^\oo_c(Q_T,\R^3)$,
          \begin{multline*}
        \int_{Q_T}\left( \frac{\partial u^{\eps}}{\partial t}(t,x)  + \lambda  u^{\eps}(t,x) \times \frac{\partial u^{\eps}}{\partial t}(t,x)\right)\cdot \phi(t,x)\,dx\,dt   \\= (1+\lambda^2) \sum_{i,j=1}^3 \int_{Q_T} a_{i,j}(T_{x/\eps}\widetilde{\om})\left( \partial_j u^{\eps}(t,x)\times u^{\eps}(t,x)\right)\cdot\partial_i \phi (t,x)\,dx\,dt  \, ;
    \end{multline*}
    \item $u^{\eps}(0,\cdot) = u_0$ in the sense of traces;
    \item the Dirichlet energy is uniformly bounded independently of $t$: for every $t$ in $(0,T)$,
             \begin{multline*}
     \mathcal{E}_{\eps}(u^{\eps})(t):=\\
     \frac{1}{2}\int_{\mathcal{D}} a(T_{x/\eps}\widetilde{\om})\nabla u^{\eps}(t,x)\cdot \nabla u^{\eps}(t,x)\,dx +\frac{\lambda}{1+\lambda^2} \,\int_0^t\int_{\mathcal{D}}\left|\frac{\partial u^{\eps}}{\partial t}\right|^2 \,dx\,ds\\ 
     \leq c_2 \int_{\mathcal{D}}  |\nabla u_0|^2(x) \,dx\, .
         \end{multline*}
         \end{enumerate}    
\label{defwsol}
\end{definition}
     
Existence of weak solutions of~\eqref{landaulif} has been established in~\cite{alouges1992global,visintin1985landau}, using a Galerkin approximation.
We also consider the homogenized problem
\begin{equation}
     \left\{ 
     \begin{aligned}
    & \frac{\partial \bar{u}}{\partial t}  =  \bar{u}\times \operatorname{div}(a^{\text{eff}}\nabla \bar{u}) - \lambda \bar{u} \times \left(\bar{u}\times (\operatorname{div}(a^{\text{eff}}\nabla \bar{u})\right) \, ,\\
     & \bar{u}(x,0) = u_0(x) \, ,
     \end{aligned}
     \right.
     \label{landaulifhomog}
\end{equation}
with $a^{\text{eff}}$ the effective matrix introduced in Definition~\ref{defhomogen}. We define a weak solution $\bar u$ of \eqref{landaulifhomog} as in Definition~\ref{defwsol}, 
replacing the diffusion matrix $a(T_{x/\eps}\widetilde{\om})$ by $a^{\text{eff}}$.

In order to apply the methodology of Section 2 and 3.1 in this more complicated case, we need a proper notion of two-scale convergence for time dependent problems that do not 
present fast oscillations in time. We thus introduce the following definition.

\begin{definition}
We say  that $v^{\eps}(t,x)$ weakly  two-scale converges to $v(t,x,\om)$ in $L^2(Q_T)$ if $(v^{\eps})_{\eps>0}$ is a bounded sequence of  $L^2(Q_T)$ and for every $ \phi$ in $C_c^{\oo}(Q_T), $ and $ b$ in $C(\Om)$,
\begin{equation*}
  \lim_{\eps\to 0}  \int_{Q_T} v^{\eps}(t,x)\phi (t,x) b (T_{x/\eps}\widetilde{\om})\,dx\,dt = \int_{Q_T} \int_{\Om} v(t,x,\om)\phi (t,x) b(\om) d \mu (\om) \,dx\,dt \, .
\end{equation*}
\end{definition}
     
\begin{rem}
The definition of two-scale convergence in $L^2(Q_T)$ corresponds to Definition~\ref{def}, with $T_{(t,x)}=T_{x}$. Defined as such, $T$ is  an ergodic action that does not depend on $t$.  In other words, for every $u$ in $L^2(\Om)$, $\om$ in $\Om$, and $t$ in $(0,T)$, $u(T_{(t,0)}\om)=u(\om)$, and therefore $D_t u=0$. It follows that every $z$ in $L^2_{\operatorname{pot}}(\Om)$ has a vanishing first coordinate, and we thus consider $L^2_{\operatorname{pot}}(\Om)$ as naturally embedded into  $L^2(\Om,\R^3)$. The results of Section 2 thus still hold in this case, among which Theorem~\ref{coro}, stated in the following proposition with the needed modifications.
\end{rem}     

\begin{prop}\label{propH1}
Let $(v^{\eps})_{\eps>0}$ be a bounded sequence of   $H^1(Q_T)$. Then, there exist $v^0$ in $H^1(Q_T)$ and $\xi$ in $L^2(Q_T,L^2_{\text{pot}}(\Om, \R^{3\times 3}))$ such that, up to the extraction of a subsequence,
\[       \left\{
       \begin{aligned}
       & v^{\eps} {\longrightarrow} v^0 \text{ weakly in }H^1(Q_T)\,,& \\
       & \forall i \in \{1,2,3\},\;  \partial_i v^{\eps}\overset{2}{\rightharpoonup} \partial_i v^0 + \xi_i\, \text{in } L^2(Q_T\times \Omega)\, , & \\
       & \frac{\partial v^{\eps}}{\partial t} \overset{2}{\rightharpoonup} \frac{\partial v^0}{\partial t} \, \text{in } L^2(Q_T\times \Om)\, .& 
       \end{aligned}\right.
\]
\end{prop}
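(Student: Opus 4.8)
The plan is to recognize that the entire two-scale machinery of Section~2 carries over once the spatial domain $\mathcal{D}\subset\R^3$ is replaced by the space-time cylinder $Q_T\subset\R^4$ and the group action is extended to $\R^4$ by $T_{(t,x)}=T_x$, as anticipated in the remark preceding the statement. First I would check that this extended action is still measure-preserving and ergodic: since $T_{(\delta,0)}=\Id$, a set is invariant under $\{T_{(t,x)}\}_{(t,x)\in\R^4}$ if and only if it is invariant under the spatial action $\{T_x\}_{x\in\R^3}$, so (H3) yields ergodicity of the extended action. The mean-value property on $Q_T$ (the analogue of Proposition~\ref{meanvalue}) follows from the spatial one by Fubini and dominated convergence: for $\phi\in C_c^{\oo}(Q_T)$ and $b\in C(\Om)$ one writes $\int_{Q_T}\phi(t,x)b(T_{x/\eps}\widetilde{\om})\,dx\,dt=\int_0^T\big(\int_{\mathcal D}\phi(t,x)b(T_{x/\eps}\widetilde{\om})\,dx\big)\,dt$, applies Proposition~\ref{meanvalue} to the inner spatial integral for each fixed $t$ (here $\widetilde{\om}\in\widetilde{\Om}$ is a typical trajectory for the spatial action), and passes to the limit under the time integral using the uniform bound $\|b\|_\oo\int_{\mathcal D}|\phi(t,x)|\,dx$. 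Consequently Theorem~\ref{compacite} and Theorem~\ref{coro} hold with $\mathcal D$ replaced by $Q_T$.

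With this in hand I would run the compactness argument directly. Since $(v^{\eps})$ is bounded in $H^1(Q_T)$, up to extraction it converges weakly in $H^1(Q_T)$ to some $v^0$, which gives the first line of the conclusion; by the Rellich theorem the convergence is strong in $L^2(Q_T)$, so by Remark~\ref{remstrongconv} the sequence strongly two-scale converges to $v^0(t,x)$ (independent of $\om$), and this limit coincides with the weak $H^1$ limit. Applying Theorem~\ref{coro} on $Q_T$ to the full space-time gradient $\nabla_{(t,x)}v^{\eps}=(\partial_t v^{\eps},\partial_1 v^{\eps},\partial_2 v^{\eps},\partial_3 v^{\eps})$ (componentwise if $v^{\eps}$ is vector-valued) yields, after a further extraction, a corrector $\Xi\in L^2(Q_T,L^2_{\text{pot}}(\Om))$ with $\nabla_{(t,x)}v^{\eps}\overset{2}{\rightharpoonup}\nabla_{(t,x)}v^0+\Xi$ weakly in $L^2(Q_T\times\Om)$.

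The decisive structural observation is that $L^2_{\text{pot}}(\Om)$, computed for the extended action, has a vanishing time-component. Indeed, for every $u\in C^1(\Om)$ the time-derivative $D_t u=\lim_{\delta\to0}\delta^{-1}\big(u(T_{(\delta,0)}\om)-u(\om)\big)$ equals $0$ because $T_{(\delta,0)}=\Id$, so $D_{(t,x)}u=(0,D_\om u)$ and the closure defining $L^2_{\text{pot}}(\Om)$ inherits a zero first coordinate. Writing $\Xi=(0,\xi_1,\xi_2,\xi_3)$ with $\xi_i(t,x,\cdot)\in L^2_{\text{pot}}(\Om)$ (in the spatial sense), and decomposing $\nabla_{(t,x)}v^0=(\partial_t v^0,\partial_1 v^0,\partial_2 v^0,\partial_3 v^0)$, the time component gives $\partial_t v^{\eps}\overset{2}{\rightharpoonup}\partial_t v^0$ with no corrector, while the spatial components give $\partial_i v^{\eps}\overset{2}{\rightharpoonup}\partial_i v^0+\xi_i$ for $i=1,2,3$. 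This is exactly the claimed conclusion.

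The only point genuinely requiring care — rather than a wave at ``the results of Section~2 still hold'' — is the justification that the degenerate-in-time action fits the framework and that the potential space decouples as above; granting this, all three conclusions reduce to Theorem~\ref{coro}. As an independent check of the absence of a corrector in time, I would note that it can be obtained directly: integrating by parts in $t$ against $\phi(t,x)b(T_{x/\eps}\widetilde{\om})$ with $\phi\in C_c^{\oo}(Q_T)$ kills the boundary terms, and passing to the two-scale limit on both sides via the strong two-scale convergence of $v^{\eps}$ identifies the two-scale limit of $\partial_t v^{\eps}$ with $\partial_t v^0$, confirming the third line without invoking the $\R^4$ potential space.
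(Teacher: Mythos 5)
Your proof is correct and takes essentially the same route as the paper, which justifies this proposition entirely by the remark preceding it: extend the action degenerately in time via $T_{(t,x)}=T_x$, observe that ergodicity is preserved and that every element of $L^2_{\text{pot}}(\Om)$ for this action has vanishing time component, and then invoke Theorem~\ref{coro} on $Q_T$. You simply supply the details the paper leaves implicit (the Fubini/dominated-convergence argument for the mean-value property on $Q_T$ and the closure argument showing the corrector has zero first coordinate), plus an optional direct integration-by-parts check of the time component.
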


The main result of the paper is the following.

\begin{thm}
Let $(u^{\eps})_{\eps>0}$ be a family of weak solutions of~\eqref{landaulif} such that the energy is uniformly bounded independently of $\eps$ (i.e. the constant $K$ in the Definition~\ref{defwsol} does not depend on $\eps$). Then, up to the extraction of a subsequence, $u^{\eps}$ weakly converges in $H^1(Q_T,\R^3)$ as $\eps$ goes to 0  to $\bar u$ which is a weak solution of the homogenized problem~\eqref{landaulifhomog}.
\end{thm}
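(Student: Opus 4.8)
The plan is to reproduce, step by step, the argument used for the harmonic maps equation in the proof of Theorem~\ref{thmnotrepb}, the genuinely new points being the treatment of the time-derivative terms and of the energy inequality~(D4). First I would collect the uniform bounds: since $|u^{\eps}|=1$ a.e., the family is bounded in $L^\oo(Q_T)$, hence in $L^2(Q_T)$; the ellipticity $c_1\Id\leq a$ together with (D4) bounds $\nabla u^{\eps}$ in $L^2(Q_T)$, and the dissipation term in (D4) bounds $\partial_t u^{\eps}$ in $L^2(Q_T)$. Thus $(u^{\eps})$ is bounded in $H^1(Q_T,\R^3)$ uniformly in $\eps$, and Proposition~\ref{propH1} provides, up to a subsequence, limits $\bar u\in H^1(Q_T,\R^3)$ and $\xi\in L^2(Q_T,L^2_{\text{pot}}(\Om,\R^{3\times3}))$ with $u^{\eps}\rightharpoonup\bar u$ weakly in $H^1(Q_T)$, $\partial_i u^{\eps}\overset{2}{\rightharpoonup}\partial_i\bar u+\xi_i$ and $\partial_t u^{\eps}\overset{2}{\rightharpoonup}\partial_t\bar u$ in $L^2(Q_T\times\Om)$. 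By Rellich's theorem $u^{\eps}\to\bar u$ strongly in $L^2(Q_T)$ and, after a further extraction, a.e., so that $|\bar u|=1$ a.e., giving~(D1). The trace at $t=0$ being a bounded, hence weakly continuous, linear operator on $H^1(Q_T)$, the identities $u^{\eps}(0,\cdot)=u_0$ pass to the limit and yield~(D3).

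Next I would recover the constraint relation and the corrector identity exactly as in Theorem~\ref{thmnotrepb}. From $u^{\eps}\cdot\partial_i u^{\eps}=0$ and Proposition~\ref{cv2echforte} (strong two-scale convergence of $u^{\eps}$, weak two-scale convergence of $\partial_i u^{\eps}$) one gets $\bar u\cdot(\partial_i\bar u+\xi_i)=0$, hence the analogue of~\eqref{perp2}, namely $\bar u\cdot\xi_i=0$. Then I would test~(D2) with the oscillating function $\phi(t,x)=\eps\,\psi(t,x)\,v(T_{x/\eps}\widetilde{\om})$, $\psi\in C_c^\oo(Q_T)$, $v\in C^1(\Om,\R^3)$. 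The whole left-hand side is $O(\eps)$ since $\partial_t u^{\eps}$ and $u^{\eps}\times\partial_t u^{\eps}$ are bounded in $L^2(Q_T)$ while $\|\phi\|_{L^2}=O(\eps)$; on the right-hand side, $\partial_i\phi=\eps\,\partial_i\psi\,v(T_{x/\eps}\widetilde{\om})+\psi\,D_iv(T_{x/\eps}\widetilde{\om})$ by Lemma~\ref{lien_derivees}, and only the second term survives. Passing to the limit with Proposition~\ref{cv2echforte} gives, for a.e.\ $(t,x)$ and every $v\in C^1(\Om,\R^3)$,
\begin{equation*}
\sum_{i,j=1}^3\int_\Om a_{i,j}(\om)\bigl((\partial_j\bar u+\xi_j)\times\bar u\bigr)\cdot D_iv(\om)\,d\mu(\om)=0.
\end{equation*}
Writing $v=\lambda\,\bar u+v^\perp\times\bar u$ with $\lambda=v\cdot\bar u$, $v^\perp=v\times\bar u$, and using $\bar u\cdot\xi_i=0$ as in Theorem~\ref{thmnotrepb}, this shows that $a(\cdot)(\nabla\bar u(x)+\xi(x,\cdot))\in L^2_{\text{sol}}(\Om)$; since $\xi(x,\cdot)\in L^2_{\text{pot}}(\Om)$, it is the unique solution of~\eqref{xideff} with $\nu=\nabla\bar u(x)$, whence $\int_\Om a(\om)(\nabla\bar u(x)+\xi(x,\om))\,d\mu(\om)=a^{\text{eff}}\nabla\bar u(x)$.

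I would then pass to the limit in~(D2) with a fixed $\phi\in C_c^\oo(Q_T,\R^3)$. On the left-hand side $\partial_t u^{\eps}\rightharpoonup\partial_t\bar u$ weakly while $u^{\eps}\to\bar u$ strongly in $L^2(Q_T)$, so the weak-strong principle gives $u^{\eps}\times\partial_t u^{\eps}\rightharpoonup\bar u\times\partial_t\bar u$; on the right-hand side Proposition~\ref{cv2echforte} together with the corrector identity above produces the effective term $(1+\lambda^2)\sum_{i,j}a^{\text{eff}}_{i,j}\int_{Q_T}(\partial_j\bar u\times\bar u)\cdot\partial_i\phi$. Hence $\bar u$ satisfies~(D2) with $a$ replaced by $a^{\text{eff}}$.

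Finally, for~(D4) I would test the energy inequality against an arbitrary nonnegative $\theta\in C_c^\oo(0,T)$. For the dissipation term, Fubini rewrites $\int_0^T\theta(t)\int_0^t\!\int_{\mathcal{D}}|\partial_t u^{\eps}|^2$ as $\int_{Q_T}\Theta(s)|\partial_t u^{\eps}|^2$ with $\Theta(s)=\int_s^T\theta\geq0$, so weak lower semicontinuity of the $L^2$-norm controls $\partial_t\bar u$. For the gradient term the key estimate is the two-scale lower semicontinuity
\begin{equation*}
\liminf_{\eps\to0}\int_{Q_T}\!\theta(t)\,a(T_{x/\eps}\widetilde{\om})\nabla u^{\eps}\cdot\nabla u^{\eps}\ \geq\ \int_{Q_T}\!\int_\Om\theta(t)\,a(\om)(\nabla\bar u+\xi)\cdot(\nabla\bar u+\xi)\,d\mu,
\end{equation*}
which, combined with the variational characterization of Proposition~\ref{caraeff} giving $\int_\Om a(\om)(\nabla\bar u+\xi)\cdot(\nabla\bar u+\xi)\,d\mu=a^{\text{eff}}\nabla\bar u\cdot\nabla\bar u$, reproduces the effective Dirichlet energy; since $\theta\geq0$ is arbitrary,~(D4) follows for a.e.\ $t$. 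The main obstacle I anticipate is exactly this last step: establishing the two-scale lower semicontinuity inequality (which I would prove by expanding $0\leq\int_{Q_T}\theta\,a(T_{x/\eps}\widetilde{\om})(\nabla u^{\eps}-\Phi(\cdot,T_{x/\eps}\widetilde{\om}))\cdot(\nabla u^{\eps}-\Phi(\cdot,T_{x/\eps}\widetilde{\om}))$ for $\Phi\in C_c^\oo(Q_T)\otimes C(\Om)$ approximating $\nabla\bar u+\xi$ in $L^2(Q_T\times\Om)$ and letting $\eps\to0$ then $\Phi\to\nabla\bar u+\xi$) and then upgrading the bound from a.e.\ $t$ to the pointwise statement required in~(D4).
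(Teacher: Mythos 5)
Your argument up to and including the passage to the limit in (D2) -- the uniform $H^1(Q_T,\R^3)$ bound, Proposition~\ref{propH1} plus Rellich for (D1), trace continuity for (D3), the oscillating test functions $\phi=\eps\,\psi\,v(T_{x/\eps}\widetilde{\om})$, the orthogonality relation $\bar u\cdot(\partial_i\bar u+\xi_i)=0$ (the paper's~\eqref{perp02}), the decomposition of $v$ along and orthogonal to $\bar u$ to identify $a(\cdot)(\nabla\bar u+\xi)\in L^2_{\text{sol}}(\Om)$, and the weak-strong passage to the limit in (D2) -- coincides with the paper's proof. The divergence is in (D4), and there your proposal has a genuine gap, which you yourself flag: averaging the energy inequality against $\theta\in C_c^{\oo}(0,T)$, $\theta\geq 0$, can only yield the homogenized inequality for \emph{almost every} $t$, whereas Definition~\ref{defwsol} requires it for \emph{every} $t\in(0,T)$, and nothing in your scheme closes this, since weak convergence in $H^1(Q_T)$ gives no control of the slice energy $\int_{\mathcal{D}}a^{\text{eff}}\nabla\bar u(t,x)\cdot\nabla\bar u(t,x)\,dx$ at a prescribed time.

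The paper avoids the problem by arguing at a fixed, arbitrary $t$: since (D4) holds for every $t$ for $u^{\eps}$, the traces $u^{\eps}(t,\cdot)$ are bounded in $H^1(\mathcal{D},\R^3)$ uniformly in $\eps$; Theorem~\ref{coro} applied to this family at that fixed $t$ gives $\nabla u^{\eps}(t)\overset{2}{\rightharpoonup}\nabla\bar u(t)+v_1$ for some $v_1\in L^2(\mathcal{D},L^2_{\text{pot}}(\Om,\R^{3\times 3}))$. Note that $v_1$ need not coincide with the trace $\xi(t,\cdot,\cdot)$ of your space-time corrector; this is why the paper then combines the quadratic expansion against test-function approximations of $v_1$ with the \emph{inequality} furnished by the infimum characterization of Proposition~\ref{caraeff}, namely $\int_{\Om}a(\om)(\nabla\bar u(t)+v_1)\cdot(\nabla\bar u(t)+v_1)\,d\mu(\om)\geq a^{\text{eff}}\nabla\bar u(t)\cdot\nabla\bar u(t)$, rather than the equality you invoke. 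The dissipation term is handled at the same fixed $t$ by weak lower semicontinuity in $L^2(Q_t)$, so (D4) is obtained directly for every $t$. Your route can be repaired, either by adopting this fixed-$t$ argument, or by upgrading your a.e.\ statement: along admissible times $t_n\to t$ one has $\bar u(t_n)\to\bar u(t)$ in $L^2(\mathcal{D})$ by trace continuity while $\nabla\bar u(t_n)$ stays bounded in $L^2(\mathcal{D})$, hence $\nabla\bar u(t_n)\rightharpoonup\nabla\bar u(t)$ weakly, and weak lower semicontinuity of $w\mapsto\int_{\mathcal{D}}a^{\text{eff}}w\cdot w\,dx$ together with continuity of $t\mapsto\int_0^t\int_{\mathcal{D}}|\partial_t\bar u|^2\,dx\,ds$ transfers the inequality to $t$. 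Either way the missing step must be supplied; aside from it, your space-time two-scale lower-semicontinuity lemma (using the exact corrector $\xi$ and the equality $\int_{\Om}a(\nabla\bar u+\xi)\cdot(\nabla\bar u+\xi)\,d\mu=a^{\text{eff}}\nabla\bar u\cdot\nabla\bar u$) is correct, and is a legitimate, slightly cleaner substitute for the paper's fixed-time expansion precisely because on $Q_T$ the two-scale limit of the gradients is already known to be the minimizer.
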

\begin{proof}
  Let us first demonstrate that the sequence $(u^{ \eps})_{\eps}$ is bounded in $H^1(Q_T,\R^3)$. 
       As $Q_T$ is a bounded domain and, for every $\eps>0$, $u^{\eps}$ is in $H^ 1(Q_T,\S^ 2)$, it follows that $(u^{\eps})_{\eps>0}$ is bounded in the norm of $L^2(Q_T,\R^3)$.
For every $\eps>0$, using the uniform ellipticity of $a$,
\begin{eqnarray*}
    \|\nabla u^{\eps}\|_{L^2(Q_T,\R^{3\times 3})}^2 
         &=& \int_{Q_T}|\nabla u^{\eps}|^2\,dx\,dt + \int_{Q_T} \left|\frac{\partial u^{\eps}}{\partial t}\right|^2 \,dx\,dt\\
         &\leq&  \frac{1}{c_1}\int_0^T \int_{\mathcal{D}} a(T_{x/\eps}\widetilde{\om})\nabla u^{\eps}(t,x)\cdot \nabla u^{\eps}(t,x)\,dx\,dt\\&&+c_2 \int_{\mathcal{D}}  |\nabla u_0|^2(x) \,dx\\
         &\leq&  \left(\frac{2T}{c_1}+1\right)c_2 \int_{\mathcal{D}}  |\nabla u_0|^2(x) \,dx\, .
\end{eqnarray*}

     As a consequence, according to Proposition~\ref{propH1} and Rellich Theorem, there exist $\bar{u}$ in $H^1(Q_T,\R^3)$, $\xi$ in $L^2(Q_T,L^2_{\text{pot}}(\Om,\R^3))$ such that, up to the extraction of a subsequence,
\begin{equation*}
     \left\{
     \begin{aligned}
     & u^{\eps} \overset{2}{\rightharpoonup} \bar{u} \, \text{ in } L^2(Q_T\times \Omega, \R^3)\, , \\
     & \forall i \in \{ 1,2,3\},\;\partial_i u^{\eps} \overset{2}{\rightharpoonup} \partial_i \bar{u} + \xi_i \,\text{ in } L^2(Q_T\times \Omega, \R^3)\, , \\
     & \frac{\partial u^{\eps}}{\partial t} \overset{2}{\rightharpoonup} \frac{\partial \bar{u}}{\partial t} \, \text{ in } L^2(Q_T\times \Omega,\R^3)\, , \\
    & u^{\eps}\rightarrow \bar{u} \, \text{ in } {L^2(Q_T,\R^3)} \, ,\\
     & u^{\eps} \rightarrow  \bar{u} \text{ weakly in } {H^1(Q_T,\R^3)}\, .
     \end{aligned}\right.
\end{equation*}
Moreover, we may also assume that $u^\eps$ converges to $\bar{u}$ a.e. in $Q_T$ and,
since $|u^\eps(t,x)|=1$ for a.e. $(t,x)\in Q_T$, we deduce that $\bar{u}\in H^1(Q_T,\S^2)$. We have established that $\bar{u}$ verifies (D1).
     
For every $\eps>0$, $u^{\eps}(0,x)=u_0(x)$ in the trace sense, hence almost everywhere on $\mathcal{D}$ . By continuity of the trace in $L^2(\mathcal{D})$ with respect to weak convergence in $H^1(Q_T)$, we have
\begin{equation*}
     \bar{u}(0,\cdot) = \lim_{\eps\to 0} u^{\eps}(0,\cdot) = u_0\quad\text{in }L^2(\mathcal{D})\,.
\end{equation*}
Therefore $\bar{u}$ complies to (D3).

     Testing  the variational formulation (D2) against $\phi(t,x) = \eps \psi(t,x)v(T_{x/\eps}\widetilde{\om})$, where $\psi\in C_c^{\oo}(Q_T)$ and $v\in C^1(\Om,\R^3)$, it holds that
     \begin{multline*}
       \eps \int_{Q_T} \left(\frac{\partial u^{\eps}}{\partial t}(t,x) + \lambda   u^{\eps}(t,x) \times \frac{\partial u^{\eps}}{\partial t}(t,x)\right)\cdot  v(T_{x/\eps}\widetilde{\om})\psi(t,x) \,dx\,dt  \\ 
    =(1+\lambda^2)\left( \eps\sum_{i,j=1}^3\int_{Q_T} a_{i,j}(T_{x/\eps}\widetilde{\om}) (\partial_j u^{\eps} \times u^{\eps})\cdot  v(T_{x/\eps}\widetilde{\om})\partial_i \psi(t,x)\,dx\,dt\right. \\ 
 \left. +\sum_{i,j=1}^3\int_{Q_T} a_{i,j}(T_{x/\eps}\widetilde{\om}) (\partial_j u^{\eps} \times u^{\eps})\cdot  D_i v(T_{x/\eps}\widetilde{\om})\psi(t,x)\,dx\,dt\right) .
         \end{multline*}

      We recall that the sequence $(u^{\eps})_{\eps>0}$ strongly two-scale converges to $\bar{u}$ in $L^2$, and for every $j$ in $\{1,2,3\}$ the sequences $(\partial_j u^{\eps})_{\eps>0}$ and $(\frac{\partial u^{\eps}}{\partial t})_{\eps>0}$ weakly two-scale converge to  $\partial_j\bar{u} +\xi_j$ and $\frac{\partial \bar{u}}{\partial t}$ respectively. According to Proposition~\ref{cv2echforte}, and  as $a$ is in $C(\Om,\R^{3\times 3})$,  we are allowed to pass to the limit in the above equation, which yields\
\begin{align*}
\sum_{i,j=1}^3\int_{Q_T} \int_{\Om}a_{i,j}(\om) ((\partial_j \bar{u}(t,x)+\xi_j(t,x,\om)) \times \bar{u}(t,x))\cdot D_i v(\om) \,\psi(t,x) \,d\mu(\om) \,dx\,dt&\\ \MoveEqLeft[5]=0 \, .
\end{align*}
It follows that, almost everywhere in $Q_T$, and for any $v \in C^1(\Omega, R^3)$,
\begin{equation*}
     \sum_{i,j=1}^3\int_{\Om}a_{i,j}(\om) ((\partial_j \bar{u}(t,x)+\xi_j(t,x,\om)) \times \bar{u}(t,x) )\cdot D_i v(\om)\,d\mu(\om) = 0\, ,
\end{equation*}
that we rewrite as 
\begin{equation}
     \sum_{i,j=1}^3\int_{\Om}a_{i,j}(\om) (\partial_j\bar{u}(t,x)+\xi_j(t,x,\om))  \cdot D_i (v\times \bar{u}(t,x))\,d\mu(\om) = 0 \,,\label{perp0}
\end{equation}
by invariance of the triple product under circular permutation.   \\

Moreover, almost everywhere in $Q_T$, $|u^{\eps}(t,x)|=1$, therefore, for every $i$ in $\{1,2,3\}$, $\varphi$ in $C_c^{\oo}(Q_T)$, and $ \psi$ in $C^1(\Om)$, 
\begin{equation*}
     \int_{Q_T} u^{\eps}(t,x)\cdot \partial_i u^{\eps}(t,x) \varphi(x) \psi(T_{x/\eps} \widetilde{\om})\,dx\,dt = 0 \, .
\end{equation*}
      As the sequence $(u^{\epsilon})_{\epsilon>0}$ strongly two-scale converges to $\bar{u}$ and $(\partial_i u^{\epsilon})_{\epsilon}$ weakly two-scale converges to $\partial_i \bar{u}+\xi_i$, applying Proposition~\ref{cv2echforte} again, we can pass to the limit in the above equation to get
\begin{equation*}
 \int_{Q_T} \int_{\Om}\bar{u}(t,x)\cdot (\partial_i \bar{u}(t,x) +\xi_i(t,x,\om))\varphi(t,x) \psi(\om)d\mu(\om)dxdt  = 0 \, .
\end{equation*}
Consequently, almost everywhere in $Q_T$ and $\mu$-almost surely,
\begin{equation}
     \bar{u}(t,x)\cdot (\partial_i \bar{u}(t,x)+\xi_i(t,x,\om))=0 \, .\label{perp02}
\end{equation}
Decomposing the test function $v$ in~\eqref{perp0} along and orthogonally to $\bar{u}$, and using the same arguments as in the proof of Theorem~\ref{thmnotrepb}, we deduce from~\eqref{perp0},~\eqref{perp02} that $a(\cdot)(\nabla \bar{u}(t,x) + \xi(t,x,\cdot))$ is  in 
$L^2_{\text{sol}}(\Om,\R^{3\times3})$. Thus, as $\xi(t,x,\cdot)$ is in $L^2_{\text{pot}}(\Om,\R^{3\times3})$, $\xi(t,x,\cdot)$ is the unique solution to the problem~\eqref{xideff}, with $\nu=\nabla \bar{u}(t,x)$, and hence
     
\begin{equation}
       \int_{\Om} a(\om)(\nabla \bar{u}(t,x) + \xi(t,x,\om))\,d\mu(\om)=a^{\text{eff}}\nabla \bar{u}(t,x)\, .
\label{aeffLLG}       
\end{equation}

    Let us now show that $\bar{u}$ verifies the variational formulation  of the homogenized problem~\eqref{landaulifhomog}. For every $\eps>0$ and $\phi$ in $C^\oo_c(Q_T,\R^3)$, $u^{\eps}$ being a weak solution of~\eqref{landaulif} satisfies 
\begin{multline*}
        \int_{Q_T}\left( \frac{\partial u^{\eps}}{\partial t}(t,x)  + \lambda  u^{\eps}(t,x) \times \frac{\partial u^{\eps}}{\partial t}(t,x)\right)\cdot \phi(t,x)\,dx\,dt   \\= (1+\lambda^2) \sum_{i,j=1}^3 \int_{Q_T} a_{i,j}(T_{x/\eps}\widetilde{\om}) \left(\partial_j u^{\eps}(t,x)\times u^{\eps}(t,x)\right)\cdot\partial_i \phi (t,x)\,dt\,dx  \, .
\end{multline*}
     As seen above, for every $j$ in $\{1,2,3\}$, $\partial_j u^{\eps}\times u^{\eps}$,$\frac{\partial u^{\eps}}{\partial t}$   and  $u^{\eps}\times\frac{\partial u^{\eps}}{\partial t}$ weakly two-scale converge in $L^2(Q_T\times \Om, \R^3)$, respectively to $(\partial_j \bar{u}+\xi_j)\times \bar{u}$,$\frac{\partial \bar{u}}{\partial t}$ and $\bar{u}\times\frac{\partial \bar{u}}{\partial t}$. The matrix $a$ being in $C(\Om,\R^{3\times 3})$, we may pass to the two-scale limit in the above equation, which yields
\begin{eqnarray*}
      &&\int_{Q_T} \left(\frac{\partial \bar{u}}{\partial t} + \lambda \bar{u}\times \frac{\partial \bar{u}}{\partial t}\right)\cdot \phi \,dt\,dx \\
      &&\hspace*{2cm}=(1+\lambda^2) \sum_{i,j=1}^3 \int_{Q_T} \left(\left(\int_{\Om}a_{i,j} (\partial_j \bar{u}+ \xi_j)\,d\mu\right)\times \bar{u}\right)\cdot \partial_i \phi \,dt\,dx\\
      &&\hspace*{2cm}=(1+\lambda^2)    \sum_{i,j=1}^3 a^{\text{eff}}_{i,j}\int_{Q_T}  \left(\partial_j\bar{u}\times \bar{u}\right)\cdot\partial_i \phi  \, dt\,dx,
\end{eqnarray*}
from~\eqref{aeffLLG}. We conclude that $\bar{u}$ solves the variational formulation of the homogenized problem (D2), with $a_{i,j}$ replaced by $a^\text{eff}_{ij}$.\\

In order to finish the proof, it remains to show that $\bar{u}$ satisfies the energy dissipation inequality (D4), again with $a$ replaced by $a^\text{eff}$.
Let $t$ in $(0,T)$ be fixed. As $u^{\eps}$ weakly converges to $\bar{u}$ in $H^1(Q_T,\R^3)$, by continuity of the trace, it follows that $u^{\eps}(t)$ weakly converges to $\bar{u}(t)$ in $L^2(\mathcal{D},\R^3)$. Besides, as $u^{\eps}$ verifies the inequality (D4) of dissipation of the energy, and since $|u^\epsilon(t,x)|=1$ for a.e. $(t,x) \in Q_T$ , 
it holds that, for every $\eps>0$,
\begin{align*}
    \|u^{\eps}(t,\cdot)\|_{H^1(\mathcal{D},\R^3)}^2 & = \int_{\mathcal{D}} |u^{\eps}(t,x)|^2\,dx + \int_{\mathcal{D}} |\nabla u^{\eps}(t,x)|^2\,dx \\
         &\leq |\mathcal{D}| + \frac{2c_2}{c_1} \int_{\mathcal{D}}  |\nabla u_0|^2(x) \,dx\, 
\end{align*}
by the uniform ellipticity of $a$. Therefore, according to Theorem~\ref{coro}, there exist $v$ in $H^1(\mathcal{D},\R^3)$ and $v_1$ in $L^2(\mathcal{D}, L^2_{\text{pot}}(\Om,\R^{3\times 3}))$ such that
\begin{equation*}
    \left\{
        \begin{aligned}
        u^{\eps}(t) &{\longrightarrow} v \text{ weakly in } H^1(\mathcal{D})\, ,\\
        \nabla u^{\eps}(t) &\overset{2}{\rightharpoonup} \nabla v + v_1 \, \text{ in } L^2(\mathcal{D}\times \Om)\, .
        \end{aligned}
    \right.
\end{equation*}
By uniqueness of the weak limit in $L^2(\mathcal{D},\R^3)$, $v=\bar{u}(t,\cdot)$. Let $(\tilde{v}^n_1)_{n\in \mathbb{N}}$ be a sequence of elements in the linear span of test functions in $C_c^{\oo}(\mathcal{D})\times C(\Om,\R ^{3\times 3})$ such that $\tilde{v}^n_1 \rightarrow v_1$ in $L^2(\mathcal{D}\times\Om,\R^{3\times 3})$ as $n$ tends to infinity. For every $\eps>0$, $t$ in $(0,T)$, by the positivity of $a$,
\begin{align*}
\int_{\mathcal{D}} a(T_{x/\eps}\widetilde{\om})\left(\nabla u^{\eps}(t,x)-\nabla \bar{u}(t,x)- \tilde{v}^n_1(x,T_{x/\eps}\widetilde{\om})\right)&\\ \MoveEqLeft[7]\cdot\left( \nabla u^{\eps}(t,x)-\nabla \bar{u}(t,x) -\tilde{v}^n_1(x,T_{x/\eps}\widetilde{\om})\right)\,dx\geq 0\,.
\end{align*}
Expanding this equation gives
\begin{align*}
\int_{\mathcal{D}} a(T_{x/\eps}\widetilde{\om})\nabla u^{\eps}(t,x)\cdot \nabla u^{\eps}(t,x)\,dx& \\\MoveEqLeft[13]\geq 2\int_{\mathcal{D}} a(T_{x/\eps}\widetilde{\om})\nabla u^{\eps}(t,x)\cdot(\nabla \bar{u}(t,x)+\tilde{v}^n_1(x,T_{x/\eps}\widetilde{\om}))\,dx \\
 \MoveEqLeft[12]-\int_{\mathcal{D}} a(T_{x/\eps}\widetilde{\om})(\nabla \bar{u}(t,x) +\tilde{v}^n_1(x,T_{x/\eps}\widetilde{\om}))\cdot (\nabla \bar{u}(t,x) +\tilde{v}^n_1(x,T_{x/\eps}\widetilde{\om}))\,dx \, .
\end{align*}
    For every $n$ in $\N$, $\widetilde{v}_1^{n}$ is in the linear span of test functions, therefore we are allowed to pass to the two-scale limit in the right-hand side term. Thus, as we pass to the inferior limit in the above inequality as $\eps$ goes to zero, it holds that, 
\begin{align*}
\liminf_{\eps\downarrow0}\int_{\mathcal{D}} a(T_{x/\eps}\widetilde{\om})\nabla u^{\eps}(t,x)\cdot \nabla u^{\eps}(t,x)\,dx& \\\MoveEqLeft[15]\geq 2\int_{\mathcal{D}}\int_{\Om}a(\om)(\nabla \bar{u}(t)+v_1(x,\om))\cdot(\nabla\bar{u}(t) +\tilde{v}^n_1(x,\om))\,d\mu(\om) \,dx\\
\MoveEqLeft[13]
 -\int_{\mathcal{D}}\int_{\Om} a(\om)(\nabla \bar{u}(t) +\tilde{v}^n_1(x,\om))\cdot (\nabla \bar{u}(t) +\tilde{v}^n_1(x,\om))\,d\mu(\om) \,dx\, .
\end{align*}
Passing now to the limit as $n$ tends to infinity, we obtain
\begin{align*}
\liminf_{\eps\downarrow0}\int_{\mathcal{D}} a(T_{x/\eps}\widetilde{\om})\nabla u^{\eps}(t,x)\cdot \nabla u^{\eps}(t,x)\,dx &\\
\MoveEqLeft[15]\geq \int_{\mathcal{D}} \int_{\Om}a(\om)(\nabla \bar{u}(t,x)+v_1(x,\om))\cdot(\nabla\bar{u}(t,x)+v_1(x,\om))\,d\mu(\om)\,dx
&\\
\MoveEqLeft[15]\geq \int_{\mathcal{D}}a^{\text{eff}}\nabla \bar{u}(t)\cdot\nabla \bar{u}(t) \,dx \, .
\end{align*}
As a consequence, for every $t$ in $(0,T)$, as $(u^{\eps})_{\eps>0}$ weakly converges to $\bar{u}$ in $H^1((0,t)\times\mathcal{D},\R^3)$, we are allowed to pass to the limit in (D4),
using the lower semi-continuity of the weak limit in $L^2(Q_t)$:
\begin{align*}
\frac{1}{2}\int_{\mathcal{D}}a^{\text{eff}}\nabla \bar{u}(t,x)\cdot\nabla \bar{u}(t,x)\,dx +\frac{\lambda}{1+\lambda^2} \int_0^t\int_{\mathcal{D}}\left|\frac{\partial \bar{u}}{\partial t}\right|^2(s,x)\,dx\,ds &\\\MoveEqLeft[5]\leq c_2 \int_{\mathcal{D}}  |\nabla u_0|^2(x) \,dx\, .
\end{align*}
Finally, $\bar{u}$ verifying (D1)--(D4) is a weak solution of~\eqref{landaulifhomog}.\end{proof}

\begin{rem}
In~\cite{alouges1992global}, the existence  of weak solutions of~\eqref{landaulif} has been proven with an even more precise bound on the dissipation of the energy given by the energy at time 0. Namely,
\begin{equation*}
     \mathcal{E}_\eps = \int_{\mathcal{D}} a(T_{x/\eps}\widetilde{\om}) \nabla u_0(x)\cdot \nabla u_0(x) \,dx .
\end{equation*}
Notice that, on the one hand, $(\mathcal{E}_\eps)_{\eps>0}$ is uniformly bounded, since by the uniform boundedness of $a$, one has
\begin{equation*}
     \mathcal{E}_\eps \leq c_2 \int_{\mathcal{D}}  |\nabla u_0|^2(x) \,dx\, .
\end{equation*}
On the other hand, $(\mathcal{E}_\eps)_{\epsilon}$ converges due to Birkhov ergodic theorem
\begin{equation*}
    \lim_{\eps\downarrow 0} \mathcal{E}_\eps =\int_{\mathcal{D}}\E{(a)}\nabla u_0(x)\cdot  \nabla u_0(x)\,dx \, ,
\end{equation*}
which is \emph{not} the homogenized energy at time 0. Indeed,
\begin{align*}
\int_{\mathcal{D}} a^{\text{eff}}\nabla u_0(x)\cdot\nabla u_0(x)\,dx &\\
\MoveEqLeft[10]= \int_{\mathcal{D}}\inf_{v\in L^2_{\text{pot}}(\Om,\R^3)} \int_{\Om}a(\om)(\nabla u_0(x) + v(\om))\cdot(\nabla u_0(x) +v(\om))\,d\mu(\om)\,dx\\
\MoveEqLeft[10]\leq\int_{\mathcal{D}}\E({a}) \nabla u_0(x)\cdot \nabla u_0(x)\,dx\,,
\end{align*}
the last inequality being strict in general.
\end{rem}

\section*{Conclusion}
In this paper, we have presented a simple version of the stochastic two-scale convergence that was proposed in~\cite{zhikov2006homogenization}. The technique provides tools that are very similar to the ones used in periodic homogenization, and mostly
extends the techniques to the stochastic setting. We have proposed two application examples of the method in non-linear problems, one concerning the homogenization of the harmonic maps equation while the other deals with the homogenization of the Landau-Lifshitz-Gilbert equation, which is currently used for the modelization of ferromagnetic materials that are inhomogeneous at the small scale.

We emphasize the fact that both equations share common mathematical difficulties such as the non-linear behavior, a non convex constraint on the (vectorial) unknown and non uniqueness of the weak solutions. Nevertheless, we have shown that the stochastic two-scale convergence method may be applied to those equations, permitting to identify the limiting homogenized equation.

We are of course aware of the fact that, for ferromagnetic materials, the full model contains several other terms that need to be considered (only the so-called exchange term has been considered so far) and an extension to more complete models is currently under study. With the frame we defined in this article, we have all the tools we need to handle these generalizations.

The next step of this approach is to make numerical experiments. Yet, we notice that the expression of the effective matrix $a^{\text{eff}}$ is not explicit: we need to compute it numerically. The approximation of $a^{\text{eff}}$ have been extensively studied in the recent years, in particular by Gloria and Otto (see for instance\cite{gloria2015corrector}) and Armstrong, Kuusi and Mourrat (see \cite{armstrong2017quantitative}). Combined with suitable numerical experiments, we believe that the approach might be able to fully identify and study relevant models for composite ferromagnetic materials such as spring magnets or nanocristalline ferromagnets.


\begin{thebibliography}{10}
\providecommand{\url}[1]{{#1}}
\providecommand{\urlprefix}{URL }
\expandafter\ifx\csname urlstyle\endcsname\relax
  \providecommand{\doi}[1]{DOI~\discretionary{}{}{}#1}\else
  \providecommand{\doi}{DOI~\discretionary{}{}{}\begingroup
  \urlstyle{rm}\Url}\fi

\bibitem{allaire1992homogenization}
Allaire, G.: Homogenization and two-scale convergence.
\newblock SIAM Journal on Mathematical Analysis \textbf{23}(6), 1482--1518
  (1992)

\bibitem{alouges2015homogenization}
Alouges, F., Di~Fratta, G.: Homogenization of composite ferromagnetic
  materials.
\newblock Proc. R. Soc. A \textbf{471}(2182), 20150365 (2015)

\bibitem{alouges1992global}
Alouges, F., Soyeur, A.: On global weak solutions for Landau-Lifshitz
  equations: existence and nonuniqueness.
\newblock Nonlinear Analysis: Theory, Methods \& Applications \textbf{18}(11),
  1071--1084 (1992)

\bibitem{armstrong2017quantitative}
Armstrong, S., Kuusi, T., Mourrat, J.C.: Quantitative stochastic homogenization
  and large-scale regularity.
\newblock arXiv preprint arXiv:1705.05300  (2017)

\bibitem{birkhoff1931proof}
Birkhoff, G.D.: Proof of the ergodic theorem.
\newblock Proceedings of the National Academy of Sciences \textbf{17}(12),
  656--660 (1931)

\bibitem{bourgeat1994stochastic}
Bourgeat, A., Mikelic, A., Wright, S.: Stochastic two-scale convergence in the
  mean and applications.
\newblock J. reine angew. Math \textbf{456}(1), 19--51 (1994)

\bibitem{chen1989weak}
Chen, Y.: The weak solutions to the evolution problems of harmonic maps.
\newblock Mathematische Zeitschrift \textbf{201}(1), 69--74 (1989)

\bibitem{inproceedings}
Collocott, S., ~, D., Gwan, P., Kalan, B., Lovatt, H., Wu, W., Watterson, P.:
  Applications of rare-earth permanent magnets in electrical machines: from
  motors for niche applications to hybrid electric vehicles.\newblock China Magnet Symposium, Xi" an, China (2004)

\bibitem{dal1985nonlinear}
Dal~Maso, G., Modica, L.: Nonlinear stochastic homogenization and ergodic
  theory.
\newblock Universit{\`a} di Pisa. Dipartimento di Matematica (1985)

\bibitem{fullerton1999hard}
Fullerton, E.E., Jiang, J.S., Bader, S.: Hard/soft magnetic heterostructures:
  model exchange-spring magnets.
\newblock Journal of Magnetism and Magnetic Materials \textbf{200}(1), 392--404
  (1999)

\bibitem{gloria2015corrector}
Gloria, A., Otto, F.: The corrector in stochastic homogenization: optimal
  rates, stochastic integrability, and fluctuations.
\newblock arXiv preprint arXiv:1510.08290  (2015)

\bibitem{gutfleisch2011magnetic}
Gutfleisch, O., Willard, M.A., Br{\"u}ck, E., Chen, C.H., Sankar, S., Liu,
  J.P.: Magnetic materials and devices for the 21st century: stronger, lighter,
  and more energy efficient.
\newblock Advanced materials \textbf{23}(7), 821--842 (2011)

\bibitem{helein2008harmonic}
H\'elein, F., Wood, J.C.: Harmonic maps: {D}edicated to the memory of {J}ames
  {E}ells.
\newblock In: D.~Krupka, D.~Saunders (eds.) Handbook of Global Analysis, pp.
  417--491. Elsevier, Amsterdam (2008)

\bibitem{hubert2008magnetic}
Hubert, A., Sch{\"a}fer, R.: Magnetic Domains: The Analysis of Magnetic
  Microstructures.
\newblock Springer Berlin Heidelberg (2008).

\bibitem{nguetseng1989general}
Nguetseng, G.: A general convergence result for a functional related to the
  theory of homogenization.
\newblock SIAM Journal on Mathematical Analysis \textbf{20}(3), 608--623 (1989)

\bibitem{zbMATH03787778}
{Papanicolaou}, G.C., {Varadhan}, S.R.S.: {Boundary value problems with rapidly
  oscillating random coefficients.}
\newblock {Random fields. Rigorous results in statistical mechanics and quantum
  field theory, Esztergom 1979, Colloq. Math. Soc. Janos Bolyai 27, 835-873
  (1981).} (1981)

\bibitem{rudin1964principles}
Rudin, W., et~al.: Principles of mathematical analysis, vol.~3.
\newblock McGraw-hill New York (1964)

\bibitem{visintin1985landau}
Visintin, A.: On Landau-Lifshitz equations for ferromagnetism.
\newblock Japan J. Appl. Math. \textbf{2}, 69--84 (1985)

\bibitem{zhikov2006homogenization}
Zhikov, V.V., Piatnitski, A.: Homogenization of random singular structures and
  random measures.
\newblock Izvestiya: Mathematics \textbf{70}(1), 19--67 (2006)

\end{thebibliography}
\end{document}